\pgfplotsset{width=7cm,compat=1.8}
\newtheorem{theorem}{Theorem}[section]
\newtheorem*{theorem*}{Theorem}
\newtheorem{definition}[theorem]{Definition}
\theoremstyle{plain}
\newtheorem{corollary}[theorem]{Corollary}
\newtheorem*{corollary*}{Corollary}
\newtheorem{lemma}[theorem]{Lemma}
\newtheorem{proposition}[theorem]{Proposition}
\newcounter{mt}
\newtheorem{MainTheorem}[mt]{Theorem}
\newcounter{pt}
\newtheorem{Problem}[pt]{Problem}
\theoremstyle{definition}
\newtheorem{example}[theorem]{Example}
\newtheorem{remark}[theorem]{Remark}
\newcommand{\tpitchfork}{%
  \vbox{
    \baselineskip\z@skip
    \lineskip-.52ex
    \lineskiplimit\maxdimen
    \m@th
    \ialign{##\crcr\hidewidth\smash{$-$}\hidewidth\crcr$\pitchfork$\crcr}
  }%
}
\newcommand{\CC}{\mathbb{C}}
\newcommand{\NN}{\mathbb{N}}
\newcommand{\RR}{\mathbb{R}}
\DeclareMathOperator{\OO}{O}
\DeclareMathOperator{\UU}{U}
\DeclareMathOperator{\interior}{int}
\DeclareMathOperator{\cl}{cl}
\DeclareMathOperator{\image}{im}
\newcommand{\DD}{\mathbb{D}}
\newcommand{\JOp}[2]{\mathrm{J}_{#1,#2}}
\newcommand{\CIntBody}[2]{\mathrm{I}_{#1,#2}}
\newcommand{\IntBody}{\mathrm{I}}
\newcommand{\convexbodies}{\mathcal{K}}
\newcommand{\starbodiesO}{\mathcal{S}_0}
\newcommand{\unitsurf}[1]{\mathbb{S}^{#1}}
\newcommand{\TT}[1]{\mathrm{T}_{#1}}
\newcommand{\CompProj}{\Pi}
\newcommand{\MixedVol}[1]{\mathrm{V}(#1)}
\newcommand{\Vol}[2]{\mathrm{V}_{#1}(#2)}
\newcommand{\voln}{\mathrm{V}}
\newcommand{\Fourier}{\mathrm{F}}
\newcommand{\DualMixedVol}[2]{\tilde{\mathrm{V}}_{#1}(#2)}
\newcommand{\GAMMA}[1]{\Gamma\left(#1\right)}
\newcommand{\sphericalFourier}[1]{{{\mathbf{F}}}_{#1}}
\newcommand{\innerProduct}[1]{\left\langle #1 \right\rangle}
\newcommand{\surfArea}[1]{S_{#1}}
\newcommand{\imgSetPhi}[2]{#1({#2})}
\title[]{Busemann--Petty type problems\\ on complex vector spaces}
\author{Simon Ellmeyer}
\address{Institute f. Discrete Mathematics and Geometry, TU Wien, 1040 Vienna, Austria}
\email{simon.ellmeyer@tuwien.ac.at}
\author{Georg C. Hofst\"atter}
\address{Institute f. Discrete Mathematics and Geometry, TU Wien, 1040 Vienna, Austria}
\email{georg.hofstaetter@tuwien.ac.at}
\thanks{{\it MSC classification}:
	52A38, 
	51M15, 
	52A40. 
	\\\indent GH was supported by the Austrian Science Fund (FWF), \href{https://doi.org/10.55776/P34446}{doi:10.55776/P34446}.
}
\date{\today}
\begin{document}

\begin{abstract}
Busemann--Petty type problems for the recently introduced complex projection, centroid and $L_p$-intersection body operators are examined. Moreover, it is shown that, as their real counterparts, they can be linked to questions about the spherical Fourier transform.
\end{abstract}

\maketitle

\section{Introduction}
The \emph{Busemann--Petty problem} \cite{Busemann1956} was one of the most famous problems in convex geometry of the twentieth century: Suppose that $K, L \in \convexbodies(\RR^n)$, the space of convex bodies (compact and convex sets in $\RR^n$), are origin-symmetric with non-empty interiors, and assume that
\begin{align}\label{eq:BPproblem}
	\voln_{n-1}(K\cap u^\perp)\leq \voln_{n-1}(L\cap u^\perp), \quad \forall u \in \unitsurf{n-1},
\end{align}
where $\voln_{i}$ denotes the $i$-dimensional volume, $\unitsurf{n-1}$ the unit sphere and $u^\perp$ the hyperplane perpendicular to $u$. Does this imply that $\voln_{n}(K) \leq \voln_n(L)$? The answer is affirmative, if $n \leq 4$, and negative, if $n \geq 5$. Many authors contributed to this solution and worked on related problems over a period of $50$ years (see \cites{Ball1988,Bourgain1991,Giannopoulos1990,Gardner1994b,Gardner1994c, Gardner1999,Koldobsky1997, Larman1975,Lutwak1988,Papadimitrakis1992,Zhang1994,Zhang1994b,Zhang1999, Koldobsky2011b, Koldobsky2000, Koldobsky2003, Rubin2010, Rubin2009, Rubin2004, Yaskin2006c, Yaskin2006b,Zvavitch2005}). A unified proof was given by Gardner, Koldobsky and Schlumprecht in \cite{Gardner1999}.

The dual question to the Busemann--Petty problem is known as \emph{Shephard's problem}~\cite{Shephard1964}:  Suppose that $K, L \in \convexbodies(\RR^n)$ are origin-symmetric with non-empty interiors, and assume that
\begin{align}\label{eq:Shproblem}
	\voln_{n-1}(K\vert u^\perp)\leq \voln_{n-1}(L\vert u^\perp), \quad \forall u \in \unitsurf{n-1},
\end{align}
where $K \vert u^\perp$ denotes the orthogonal projection of $K$ onto $u^\perp$. Does this imply that $\voln_{n}(K) \leq \voln_n(L)$? Here, the complete answer was given independently by Petty~\cite{Petty1967} and Schneider~\cite{Schneider1967}. It is affirmative, if $n \leq 2$, and negative, if $n \geq 3$.

The key to the solution of both problems was to link them to certain geometric operators, namely to the \emph{intersection body} operator for the Busemann--Petty problem (as was first noticed by Lutwak~\cite{Lutwak1988}), and to the \emph{projection body} operator for Shephard's problem. Indeed, the inequality for the volume holds, that is, the answer is affirmative, if, in \eqref{eq:BPproblem}, $K$ is an intersection body, or if, in \eqref{eq:Shproblem}, $L$ is a projection body. Moreover, if $K\in \convexbodies(\RR^n)$ is origin-symmetric, sufficiently regular and \emph{not} a projection body, then there exists an origin-symmetric $L \in \convexbodies(\RR^n)$ such that \eqref{eq:Shproblem} holds, but the inequality for the volume is reversed (\cite{Schneider1967}). A similar statement holds for intersection bodies and \eqref{eq:BPproblem} (\cite{Lutwak1988}). As a consequence, both problems were solved by an analysis of the classes of intersection resp.\ projection bodies.

Interestingly, both problems can also be stated in terms of intersection resp. projection bodies. To see this, denote by $\starbodiesO(\RR^n)$ the set of all star bodies, that is, subsets $K$ of $\RR^n$ that are star-shaped with respect to the origin and have a continuous positive radial function $\rho_K(u)=\sup\{\lambda \geq 0:\lambda u \in K\}$, $u\in \unitsurf{n-1}$. For $K \in \starbodiesO(\RR^n)$, its intersection body $\IntBody K \in \starbodiesO(\RR^n)$ is defined by (\cite{Lutwak1988})
\begin{align*}
	\rho_{\IntBody K}(u)=\Vol{n-1}{K\cap u^\perp}, \quad u \in \unitsurf{n-1}.
\end{align*}
The projection body $\Pi K$ of a convex body $K \in \convexbodies(\RR^n)$ is defined by (see, e.g., \cite{Gardner2006})
\begin{align*}
	h_{\Pi K}(u)=\Vol{n-1}{K\vert u^\perp}, \quad u \in \unitsurf{n-1},
\end{align*}
where $h_K(u)=\sup\{\innerProduct{x,u}:x \in K\}$, $u \in \unitsurf{n-1}$, is the support function of $K$, and $\langle\cdot,\cdot\rangle$ is the Euclidean inner product.

In this way, the Busemann--Petty problem and Shephard's problem are special cases of the following more general class of Busemann--Petty type problems which we vaguely formulate for geometric operators on (subsets of) convex bodies with non-empty interiors and make more precise later.
\begin{Problem}\label{prob:BPgeneralPhi}
Let $\Phi$ be a geometric operator and let $K,L \in \convexbodies(\Phi)$.
	\begin{align*}
		\text{Does} \quad \quad \Phi K \subseteq \Phi L \quad \Longrightarrow \quad \Vol{n}{K}\leq\Vol{n}{L} \quad \quad \text{hold}?
	\end{align*}
\end{Problem}
Here, we denote by $\convexbodies(\Phi) \subset \convexbodies(\RR^n)$ a subset of convex bodies with non-empty interior, for which we will consider Problem~\ref{prob:BPgeneralPhi}. Usually, $\convexbodies(\Phi)$ will be the injectivity set of the operator $\Phi$, which turns out to be the largest subset for which Problem~\ref{prob:BPgeneralPhi} is non-trivial. Problem~\ref{prob:BPgeneralPhi} has been considered widely, e.g., for centroid bodies \cite{Lutwak1990}, for $L_p$-intersection bodies \cite{Yaskin2006}, for Minkowski valuations compatible with rotations \cite{Schuster2008}, for complex intersection bodies \cite{Koldobsky2008}, replacing real by complex hyperplanes in \eqref{eq:BPproblem} and origin-symmetry by $\unitsurf{1}$-invariance (that is, invariance under multiplication by $c \in \unitsurf{1}\subset \CC$), as well as in a functional setting \cite{Koldobsky2023}.

\bigskip

In this work, we consider Problem~\ref{prob:BPgeneralPhi} for the recently defined analogues of projection, centroid and $L_p$-intersection body maps for complex vector spaces.

The \emph{complex projection body} map $\Pi_C$ was discovered in \cite{Abardia2011} and can be defined for every $C \in \convexbodies(\CC)$ by
\begin{align}\label{eq:defCProjBody}
	h_{\Pi_{C}K}(u)=\frac12 \int_{\unitsurf{2n-1}} h_{Cu}(v) d\surfArea{K}(v), \quad K \in \convexbodies(\CC^n), u \in \unitsurf{2n-1},
\end{align}
where $Cu=\{cu:c \in C\}\subset \CC^n$ and $\surfArea{K}$ denotes the surface area measure of $K$ (see, e.g., \cite{Schneider2014}*{Sec.~4.2}). Motivated by this construction, a \emph{complex centroid body} map was introduced in \cite{Haberl2019} and \emph{complex $L_p$-intersection body} maps in \cite{Ellmeyer2023} by
\begin{align}\label{eq:defLpCIntBody}
	\rho_{\IntBody_{C,p} K}(u)^{-p}=\int_K h_{Cu}(x)^p dx, \quad K \in \starbodiesO(\CC^n), u \in \unitsurf{2n-1},
\end{align}
where $C \in \convexbodies(\CC)$ contains the origin in its relative interior and $-\dim C < p \leq 1$ is non-zero. Let us point out that for $p=1$, \eqref{eq:defLpCIntBody} yields (up to normalization) the polar of the complex centroid body from \cite{Haberl2019}. Moreover, the real operators can be recovered by letting $C = [-1,1] \subset \CC$ and $p \to -1^+$ (up to suitable renormalizations).

In the following, we consider the operators $\Pi_C$ and $\IntBody_{C,p}$ only for origin-symmetric $C \in \convexbodies(\CC)$ containing the origin in its interior. This will be a general condition in all statements below, together with the assumption that $-2<p\leq 1$ is non-zero.

As in the real setting, $\Pi_C$ and $\IntBody_{C,p}$ behave in a dual way and the roles of the bodies $K$ and $L$ in Problem~\ref{prob:BPgeneralPhi} are interchanged. In order to state the results in a unified way, we will therefore "dualize" the problem for $\Pi_C$ by considering its polar operator $\Pi_C^\circ$, $\Pi_C^\circ K = (\Pi_C K)^\circ$, where $K^\circ = \{y \in \RR^m: \langle y, x \rangle \leq 1, \forall x \in K\}$ denotes the polar body of $K \in \convexbodies(\RR^m)$ containing the origin in its interior. As polarity is order-reversing, this interchanges $K$ and $L$ in Problem~\ref{prob:BPgeneralPhi}, but has no further effects.

Moreover, note that for $p > 0$, the volume inequality that we expect in Problem~\ref{prob:BPgeneralPhi} is reversed. We will take care of this fact by multiplying both sides with $-p<0$ and set $p=1$ when $\Phi = \Pi_C^\circ$. Let $\DD\subset \CC$ denote the unit disc. Our main result states the following.
\begin{MainTheorem}\label{mthm:stateComplBP}
	Let $\Phi \in \{\Pi_{C}^\circ,\CIntBody{C}{p}\}$. If $p<0$, then the answer to Problem~\ref{prob:BPgeneralPhi} is affirmative for $n=2$ and $\Phi=\IntBody_{\DD, p}$, and negative for $n \geq 3$. If $p>0$, then the answer to Problem~\ref{prob:BPgeneralPhi} with reversed volume inequality is negative for all $n \geq 2$.
\end{MainTheorem}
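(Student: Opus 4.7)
The plan is to follow the Lutwak--Koldobsky blueprint adapted to the complex setting. The first step is to translate Problem~\ref{prob:BPgeneralPhi} into a question about the range $\Phi(\starbodiesO(\CC^n))$: by a standard argument (Lutwak's for $\IntBody_{C,p}$, Schneider's polarised version for $\Pi_C^\circ$), the answer is affirmative whenever the body on the appropriate side of the inclusion already belongs to the range of $\Phi$, while a sufficiently regular body outside this range, combined with the classical perturbation argument, produces a counterexample. This reduces the theorem to the structural question of describing $\Phi(\starbodiesO(\CC^n))$.

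Next, I would characterise this range Fourier-analytically. Passing to polar coordinates in \eqref{eq:defLpCIntBody} yields
\begin{equation*}
	\rho_{\IntBody_{C,p}K}(u)^{-p} = \frac{1}{p+2n}\int_{\unitsurf{2n-1}} h_{Cu}(v)^{p}\,\rho_K(v)^{p+2n}\, d\sigma(v),
\end{equation*}
so $\rho_{\IntBody_{C,p}K}^{-p}$ is a cosine-type transform of $\rho_K^{p+2n}$; the definition \eqref{eq:defCProjBody} admits an analogous representation on the surface area measure $\surfArea{K}$. Applying the spherical Fourier transform as in Koldobsky's treatment of the real problem, membership of $K$ in the range of $\Phi$ should reduce to the non-negativity of the Fourier transform of a suitable homogeneous power of the radial (respectively support) function of $K$, viewed as a distribution on $\CC^n$. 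This realises the link between Problem~\ref{prob:BPgeneralPhi} and the spherical Fourier transform mentioned in the abstract.

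With this dictionary, the proof proceeds along standard lines. For the affirmative case $n=2$, $\Phi=\IntBody_{\DD,p}$, $p<0$, I would exploit the $\unitsurf{1}$-invariance of the kernel $h_{\DD u}(v)^{p}$ on $\CC^2$ to reduce the Fourier analysis to the quotient by the circle action, where non-negativity can be checked by a direct one-dimensional computation; this plays the role of Koldobsky's theorem that every origin-symmetric convex body in $\RR^4$ is an intersection body. For $n\geq 3$ and $p<0$, I would construct an explicit $\unitsurf{1}$-invariant perturbation of the Euclidean ball by a spherical harmonic of the right degree for which the Fourier criterion fails; the perturbation argument from the first step then delivers a genuine counterexample. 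The case $p>0$ (reversed problem), which includes $\Pi_C^\circ$ via the convention $p=1$, is handled by the same scheme, with counterexamples already appearing in $\CC^2$ because the distinguished positivity available for $C=\DD$ and $p<0$ is no longer present.

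The main obstacle will be the $n=2$ positive result: proving that \emph{every} origin-symmetric, $\unitsurf{1}$-invariant convex body in $\CC^2$ lies in the range of $\IntBody_{\DD,p}$ for every $p\in(-2,0)$. This demands a careful sign analysis of a distributional spherical Fourier transform whose kernel simultaneously encodes the $L_p$-nature of the operator and the circular symmetry of the disc, and it is also where the specific choice $C=\DD$ becomes essential.
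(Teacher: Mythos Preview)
Your strategy matches the paper's almost exactly: reduce Problem~\ref{prob:BPgeneralPhi} to a range characterization via the spherical Fourier transform (this is the content of the paper's Theorem~\ref{mthm:JOp}), identify the range with bodies embedding into $L_p$ (Propositions~\ref{prop:CIntEmbedLp} and \ref{prop:ImPiCPolarEmbedL1}), and then settle each case by determining which $\unitsurf{1}$-invariant convex bodies embed into $L_p$.

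Two refinements in the paper are worth noting. First, in the complex setting the $L_2$-adjoint of the kernel operator $\JOp{C}{p}$ is $\JOp{\overline{C}}{p}$, not $\JOp{C}{p}$ itself; consequently Lutwak's argument gives the affirmative conclusion when $K$ lies in the image of the \emph{conjugated} operator $\Phi^\#=\IntBody_{\overline{C},p}$ resp.\ $\Pi_{\overline{C}}$ (Proposition~\ref{prop:AffirmImage}), a twist that is invisible for $C=\DD=\overline{\DD}$ but that you should anticipate for general $C$. Second, the paper does not carry out either the ``direct one-dimensional computation'' for $n=2$ or an explicit spherical-harmonic perturbation for $n\ge 3$: both steps are handled by citing existing embedding results from \cite{Koldobsky2008, Koldobsky2005}. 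Every $\unitsurf{1}$-invariant convex body in $\CC^2$ embeds into $L_p$ for $p\in(-4,0)$ by \cite{Koldobsky2008}*{Thm.~3}, which disposes of the case you flagged as the main obstacle; and the unit balls of complex $\ell^q_n$, $q>2$, fail to embed in the relevant ranges, furnishing the counterexamples. So the hard analytic work you anticipate is already in the literature, and the paper's contribution is the structural link (Theorem~\ref{mthm:JOp}) between $\JOp{C}{p}$ and $\sphericalFourier{-2n-p}$ that makes these citations applicable.
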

Let us point out that Theorem~\ref{mthm:stateComplBP} implies a similar statement for the complex centroid body defined in \cite{Haberl2019} (see Remark~\ref{rem:compCentroidBody} below).

Our proof of Theorem~\ref{mthm:stateComplBP} follows a strategy similar to \cites{Gardner1999,Yaskin2006,Koldobsky2004, Ryabogin2004} for real Busemann--Petty type problems. In particular, it relies heavily on a connection to the spherical Fourier transform. Indeed, all operators $\Phi \in \{\Pi_{C}^\circ,\CIntBody{C}{p}\}$ can be represented in terms of the operator $\JOp{C}{p}: C(\unitsurf{2n-1}) \to C(\unitsurf{2n-1})$, defined for $f \in C(\unitsurf{2n-1})$ by (see \cite{Ellmeyer2023})
\begin{align}\label{eq:defJOp}
	(\JOp{C}{p} f)(u) = \int_{\unitsurf{2n-1}} h_{C}(v \cdot u)^p f(v) dv, \quad u \in \unitsurf{2n-1},
\end{align}
where $v \cdot u = \langle v, u\rangle + i\langle -iv, u\rangle$ denotes the Hermitian inner product on $\CC^n$. This follows directly from \eqref{eq:defCProjBody} resp.\ \eqref{eq:defLpCIntBody} and the relation $h_{Cu}(v) = h_C(v \cdot u)$. It is our second main result that the map $\JOp{C}{p}$ on even functions can be decomposed into the (distributional) spherical Fourier transform $\sphericalFourier{q}$, defined for even $\varphi \in C^\infty(\unitsurf{m-1})$ by
\begin{align}\label{eq:defSphFourier}
	(\sphericalFourier{q} \varphi)(u) = (\Fourier \varphi_q)(u), \quad u \in \unitsurf{m-1},
\end{align}
where $\varphi_q$ denotes the $q$-homogeneous extension of $\varphi \in C^\infty(\unitsurf{m-1})$ and $\Fourier$ the classical Fourier transform (see Section~\ref{sec:JOp} below), and an integral operator on the body $C$.
\begin{MainTheorem}\label{mthm:JOp}
	Let $C \in \convexbodies(\CC)$ be origin-symmetric containing the origin in its interior and let $-2<p\leq 1$ be non-zero. Then there exists a finite even Borel measure $\nu_{C,p}$ on $\unitsurf{1}$, such that $-p\, \nu_{C,p}\geq 0$, and
	\begin{align}\label{eq:JopBySphFour}
		\JOp{C}{p}\varphi(u)=\frac{1}{(2\pi)^2}\int_{\unitsurf{1}}(\sphericalFourier{-2n-p}\varphi)(cu) d\nu_{C,p}(c), \quad u \in \unitsurf{2n-1},
	\end{align}
	for all even $\varphi \in C^\infty(\unitsurf{2n-1})$.
\end{MainTheorem}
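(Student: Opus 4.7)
My plan reduces the claimed identity on $\unitsurf{2n-1}$ to an equivalent two-dimensional statement about $h_C^p$ on $\CC \cong \RR^2$, via two Koldobsky-type cosine-transform representations. First, I would use the classical formula
\begin{equation*}
(\sphericalFourier{-2n-p}\varphi)(\xi) \;=\; \kappa(p) \int_{\unitsurf{2n-1}} \varphi(v)\,|\innerProduct{v,\xi}|^p\,dv, \qquad \xi \in \unitsurf{2n-1},
\end{equation*}
valid for non-integer $p \in (-2, 1]$ and even $\varphi \in C^\infty(\unitsurf{2n-1})$, with an explicit Gamma-function constant $\kappa(p)$ coming from the 1-dim Fourier transform of $|t|^p$, and extended through integer $p$ by analytic continuation. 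Second, I would produce an even finite signed Borel measure $\nu_{C,p}$ on $\unitsurf{1}$ with $-p\,\nu_{C,p}\ge 0$ realizing the 2D identity
\begin{equation*}
h_C(z)^p \;=\; \frac{\kappa(p)}{(2\pi)^2} \int_{\unitsurf{1}} |\innerProduct{z,c}|^p\,d\nu_{C,p}(c), \qquad z \in \CC.
\end{equation*}

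Given both ingredients, the theorem follows in a few lines. Substituting the 2D identity into the definition of $\JOp{C}{p}$ and applying Fubini gives
\begin{equation*}
(\JOp{C}{p}\varphi)(u)
\;=\; \frac{\kappa(p)}{(2\pi)^2} \int_{\unitsurf{1}} \int_{\unitsurf{2n-1}}|\innerProduct{v\cdot u,c}|^p\,\varphi(v)\,dv\,d\nu_{C,p}(c).
\end{equation*}
The transfer identity $\innerProduct{v\cdot u,c}_{\RR^2}=\innerProduct{v,cu}_{\RR^{2n}}$ (a direct computation from $v\cdot u=\innerProduct{v,u}+i\innerProduct{-iv,u}$ and $cu=(\Re c)\,u+(\Im c)\,iu$) together with the first representation then collapses the inner integral to $\kappa(p)^{-1}(\sphericalFourier{-2n-p}\varphi)(cu)$, and the two $\kappa(p)$ factors cancel to leave the claimed $(2\pi)^{-2}$ normalization in \eqref{eq:JopBySphFour}.

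The real work is the 2D identity. I would construct $\nu_{C,p}$ by inverting the $p$-cosine transform on $\unitsurf{1}$ via Fourier series: this transform acts as a multiplier on the basis $\{\cos k\theta,\sin k\theta\}$, with explicit Gamma-function multipliers non-vanishing on even harmonics for $p\in(-2,1]\setminus\{0\}$. Dividing the Fourier coefficients of $h_C^p|_{\unitsurf{1}}$ by these multipliers yields the coefficients of $\nu_{C,p}$; smoothness of $h_C$ (obtained by approximating $C$) ensures the coefficients decay fast enough to produce a finite signed measure, with the 2-dimensionality of $\unitsurf{1}$ playing an essential role in this regularity conclusion. The sign condition $-p\,\nu_{C,p}\ge 0$ is established for $p\in(-2,0)$ by identifying $h_C^p = \|\cdot\|_{C^\circ}^p$ as a negative power of a norm on $\CC$, whose distributional Fourier transform in $\RR^2$ is a non-negative measure (Koldobsky's intersection-body theory specialized to 2D); for $p\in(0,1]$ the sign is obtained by analytic continuation in $p$, which reverses the sign of $\nu_{C,p}$ across the simple pole at $p=0$.

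The main obstacle will be this 2D construction for positive $p\in(0,1]$. In this range $h_C^p$ is a positive power of a support function, Koldobsky's norm-power framework is inapplicable, and the analytic continuation argument must rigorously preserve both the sign condition and the measure (rather than merely distributional) regularity across $p=0$. A secondary technical point is careful handling of the integer values $p=\pm 1$ in the range, where the constant $\kappa(p)$ has a zero or pole that must cancel the corresponding singularities of the cosine-transform multipliers to give a well-defined identity; this likely requires either a limiting argument or a direct verification in the Fourier-series formulation.
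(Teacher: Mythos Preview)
Your reduction to a two-dimensional identity via the transfer relation $\innerProduct{v\cdot u,c}_{\RR^2}=\innerProduct{v,cu}_{\RR^{2n}}$ is essentially the same mechanism as the paper's proof, though the paper executes it entirely at the level of spherical-harmonic multipliers: it computes $\lambda_{k,l}[\JOp{C}{p}]$, $\lambda_{k,l}[\sphericalFourier{-2n-p}]$, and $\lambda_{k,l}[\TT{\nu_{C,p}}]$ separately and checks $(2\pi)^2\lambda_{k,l}[\JOp{C}{p}]=\lambda_{k,l}[\sphericalFourier{-2n-p}]\lambda_{k,l}[\TT{\nu_{C,p}}]$, defining $\nu_{C,p}:=\sphericalFourier{p}h_C^p$ on $\unitsurf{1}$. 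This sidesteps your integral representation $(\sphericalFourier{-2n-p}\varphi)(\xi)=\kappa(p)\int_{\unitsurf{2n-1}}|\innerProduct{v,\xi}|^p\varphi(v)\,dv$, which is only literally valid for $p>-1$; for $p\in(-2,-1]$ the integral diverges and you would need either a multiplier argument (which is then the paper's proof) or a careful distributional regularization that you do not supply.

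The genuine gap is your sign argument for $p\in(0,1]$. Analytic continuation in $p$ does not establish positivity of a family of measures: even if $p\mapsto\nu_{C,p}$ were analytic in a suitable sense and $\nu_{C,p}\ge 0$ for $p<0$, nothing forces $-\nu_{C,p}\ge 0$ once $p$ crosses zero. The fact you need is that every two-dimensional origin-symmetric convex body embeds into $L_p$ for every non-zero $p\in(-2,2]$; for $p\in(0,1]$ this is a theorem (Koldobsky's book, Thm.~6.17, ultimately going back to Herz/Lindenstrauss for $p=1$), not a formality. The paper simply cites this and reads off $\tfrac{1}{\Gamma(-p/2)}\sphericalFourier{p}h_C^p\ge 0$, which gives $-p\,\nu_{C,p}\ge 0$ directly. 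Your proposal correctly identifies this as ``the main obstacle,'' but the remedy you sketch would not close it.
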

For a more explicit description of the measure $\nu_{C,p}$, we refer to Section~\ref{sec:JOp}. Let us note here that Theorem~\ref{mthm:JOp} extends the well-known connection between the $L_p$-cosine transform and the spherical Fourier transform (see, e.g., \cite{Goodey2009}), which corresponds to the limiting case $C = [-1,1]$ with $\nu_{C,p} = c_p (\delta_{1} + \delta_{-1})$ for some $c_p \in \RR$.

\medskip
Using Theorem~\ref{mthm:JOp}, the proof of Theorem~\ref{mthm:stateComplBP} decomposes into the following steps which, for the reader's convenience, we will explain here in detail, including the exact statements proven in each step. Throughout, we assume $n \geq 2$.

\subsection{Determination of $\convexbodies(\Phi)$}\label{sec:detInjSet}
The first step is to determine the maximal set $\convexbodies(\Phi)$ for which Problem~\ref{prob:BPgeneralPhi} is non-trivial. As mentioned above, it is natural to define $\convexbodies(\Phi)$ as the injectivity set of $\Phi$. More precisely, let $\mathcal{M}(\unitsurf{2n-1})$ denote the set of all (signed) finite Borel measures on $\unitsurf{2n-1}$ and denote by
\begin{align*}
	\mathrm{Inj}(\JOp{C}{p}) = \{\mu  \in \mathcal{M}(\unitsurf{2n-1}): \forall k,l \in \NN: \pi_{k,l}(\JOp{C}{p}\mu) = 0 \implies \pi_{k,l}\mu = 0\}
\end{align*}
the injectivity space of $\JOp{C}{p}$, where $\pi_{k,l}$ denotes the orthogonal projection onto the space of spherical harmonics of bi-degree $(k,l) \in \NN \times \NN$ (see Section~\ref{sec:JOp} for details). The injectivity sets of $\Pi_C$ and $\IntBody_{C, p}$ are defined by
\begin{align}
	\convexbodies(\CompProj_{C}) = \{K \in \convexbodies(\CC^n): \interior K \neq \emptyset, S_K \in \mathrm{Inj}(\JOp{C}{1})\}, \label{eq:InjSetPiC}\\
	\convexbodies(\CIntBody{C}{p}) = \{K \in \convexbodies(\CC^n): \interior K \neq \emptyset, \rho_{K}^{2n+p} \in \mathrm{Inj}(\JOp{C}{p})\}. \label{eq:InjSetIcp}
\end{align}
In particular, if $C$ is origin-symmetric, then every $K \in \convexbodies(\Phi)$ is origin-symmetric, and all $\unitsurf{1}$-invariant convex bodies with non-empty interior are contained both in $\convexbodies(\Pi_{C})$ and $\convexbodies(\IntBody_{C, p})$.

Outside the injectivity sets, a perturbation argument using a specific spherical harmonic shows that the answer to Problem~\ref{prob:BPgeneralPhi} is negative in general. It is therefore necessary and reasonable to restrict the study of Problem~\ref{prob:BPgeneralPhi} to bodies in $\convexbodies(\Phi)$.
\begin{proposition}\label{prop:injecset}
	Let $\Phi \in \{\CompProj_{C}^\circ,\CIntBody{C}{p}\}$ and suppose that $L \in \convexbodies(\CC^n)$ is smooth with positive curvature. If $\convexbodies(\Phi)$ is a strict subset of the set of convex bodies with non-empty interiors, then there is $K \notin \convexbodies(\Phi)$ with non-empty interior, such that
	\begin{align*}
		\Phi K=\Phi L \quad \quad \text{but} \quad \quad -p\Vol{2n}{K}>-p\Vol{2n}{L}.
	\end{align*}
\end{proposition}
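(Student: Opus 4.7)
The plan is a perturbation argument: produce a spherical harmonic $Y$ in the kernel of $\JOp{C}{p}$ and use it to deform $L$. By the $U(n)$-equivariance of $\JOp{C}{p}$ and Schur's lemma, the operator acts on each bi-grade $(k,l)$ subspace of spherical harmonics on $\unitsurf{2n-1}$ as multiplication by some scalar $\lambda_{k,l}$; the strictness hypothesis on $\convexbodies(\Phi)$ then yields a bi-grade $(k_0, l_0)$ with $k_0 + l_0 \geq 2$, $(k_0, l_0) \notin \{(1, 0), (0, 1)\}$, and $\lambda_{k_0, l_0} = 0$. I would pick a non-zero real spherical harmonic $Y$ supported on $(k_0, l_0) \oplus (l_0, k_0)$; then $\JOp{C}{p} Y = 0$, $\int_{\unitsurf{2n-1}} Y\, du = 0$, and $\int_{\unitsurf{2n-1}} u\, Y(u)\, du = 0$.

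For $\Phi = \CIntBody{C}{p}$, I would set $\rho_{K_\epsilon}^{2n+p} = \rho_L^{2n+p} + \epsilon Y$: the smoothness and positive curvature of $L$ ensure that $K_\epsilon$ is a convex body with non-empty interior for small $|\epsilon|$, and $\JOp{C}{p} Y = 0$ yields $\CIntBody{C}{p} K_\epsilon = \CIntBody{C}{p} L$. For $\Phi = \CompProj_C^\circ$ (so $p = 1$), I would instead set $\mu_\epsilon = S_L + \epsilon Y\, du$: the positive curvature of $L$ gives $S_L$ a smooth positive density, so $\mu_\epsilon$ is a non-negative measure with vanishing first moment for small $|\epsilon|$; Minkowski's existence theorem together with elliptic regularity for the associated Monge--Amp\`ere equation then produces a unique smooth convex body $K_\epsilon$ of positive curvature with $S_{K_\epsilon} = \mu_\epsilon$ and $K_\epsilon \to L$, and $\CompProj_C K_\epsilon = \CompProj_C L$ follows from $\JOp{C}{1}(Y\, du) = 0$.

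For the volume comparison in the intersection body case, expanding $\rho_{K_\epsilon}^{2n}$ in $\epsilon$ yields
\begin{equation*}
\voln_{2n}(K_\epsilon) - \voln_{2n}(L) = \frac{\epsilon}{2n+p}\int_{\unitsurf{2n-1}} \frac{Y}{\rho_L^{p}}\, du - \frac{p\epsilon^2}{2(2n+p)^2}\int_{\unitsurf{2n-1}} \frac{Y^2}{\rho_L^{2n+2p}}\, du + O(\epsilon^3).
\end{equation*}
If the first-order integral is non-zero, the sign of $\epsilon$ can be chosen to make $-p$ times the difference positive; if it vanishes, the $\epsilon^2$-coefficient times $-p$ equals $\frac{p^2}{2(2n+p)^2}\int_{\unitsurf{2n-1}} Y^2 \rho_L^{-2n-2p}\, du > 0$, dominating for any small $\epsilon \neq 0$. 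In the projection body case, the first variation of mixed volumes gives $\voln_{2n}(K_\epsilon) - \voln_{2n}(L) = \frac{\epsilon}{2n-1}\int_{\unitsurf{2n-1}} h_L Y\, du + O(\epsilon^2)$; if this integral is non-zero, the sign of $\epsilon$ is chosen so that $\voln_{2n}(K_\epsilon) < \voln_{2n}(L)$; if it vanishes, then $\mathrm{V}(L, K_\epsilon[2n-1]) = \voln_{2n}(L)$ exactly, and Minkowski's inequality combined with the fact that $K_\epsilon$ is not a homothet of $L$ (since $Y$ has degree $\geq 2$ while $S_L$ has positive density) forces $\voln_{2n}(K_\epsilon) < \voln_{2n}(L)$. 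In either case, for small $\epsilon \neq 0$ the relevant function ($\rho_{K_\epsilon}^{2n+p}$ or the density of $S_{K_\epsilon}$) has non-zero bi-grade $(k_0, l_0)$ component annihilated by $\JOp{C}{p}$, so $K_\epsilon \notin \convexbodies(\Phi)$.

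The hardest part will be the projection body case: constructing $K_\epsilon$ smoothly relies on Monge--Amp\`ere regularity for the Minkowski existence problem, and when the first variation vanishes the equality case of Minkowski's inequality is needed. The intersection body case is more direct, as perturbations of the radial function pass to convex bodies immediately from the smoothness and positive curvature of $L$.
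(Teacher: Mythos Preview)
Your proposal is correct and follows the same perturbation strategy as the paper: pick a spherical harmonic in the kernel of $\JOp{C}{p}$ and deform $L$ by it. The difference lies in how the volume inequality is extracted. For $\CIntBody{C}{p}$, you Taylor-expand $\voln_{2n}(K_\epsilon)$ to second order and split into cases according to whether the first-order term vanishes; the paper instead observes that $-p\,\DualMixedVol{-p}{K_\epsilon,L} = -p\voln_{2n}(L) - \tfrac{p\epsilon}{2n}\int Y\rho_L^{-p}\,du \geq -p\voln_{2n}(L)$ for one sign of $\epsilon$ and then applies the dual $L_p$-Minkowski inequality~\eqref{eq:dualLpMinkineq} together with its equality case, which covers both situations in one stroke. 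For $\CompProj_C^\circ$, the paper computes $\MixedVol{K_\epsilon,L} = \voln_{2n}(L) + \tfrac{\epsilon}{2n}\int h_L Y\,du$ \emph{exactly} and applies Minkowski's first inequality directly; this bypasses your first-variation formula $\voln_{2n}(K_\epsilon)-\voln_{2n}(L)=\tfrac{\epsilon}{2n-1}\int h_L Y\,du+O(\epsilon^2)$ (which, while correct, requires smooth dependence of $K_\epsilon$ on $\epsilon$ via Monge--Amp\`ere regularity). Your route is arguably more elementary in the intersection case, since it avoids the dual Minkowski inequality, but in the projection case it imports regularity machinery that the paper's mixed-volume argument sidesteps entirely.
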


\subsection{Affirmative Cases}\label{sec:affirmCases}
The next step is to confirm the statement of Problem~\ref{prob:BPgeneralPhi} in special cases, corresponding to intersection resp.\ projection bodies in the Busemann--Petty resp.\ Shephard problem. Differently to the real problems, here, the answer is affirmative for bodies in the image of the \emph{conjugated} operator $\Phi^\#$, where $\IntBody_{C,p}^\# = \IntBody_{\overline{C},p}$ and $\Pi_{C}^\# = \Pi_{\overline{C}}$ with $\overline{\,\cdot\,}$ denoting complex conjugation. If $C = \overline{C}$, in particular, in the limit $C \to [-1,1]$, this is consistent with the real case.

We write $\image \Phi^\#$ for $\Pi_{\overline{C}}(\convexbodies)$ resp.\ $\IntBody_{\overline{C}, p}(\starbodiesO)$, where
\begin{align*}
	\Pi_{C}(\convexbodies) = \{K \in \convexbodies(\Phi): \,\exists L \in \convexbodies(\CC^n): \Pi_C L = K\},\\
	\IntBody_{C, p}(\starbodiesO) = \{K \in \convexbodies(\Phi): \,\exists L \in \starbodiesO(\CC^n): \IntBody_{C, p} L = K\}.
\end{align*}
Let us point out here that we consider the image of all convex resp.\ star bodies, that is, we do not consider $\Phi$ as operator on $\convexbodies(\Phi)$ only. The statement then reads

\begin{proposition}\label{prop:AffirmImage}
	Let  $\Phi \in \{\CompProj_{C}^\circ,\CIntBody{C}{p}\}$. If $K \in \image \Phi^\#$ and $L \in \convexbodies(\Phi)$, then
	\begin{align*}
		\Phi K \subseteq \Phi L \quad \implies \quad -pV_{2n}(K) \leq -pV_{2n}(L).
	\end{align*}
	Moreover, in this case, if $\Phi K \subseteq \Phi L$, then $\Vol{2n}{K}=\Vol{2n}{L}$ holds if and only if $K$ is a translate of $L$, when $\Phi=\CompProj_{C}^\circ$, and only if $K=L$, when $\Phi=\CIntBody{C}{p}$.
\end{proposition}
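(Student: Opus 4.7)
The plan is to handle the two choices of $\Phi$ in parallel via the same template: the conjugated operator $\Phi^{\#}$ serves as an adjoint, so the inclusion hypothesis, understood pointwise on $\unitsurf{2n-1}$, can be integrated against a distinguished positive test function to yield a volume inequality after invoking one classical inequality. The structural observation driving everything is the identity $v\cdot u=\overline{u\cdot v}$, which gives $h_{\overline{C}}(v\cdot u)=h_{C}(u\cdot v)$ and so makes $\JOp{\overline{C}}{p}$ the $L^{2}(\unitsurf{2n-1})$-adjoint of $\JOp{C}{p}$; this is precisely the reason complex conjugation enters the definition of $\Phi^{\#}$.

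\textbf{Case $\Phi=\CIntBody{C}{p}$.} Write $K=\CIntBody{\overline{C}}{p}M$ for some $M\in\starbodiesO(\CC^{n})$. Passing to polar coordinates in \eqref{eq:defLpCIntBody}, one records the identity $\rho_{K}(u)^{-p}=(2n+p)^{-1}\JOp{\overline{C}}{p}(\rho_{M}^{2n+p})(u)$ and its analogues expressing $\rho_{\CIntBody{C}{p}K}^{-p}$ and $\rho_{\CIntBody{C}{p}L}^{-p}$ via $\JOp{C}{p}$. The inclusion $\CIntBody{C}{p}K\subseteq\CIntBody{C}{p}L$ is then equivalent to $-p\,\JOp{C}{p}(\rho_{K}^{2n+p})\leq -p\,\JOp{C}{p}(\rho_{L}^{2n+p})$ pointwise on $\unitsurf{2n-1}$, where the prefactor $-p$ absorbs the sign change under $x\mapsto x^{-p}$. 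Integrating this against the positive weight $\rho_{M}^{2n+p}\,du$ and moving $\JOp{C}{p}$ onto the test function via its adjoint $\JOp{\overline{C}}{p}$ collapses the left-hand side to $-p\cdot 2n(2n+p)\Vol{2n}{K}$ and the right-hand side to $-p(2n+p)\int\rho_{L}^{2n+p}\rho_{K}^{-p}\,du$. A H\"older inequality with conjugate exponents $\frac{2n}{2n+p}$ and $\frac{2n}{-p}$, applied in its forward form when $p<0$ and in its reverse form when $p>0$ (where one exponent becomes negative), bounds the remaining integral in the direction dictated by the sign of $-p$ by $2n\Vol{2n}{L}^{(2n+p)/(2n)}\Vol{2n}{K}^{-p/(2n)}$; rearranging delivers $-p\Vol{2n}{K}\leq -p\Vol{2n}{L}$.

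\textbf{Case $\Phi=\CompProj_{C}^{\circ}$.} Here $p=1$ and $K=\CompProj_{\overline{C}}M$, and polarity turns the hypothesis into $h_{\CompProj_{C}K}(v)\geq h_{\CompProj_{C}L}(v)$ for all $v\in\unitsurf{2n-1}$. Starting from $\Vol{2n}{K}=\frac{1}{2n}\int h_{K}\,dS_{K}$, I expand $h_{K}=h_{\CompProj_{\overline{C}}M}$ using \eqref{eq:defCProjBody}, swap integrations by Fubini together with $h_{\overline{C}u}(v)=h_{Cv}(u)$, and obtain $\Vol{2n}{K}=\frac{1}{2n}\int h_{\CompProj_{C}K}(v)\,dS_{M}(v)$. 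Inserting the pointwise inequality and performing one more Fubini step rewrites the resulting lower bound as $\frac{1}{2n}\int h_{K}\,dS_{L}=\MixedVol{K,L,\ldots,L}$, with $L$ appearing $2n-1$ times. Minkowski's first inequality $\MixedVol{K,L,\ldots,L}^{2n}\geq \Vol{2n}{K}\Vol{2n}{L}^{2n-1}$ then produces $\Vol{2n}{K}\geq\Vol{2n}{L}$, i.e., $-p\Vol{2n}{K}\leq -p\Vol{2n}{L}$.

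For the equality statements, one traces the equality conditions of the underlying classical inequalities: H\"older equality forces $\rho_{K}$ and $\rho_{L}$ to be proportional, whereupon $\Vol{2n}{K}=\Vol{2n}{L}$ promotes this to $K=L$, while Minkowski equality forces $K$ and $L$ to be homothetic, and coinciding volumes promote homothety to a translation. The only genuine subtlety I anticipate is the sign bookkeeping in the H\"older step for $\CIntBody{C}{p}$: one has to verify uniformly over $p\in(-2,1)\setminus\{0\}$ that every inequality composes in the correct direction after multiplication by $-p$, and in particular switches to the reverse H\"older inequality when $p>0$.
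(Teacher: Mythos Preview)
Your proof is correct and follows the same template as the paper's: an adjointness identity for the pair $(\Phi,\Phi^\#)$ with respect to (dual) mixed volumes, followed by monotonicity and then the dual $L_p$-Minkowski inequality (your H\"older step is exactly \eqref{eq:dualLpMinkineq}) resp.\ Minkowski's first inequality \eqref{eq:Minkineq}. The equality discussion is also the same.

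The one genuine difference is \emph{how} the adjointness identity is obtained. The paper derives $\DualMixedVol{-p}{K,\IntBody_{\overline{C},p}L}=\DualMixedVol{-p}{L,\IntBody_{C,p}K}$ and $\MixedVol{K,\Pi_{\overline{C}}L}=\MixedVol{L,\Pi_{C}K}$ by passing through the spherical Fourier decomposition of Theorem~\ref{mthm:JOp} and Proposition~\ref{prop:repIcpPicByFourier}, using that $\sphericalFourier{-2n-p}$ is self-adjoint and that $\TT{\nu_{C,p}}$ and $\TT{\nu_{\overline{C},p}}$ are adjoint (Corollary~\ref{cor:adjCkonjcompl}). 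You instead read the adjointness off directly from the kernel symmetry $h_{\overline{C}}(v\cdot u)=h_{C}(u\cdot v)$ (equivalently $h_{\overline{C}u}(v)=h_{Cv}(u)$) and Fubini, which shows $\JOp{\overline{C}}{p}=\JOp{C}{p}^{\ast}$ without any harmonic analysis. Your route is more elementary and self-contained for this proposition; the paper's route has the virtue of reusing the Fourier machinery that is needed elsewhere anyway. A minor notational quibble: in the $\Pi_C$ case your ``$\MixedVol{K,L,\ldots,L}$'' is what the paper writes as $\MixedVol{L,K}=\frac{1}{2n}\int h_K\,dS_L$; the argument is unaffected.
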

Proposition~\ref{prop:AffirmImage} is proved using an adjointness property of $\Pi_{C}$ resp.\ $\IntBody_{C, p}$ with respect to mixed volumes resp.\ dual mixed volumes, similar to the real case. In this sense, $\Phi^\#$ can be interpreted as the adjoint of $\Phi$ (while $\JOp{\overline{C}}{p}$ is the $L_2$-adjoint of $\JOp{C}{p}$).

\subsection{Outside the image}\label{sec:outsideIm}
To understand the image of $\Pi_{C}^\circ$ resp.\ $\IntBody_{C, p}$, we will use the notion of a body embedding into $L_p$ (see Definition~\ref{def:embedIsomLpNeg} below), which already proved very useful for $L_p$-intersection bodies. Indeed, we will show that all bodies in $\Pi_{C}^\circ(\convexbodies)$ resp.\ $\IntBody_{C, p}(\starbodiesO)$ embed into $L_p$.

Applying a known characterization of embeddability by the spherical Fourier transform, a perturbation argument then shows that Problem~\ref{prob:BPgeneralPhi}, restricted to $\unitsurf{1}$-invariant bodies, has a negative answer outside the images. Let us point out here that, in contrast to the usual treatment of real Busemann--Petty problems, we do not extend the maps $\IntBody_{C, p}$ to measures. Therefore, technically, we have to consider the (weak) closure of the images (see \Cref{prop:counterexIDp} and \Cref{prop:counterexPiD} below).
\begin{proposition}\label{prop:counterex}
	Let $\Phi \in \{\CompProj_{C}^\circ,\CIntBody{C}{p}\}$. If there is an $\unitsurf{1}$-invariant body in $\convexbodies(\Phi)$, which is not in the (weak) closure of $\Pi_{C}^\circ(\convexbodies)\cap \convexbodies(\Pi_C)$ resp.\ $\IntBody_{C,p}(\starbodiesO)\cap \convexbodies(\IntBody_{C,p})$, then there exist $\unitsurf{1}$-invariant bodies $K, L \in \convexbodies(\Phi)$ such that
	\begin{align*}
		\Phi K \subseteq \Phi L \quad \quad \text{but} \quad \quad  -p\Vol{2n}{K}>-p\Vol{2n}{L}.
	\end{align*}
\end{proposition}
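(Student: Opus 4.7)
The plan is to adapt the Fourier-analytic counterexample strategy developed for real Busemann--Petty type problems (cf.~\cites{Gardner1999,Yaskin2006}) to the complex setting, using Theorem~\ref{mthm:JOp} as the bridge between $\JOp{C}{p}$ and the spherical Fourier transform $\sphericalFourier{-2n-p}$.

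First, I would show that every $M$ lying in $\Pi_C^\circ(\convexbodies)\cap\convexbodies(\Pi_C)$, respectively in $\IntBody_{C,p}(\starbodiesO)\cap\convexbodies(\IntBody_{C,p})$, satisfies an appropriate Fourier-positivity condition. Writing the defining identity of $\Phi M$ in terms of $\JOp{C}{p}$ and applying Theorem~\ref{mthm:JOp}, the function that describes $M$ (the support function $h_{\Pi_C^\circ M}$, respectively the power $\rho_M^{2n+p}$ of the radial function) appears as an $\unitsurf{1}$-average of $\sphericalFourier{-2n-p}$ applied to a non-negative function, integrated against the non-negative measure $-p\,\nu_{C,p}$. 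This places $M$, and hence every element of the corresponding weak closure, into a class of bodies that embed into $L_p$ in the sense of Koldobsky's Fourier characterization.

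Consequently, if $K_0\in\convexbodies(\Phi)$ is $\unitsurf{1}$-invariant but lies outside this weak closure, then $K_0$ fails this Fourier-positivity condition, so $\sphericalFourier{-2n-p}(\rho_{K_0}^{2n+p})$ (or the appropriate projection analogue) is not a non-negative measure on $\unitsurf{2n-1}$. By $\unitsurf{1}$-averaging and mollification one produces a smooth, even, $\unitsurf{1}$-invariant $\psi\geq 0$ supported in an open set $U\subseteq\unitsurf{2n-1}$ on which this Fourier transform is strictly negative. Next, I would perturb $K_0$ in the direction $\sphericalFourier{-2n-p}\psi$ --- e.g.\ setting $\rho_L^{2n+p}=\rho_{K_0}^{2n+p}+\varepsilon\,\sphericalFourier{-2n-p}\psi$ for $\Phi=\IntBody_{C,p}$, and analogously at the level of the surface area measure for $\Phi=\Pi_C^\circ$. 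Combining Theorem~\ref{mthm:JOp} with the self-duality of $\sphericalFourier{q}$ and the support condition on $\psi$ then forces $\Phi(\text{perturbation})-\Phi K_0$ to have a definite sign on all of $\unitsurf{2n-1}$, yielding the containment $\Phi K\subseteq\Phi L$ after assigning $K_0$ to $K$ or $L$ according to the sign of $p$. By contrast, the volume functional pairs $\rho^{2n}$ directly with $\sphericalFourier{-2n-p}\psi$, and by Parseval this quantity has the opposite sign, giving $-p\,\Vol{2n}{K}>-p\,\Vol{2n}{L}$. For sufficiently small $\varepsilon>0$, convexity, non-empty interior, $\unitsurf{1}$-invariance, and membership in $\convexbodies(\Phi)$ are all preserved.

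The main obstacle is that the Fourier-analytic counterexample naturally lives at the level of distributions or signed measures, whereas the operators $\Pi_C$ and $\IntBody_{C,p}$ are only defined on honest convex, respectively star, bodies. This is precisely why the hypothesis is phrased in terms of the \emph{weak} closure: it ensures that the obstruction to $K_0$ embedding into $L_p$ is robust enough to survive both the mollification and the smallness of $\varepsilon$ needed to keep $L$ inside the class of bodies on which $\Phi$ is defined. Carrying this approximation through separately for $\Phi=\IntBody_{C,p}$ and $\Phi=\Pi_C^\circ$ --- which is the content of Propositions~\ref{prop:counterexIDp} and~\ref{prop:counterexPiD} --- will be the most delicate part of the argument.
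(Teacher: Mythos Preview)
Your overall strategy---characterize the image by a Fourier-positivity condition, then perturb a body that violates it---is exactly what the paper does. However, two key technical ingredients are misidentified, and without them the argument does not go through.

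\medskip

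\textbf{Wrong Fourier condition.} The function encoding a body $M$ in $\IntBody_{C,p}(\starbodiesO)$ is $\rho_M^{-p}$, not $\rho_M^{2n+p}$: by \eqref{eq:IcpPicByJOp} and Theorem~\ref{mthm:JOp}, $\rho_M^{-p}=\tfrac{1}{(2\pi)^2(2n+p)}\TT{\nu_{C,p}}\sphericalFourier{-2n-p}\rho_{M_0}^{2n+p}$. Applying $\sphericalFourier{p}$ and using \eqref{eq:FourierInverse}, the positivity condition is
\[
\tfrac{1}{\Gamma(-p/2)}\,\sphericalFourier{p}\rho_M^{-p}\ \geq\ 0,
\]
i.e.\ $M$ embeds into $L_p$ (Proposition~\ref{prop:embedLp}). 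The quantity you wrote, $\sphericalFourier{-2n-p}\rho_{K_0}^{2n+p}$, is (up to constants) $\rho_{\IntBody_{\DD,p}K_0}^{-p}$ for $\unitsurf{1}$-invariant $K_0$, hence \emph{always} positive, so it cannot detect failure of membership. The correct obstruction is that $-p\,\sphericalFourier{p}\rho_{L}^{-p}$ takes negative values on some open set $U$, and the perturbation is $\psi=\sphericalFourier{p}\varphi$ (degree $p$, not $-2n-p$) with $\varphi\geq 0$ supported in $U$. The analogous correction applies in the projection case: one needs $\sphericalFourier{1}h_L$, not $\sphericalFourier{-2n-1}$ of anything.

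\medskip

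\textbf{Volume comparison.} The volume inequality is not obtained by ``pairing $\rho^{2n}$ directly'' with the perturbation. The paper pairs $\rho_K^{2n+p}$ with $\rho_L^{-p}$---this is the dual mixed volume $\DualMixedVol{-p}{K,L}$---and then invokes the dual $L_p$-Minkowski inequality~\eqref{eq:dualLpMinkineq} (resp.\ Minkowski's first inequality~\eqref{eq:Minkineq} for $\Pi_C$) to convert $-p\DualMixedVol{-p}{K,L}>-p\voln_{2n}(L)$ into $-p\voln_{2n}(K)>-p\voln_{2n}(L)$. The Parseval step is $\langle\psi,\rho_L^{-p}\rangle=\langle\sphericalFourier{p}\varphi,\rho_L^{-p}\rangle=\langle\varphi,\sphericalFourier{p}\rho_L^{-p}\rangle$, which has the desired sign precisely because $\varphi$ lives where $-p\sphericalFourier{p}\rho_L^{-p}<0$. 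With your choice of perturbation and support set, this pairing does not land on the right object.

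\medskip

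Once you swap $\sphericalFourier{-2n-p}\rho^{2n+p}$ for $\sphericalFourier{p}\rho^{-p}$ and route the volume comparison through $\DualMixedVol{-p}$ (resp.\ $\MixedVol{\cdot,\cdot}$) and the corresponding Minkowski inequality, your outline becomes the paper's proof.
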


\subsection{Counter examples in higher dimensions}\label{sec:counterexHidim}
Proposition~\ref{prop:counterex} allows to give $\unitsurf{1}$-invariant counter examples to Problem~\ref{prob:BPgeneralPhi} in higher dimensions. These are known examples of bodies which do not embed into $L_p$ (see \cite{Koldobsky2008}).
\begin{proposition}\label{prop:counterExHidim}
	Let $\Phi \in \{\CompProj_{C}^\circ,\CIntBody{C}{p}\}$. If $n \geq 3$, then there exists an $\unitsurf{1}$-invariant body $K \in \convexbodies(\Phi)$, which is not in the (weak) closure of $\Pi_{C}^\circ(\convexbodies)\cap \convexbodies(\Pi_C)$ resp.\ $\IntBody_{C,p}(\starbodiesO)\cap \convexbodies(\IntBody_{C,p})$. If $p>0$, then the same is true also for $n=2$.
\end{proposition}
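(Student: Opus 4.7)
Our strategy is to apply Proposition~\ref{prop:counterex}: in each claimed case we must produce an $\unitsurf{1}$-invariant convex body $K \subset \CC^n$ with non-empty interior that lies in $\convexbodies(\Phi)$ but \emph{not} in the (weak) closure of $\CompProj_{C}^\circ(\convexbodies)\cap \convexbodies(\CompProj_C)$ resp.\ $\CIntBody{C}{p}(\starbodiesO)\cap \convexbodies(\CIntBody{C}{p})$. The inclusion $K \in \convexbodies(\Phi)$ is automatic: by the remark after~\eqref{eq:InjSetIcp}, every $\unitsurf{1}$-invariant convex body with non-empty interior lies in $\convexbodies(\CompProj_{C})\cap \convexbodies(\CIntBody{C}{p})$. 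Hence the proof reduces to exhibiting such a body outside the relevant image closure.

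By the results of Section~\ref{sec:outsideIm}, every body in $\CompProj_{C}^\circ(\convexbodies)$ resp.\ $\CIntBody{C}{p}(\starbodiesO)$ embeds into $L_p$ in the appropriate complex $\unitsurf{1}$-invariant sense, and this property passes to weak limits. Complex $\unitsurf{1}$-invariant embeddability into $L_p$ is characterized by a sign condition on the spherical Fourier transform $\sphericalFourier{-2n-p}$ of the relevant power of the radial (or support) function. Accordingly, it suffices to construct an $\unitsurf{1}$-invariant convex body whose associated Fourier transform fails to have constant sign.

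For $n \geq 3$, we take the complex $\ell_q$-ball
\begin{align*}
	B_q^n(\CC) = \Bigl\{z \in \CC^n : \sum_{j=1}^n |z_j|^q \leq 1\Bigr\}, \quad q \geq 1,
\end{align*}
which is $\unitsurf{1}$-invariant, convex with non-empty interior, and, for $q$ sufficiently large (depending on $p$), is known from \cite{Koldobsky2008} not to embed into $L_p$ in the required complex sense. For the remaining case $p > 0$, $n = 2$, such $\ell_q$-balls no longer suffice, and we perturb the Euclidean unit ball $B \subset \CC^2$: since $\rho_B^{2n+p}$ is constant and its spherical Fourier transform is therefore supported only on the bi-degree $(0,0)$, adding a small $\unitsurf{1}$-invariant spherical harmonic of a bi-degree $(k,k)$, $k \geq 1$, on which $\sphericalFourier{-2n-p}$ acts with the ``wrong'' sign produces a convex body whose Fourier transform fails to have constant sign.

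The main technical difficulty is the case $p > 0$, $n = 2$: we must locate a bi-degree $(k,k)$ where the eigenvalue of $\sphericalFourier{-2n-p}$ has the ``wrong'' sign, and ensure that the perturbed body remains convex with non-empty interior. The former is guaranteed by the explicit (Gamma-function) form of the eigenvalues of $\sphericalFourier{q}$ on bi-degrees $(k,k)$, which change sign as $k$ varies; the latter follows by choosing the perturbation amplitude sufficiently small, using that $B$ has strictly positive curvature. Together with the $n \geq 3$ construction, this establishes the proposition via Proposition~\ref{prop:counterex}.
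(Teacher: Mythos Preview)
Your treatment of the case $n\geq 3$ agrees with the paper: both reduce the question to non-embeddability into $L_p$ and invoke the complex $\ell_q$-balls from \cite{Koldobsky2008} (together with \cite{Koldobsky2005}*{Thm.~6.17} for positive $p$).

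The case $n=2$, $p>0$, however, has a genuine gap. First, the sign criterion you invoke is not the right one: by Proposition~\ref{prop:embedLp} (and Propositions~\ref{prop:CIntEmbedLp}, \ref{prop:S1invimg}), membership in the weak closure of the image is governed by the sign of $\frac{1}{\Gamma(-p/2)}\sphericalFourier{p}\rho_K^{-p}$, \emph{not} by $\sphericalFourier{-2n-p}$ applied to $\rho_K^{2n+p}$. If you set $\rho_K^{2n+p}=1+\varepsilon Y_{k,k}$, then $\rho_K^{-p}=(1+\varepsilon Y_{k,k})^{-p/(2n+p)}$ is not a finite combination of spherical harmonics, so the clean two-term Fourier expansion you rely on does not apply to the quantity that actually matters.

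Second, even after correcting to $\rho_K^{-p}=1+\varepsilon Y_{k,k}$, the argument ``choose $\varepsilon$ small'' fails as written. For small $\varepsilon$ the bi-degree $(0,0)$ term $\frac{1}{\Gamma(-p/2)}\lambda_{0,0}[\sphericalFourier{p}]>0$ dominates, so the perturbed body \emph{still} embeds into $L_p$; Lemma~\ref{lem:convradial} gives convexity but says nothing about failure of the sign condition. What one would actually need is a quantitative balance: $\varepsilon$ large enough that $\varepsilon\,\lambda_{k,k}[\sphericalFourier{p}]\|Y_{k,k}\|_\infty$ beats the $(0,0)$ term, yet small enough (relative to the second derivatives of $Y_{k,k}$, which grow with $k$) to keep $K$ convex. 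You have not shown that such an $\varepsilon$ exists. The paper avoids this entirely by citing \cite{Koldobsky2008}*{Thm.~6}, which furnishes explicit $\unitsurf{1}$-invariant convex bodies in $\CC^2$ that fail to embed into $L_p$ for every $p>0$.
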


\subsection{Affirmative answer in low dimension}\label{sec:affAnsLoDim}
The final step is to give an affirmative answer to Problem~\ref{prob:BPgeneralPhi} in the remaining (complex) dimension $n=2$. As we describe the image of $\IntBody_{C, p}$ only in complete detail for $C = \DD$, we prove the following.
\begin{proposition}\label{MainThmB}
	If $\Phi=\CIntBody{\DD}{p}$ and $-2<p<0$, then the answer to Problem~\ref{prob:BPgeneralPhi} is affirmative in $\CC^2$.
\end{proposition}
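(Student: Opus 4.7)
The plan is to show that in $\CC^{2}$ every $K\in\convexbodies(\IntBody_{\DD,p})$ lies in the weak closure of $\IntBody_{\DD,p}(\starbodiesO)\cap\convexbodies(\IntBody_{\DD,p})$, after which the affirmative answer follows from \Cref{prop:AffirmImage} together with a limiting argument.

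Since $\DD$ is $\unitsurf{1}$-invariant, the measure $\nu_{\DD,p}$ produced by \Cref{mthm:JOp} is a constant multiple of Haar measure on $\unitsurf{1}$, so $\JOp{\DD}{p}$ annihilates every $\pi_{k,l}$-component with $k\neq l$. Consequently $\convexbodies(\IntBody_{\DD,p})$ consists only of $\unitsurf{1}$-invariant origin-symmetric convex bodies, and for such bodies \eqref{eq:JopBySphFour} collapses to $\JOp{\DD}{p}\varphi=c\,\sphericalFourier{-4-p}\varphi$ with a non-zero constant $c$. Integration in polar coordinates then gives
\begin{equation*}
\rho_{\IntBody_{\DD,p}K}^{-p}\;=\;c'\,\sphericalFourier{-4-p}\bigl(\rho_{K}^{4+p}\bigr)\quad\text{on }\unitsurf{3},
\end{equation*}
and, since $\overline{\DD}=\DD$ forces $\IntBody_{\DD,p}^{\#}=\IntBody_{\DD,p}$, Fourier inversion identifies the weak closure of $\IntBody_{\DD,p}(\starbodiesO)\cap\convexbodies(\IntBody_{\DD,p})$ precisely with those $\unitsurf{1}$-invariant $K\in\convexbodies(\IntBody_{\DD,p})$ for which $\|\cdot\|_{K}^{p}$ is a positive-definite distribution on $\RR^{4}$, i.e.\ for which $K$ embeds into $L_{p}$ in the sense of \Cref{def:embedIsomLpNeg}.

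The crucial input is Koldobsky's low-dimensional embedding theorem: every origin-symmetric convex body in $\RR^{4}$ embeds into $L_{q}$ for every $-4<q<0$, hence in particular for $q=p\in(-2,0)$. Since every $K\in\convexbodies(\IntBody_{\DD,p})$ is automatically $\unitsurf{1}$-invariant, averaging the representing positive distribution over $\unitsurf{1}$ preserves both positivity and the identity $\Fourier(\|\cdot\|_K^{p})=\mu_K$, so every such $K$ lies in the weak closure described above. Approximating $K$ weakly by $K_{n}\in\IntBody_{\DD,p}(\starbodiesO)\cap\convexbodies(\IntBody_{\DD,p})$, the adjoint identity underlying \Cref{prop:AffirmImage},
\begin{equation*}
\int_{\unitsurf{3}}\rho_{K_{n}}^{4+p}\,\rho_{\IntBody_{\DD,p}L}^{-p}\,du\;=\;\int_{\unitsurf{3}}\rho_{\IntBody_{\DD,p}K_{n}}^{-p}\,\rho_{L}^{4+p}\,du\;\leq\;\int_{\unitsurf{3}}\rho_{\IntBody_{\DD,p}L}^{-p}\,\rho_{L}^{4+p}\,du,
\end{equation*}
combined with H\"older's inequality with conjugate exponents $\tfrac{4}{4+p}$ and $\tfrac{4}{-p}$, yields $V_{4}(K_{n})\leq V_{4}(L)$; continuity of $V_{4}$ then passes the inequality to the limit.

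The main obstacle is transferring Koldobsky's real-variable embedding result to the complex Fourier-analytic framework of \Cref{mthm:JOp} without loss. This has two components: the positive representing distribution must be chosen $\unitsurf{1}$-invariant (handled by averaging over $\unitsurf{1}$), and the approximation $K_{n}\to K$ must be strong enough that the adjoint/H\"older argument still passes to the limit even though $\IntBody_{\DD,p}K_{n}\subseteq\IntBody_{\DD,p}L$ holds only asymptotically. Both steps are standard but need to be executed with some care, since the Parseval pairing above must remain jointly continuous for the sequence $\rho_{K_n}$.
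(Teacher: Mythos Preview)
Your overall strategy---reduce to the fact that every $K\in\convexbodies(\IntBody_{\DD,p})$ in $\CC^2$ embeds into $L_p$, then deduce the volume inequality---is the same as the paper's, but there are two genuine problems in the execution.

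First, the embedding theorem you invoke is misstated. It is \emph{not} true that every origin-symmetric convex body in $\RR^4$ embeds into $L_q$ for all $-4<q<0$; for $q\in(-1,0)$ this fails (cf.\ the real $L_p$-Busemann--Petty problem \cite{Yaskin2006}). The $\unitsurf{1}$-invariance is not a cosmetic afterthought but the crux of the matter: the result you actually need is \cite{Koldobsky2008}*{Thm.~3}, which says that every $\unitsurf{1}$-invariant origin-symmetric convex body in $\CC^2$ embeds into $L_p$ for the full range $p\in(-4,0)$. Your averaging step does nothing here, since for an $\unitsurf{1}$-invariant $K$ the distribution $\Fourier(\|\cdot\|_K^p)$ is already $\unitsurf{1}$-invariant; the question is whether it is nonnegative at all, and that requires the complex-specific theorem.

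Second, the approximation detour via $K_n\in\IntBody_{\DD,p}(\starbodiesO)$ is both unnecessary and, as written, unjustified: your displayed inequality
\begin{equation*}
\int_{\unitsurf{3}}\rho_{\IntBody_{\DD,p}K_n}^{-p}\,\rho_L^{4+p}\,du\;\le\;\int_{\unitsurf{3}}\rho_{\IntBody_{\DD,p}L}^{-p}\,\rho_L^{4+p}\,du
\end{equation*}
needs $\IntBody_{\DD,p}K_n\subseteq\IntBody_{\DD,p}L$, which you do not have for the approximants (only for $K$ itself). The paper sidesteps this entirely. Once one knows $\sphericalFourier{p}\rho_K^{-p}\ge0$, one pairs it directly against the pointwise inequality $\sphericalFourier{-4-p}\rho_K^{4+p}\le\sphericalFourier{-4-p}\rho_L^{4+p}$ (equivalent to the hypothesis $\IntBody_{\DD,p}K\subseteq\IntBody_{\DD,p}L$ via \Cref{prop:repIcpPicByFourier}), and Parseval \eqref{eq:FourierInverse} turns this into $4\voln_4(K)\le4\DualMixedVol{-p}{L,K}$; the dual $L_p$-Minkowski inequality \eqref{eq:dualLpMinkineq} then finishes. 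No approximation, no limit passage.
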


\bigskip

\subsection*{Plan of the article} We start with an analysis of $\JOp{C}{p}$ in terms of spherical harmonics and give a proof of Theorem~\ref{mthm:JOp} in Section~\ref{sec:JOp}. Then we prove Theorem~\ref{mthm:stateComplBP} for the complex $L_p$-intersection bodies $\IntBody_{C, p}$ in Section~\ref{sec:intbodies}, and for the complex projection body in Section~\ref{sec:projbodies}, following the outlined proof strategy. The necessary background will be given wherever it is required first.

\section{Analysis of the operator $\JOp{C}{p}$}\label{sec:JOp}
In this section, we study the operator $\JOp{C}{p}$ using the theory of spherical harmonics in complex vector spaces, and prove Theorem~\ref{mthm:JOp}. We will start by recalling some background on spherical harmonics. For further details, we refer to \cites{Abardia2015, Groemer1996,Quinto1987, Rudin2008}, as well as to the references therein. We will orient ourselves at the presentation in \cite{Abardia2015}. For basic properties of the operator $\JOp{C}{p}$, we refer to \cite{Ellmeyer2023}.

\medskip

First, recall that a spherical harmonic on $\unitsurf{2n-1}$ is the restriction of a harmonic polynomial on $\CC^n = \RR^{2n}$ to $\unitsurf{2n-1}$. Under the natural action of the orthogonal group $\OO(2n)$, the space $\mathcal{H}^{2n}$ of all spherical harmonics decomposes into a direct sum of its $\OO(2n)$-irreducible subspaces $\mathcal{H}_k^{2n}$, where $\mathcal{H}_k^{2n}$ consists of all restrictions of $k$-homogeneous harmonic polynomials. As the unitary group $\UU(n)$ is a subrepresentation of $\OO(2n)$, we can further decompose $\mathcal{H}_k^{2n}$ into its $\UU(n)$-irreducible components $\mathcal{H}_{l,m}^{2n}$, $l+m=k$, consisting of spherical harmonics of bi-degree $(l,m)$. Here, a spherical harmonic $Y$ has bi-degree $(l,m) \in \NN\times\NN$, if $Y(cu) = c^l \overline{c}^m Y(u)$, $u \in \unitsurf{2n-1}, c\in\unitsurf{1}$.

Note that the decomposition of $\mathcal{H}^{2n}$ into the spaces $\mathcal{H}_{l,m}^{2n}$ is orthogonal with respect to the standard $L_2$-inner product. We denote the orthogonal projection from $L_2(\unitsurf{2n-1})$ (or measures on $\unitsurf{2n-1}$) onto $\mathcal{H}_{l,m}^{2n}$ by $\pi_{l,m}$. Then every $f \in L_2(\unitsurf{2n-1})$ is uniquely determined by its harmonic components $\pi_{l,m} f$, $l,m \in \NN$.

\medskip

Next, a transform $T: C(\unitsurf{2n-1}) \to C(\unitsurf{2n-1})$ is called a \emph{multiplier transform}, if there exist $\lambda_{k,l}[T] \in \CC$, $k,l \in \NN$, called multipliers, such that
\begin{align*}
	\pi_{k,l} (Tf) = \lambda_{k,l}[T] \pi_{k,l} f, \quad \forall f \in C(\unitsurf{2n-1}).
\end{align*}
Clearly, $T$ is injective if and only if all multipliers are non-zero. Moreover, two multiplier transforms always commute. The operator $\JOp{C}{p}$ and the spherical Fourier transform are examples of multiplier transforms.

\begin{example}\hfill
\begin{enumerate}[label=\roman*)]
\item The multipliers of $\JOp{C}{p}$ were computed by the authors in \cite[Prop.~3.4]{Ellmeyer2023} by the complex Funk--Hecke theorem (see \cite{Quinto1987}): Suppose that $\{0\} \neq C \in \convexbodies(\CC)$ contains the origin in its relative interior, and let $p > -\dim C$ be nonzero. Then the multipliers of the transform $\JOp{C}{p}$ on $\unitsurf{2n-1}$ are given for $k,l \in \NN$ by
 	\begin{align}\label{eq:multJOp}
 		 \lambda_{k,l}[\JOp{C}{p}] = \begin{cases}
                               c_0[h_C^p] 2\alpha_{k,l}^{(n,p)}, & k = l,\\
                               c_{l-k}[h_C^p] \alpha_{k,l}^{(n,p)}, & k \neq l,
                              \end{cases}
 	\end{align}
 where
	\begin{align*}
		\alpha_{k,l}^{(n,p)} = \pi^n\frac{\Gamma\left(\frac{p+k-l}{2}+1\right)\Gamma\left(\frac{p-k+l}{2}+1\right)}{\Gamma\left(\frac{p+k+l}{2}+n\right)\Gamma\left(\frac{p-k-l}{2}+1\right)},
	\end{align*}
and $c_m[f]$ denotes the $m$-th Fourier coefficient of $f \in C(\unitsurf{1})$, i.e.
	\begin{align*}
		c_m[f]=\begin{cases}
                               \frac{1}{2\pi} \int_{\unitsurf{1}}f(c) dc, & m = 0,\\
                               \frac{1}{\pi}\int_{\unitsurf{1}} f(c)c^{m} dc, & m \neq 0.
                              \end{cases}
	\end{align*}
\item The spherical Fourier transform $\sphericalFourier{p}$ of degree $p$ on $\unitsurf{2n-1}$, $-2n<p <0$, was defined in the introduction by $\sphericalFourier{p} \varphi = (\Fourier \varphi_p)|_{\unitsurf{2n-1}}$, where $\varphi_p$ is the $p$-homoge\-neous extension of an even function $\varphi \in C^\infty(\unitsurf{2n-1})$. Note that, for this definition, it is important that $\Fourier \varphi_p$ is again a smooth function (see \cite{Goodey2009}, it is homogeneous of degree $-2n-p$) so that $\sphericalFourier{p}$ defines a linear operator on even smooth functions. It can be extended to even distributions using that $\sphericalFourier{p}$ is self-adjoint.

As $\sphericalFourier{p}$ intertwines the $\OO(2n)$-action on smooth functions, it acts as a multiplier transform on the spaces $\mathcal{H}_{k,l}^{2n}$, where $k+l$ is even. The multipliers were computed in \cite[Lem.~3.4]{Goodey2009} (however, using a different parametrization),
	\begin{align}\label{eq:FpMult}
		\lambda_{k,l}\left[\sphericalFourier{p}\right]=(-1)^{\frac{k+l}{2}}2^{2n+p}\pi^{n}\frac{\GAMMA{\frac{k+l+p}{2} + n}}{\GAMMA{\frac{k+l-p}{2}}}.
	\end{align}
This formula extends analytically to all $p \in \CC$ which are not even integers. Moreover, we can deduce from \eqref{eq:FpMult} that the inverse of $\sphericalFourier{p}$ is given by
\begin{align}\label{eq:FourierInverse}
	\sphericalFourier{-2n-p}\sphericalFourier{p}\varphi=(2\pi)^n \varphi.
\end{align}
\end{enumerate}
\end{example}

\medskip

Combining \eqref{eq:multJOp} and \eqref{eq:FpMult}, we deduce that \eqref{eq:JopBySphFour} holds, if the integral transform defined by $\nu_{C,p}$ has multipliers $4\pi^2\lambda_{k,l}[\JOp{C}{p}]/\lambda_{k,l}[\sphericalFourier{p}]$. It remains to find a measure $\nu_{C,p}$ with this property and an appropriate sign. For this reason, we first describe the multipliers of operators $\TT{\mu}$ of the form
\begin{align}\label{eq:operatorTmu}
	\TT{\mu}f(u)=\int_{\unitsurf{1}} f(cu) d\mu(c), \quad u \in \unitsurf{2n-1}, f \in C(\unitsurf{2n-1}),
\end{align}
where $\mu$ is a finite Borel measure on $\unitsurf{1}$. In the following, we denote by $\innerProduct{\cdot, \cdot}$ the complex $L_2$-inner product resp.\ dual pairing on $\unitsurf{2n-1}$.
\begin{lemma}\label{lem:Tmult}
Let $\mu$ be a finite real Borel measure on $\unitsurf{1}$ and let $f,g \in C(\unitsurf{2n-1})$.
\begin{enumerate}[label=\roman*)]
\item\label{it:Tmult1} Then $\left\langle \TT{\mu}f, g \right\rangle=\left\langle f,\TT{\mu^{\#}}g \right\rangle$, where $\mu^{\#}$ is the push-forward measure of $\mu$ by complex conjugation.
\item\label{it:Tmult2} The multipliers $\lambda_{k,l}[T_\mu]$, $k,l \in \NN$, of $\TT{\mu}$ are given by 
	\begin{align*}
		\lambda_{k,l}[\TT{\mu}]=\begin{cases}
2\pi c_{0}[\mu] \quad &k=l,\\
\pi c_{l-k}[\mu] \quad &k\neq l.
		\end{cases}
	\end{align*}
In particular, $\lambda_{k,l}[\TT{\mu^{\#}}]=\pi c_{k-l}[\mu]$ for $k\neq l \in \NN$.
\end{enumerate}
 \end{lemma}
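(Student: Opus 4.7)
For part \emph{i)}, the plan is direct Fubini plus a change of variables. I would expand
\[
\innerProduct{\TT{\mu}f, g} = \int_{\unitsurf{2n-1}}\!\int_{\unitsurf{1}} f(cu)\, d\mu(c)\, \overline{g(u)}\, du,
\]
swap the order of integration, and for each fixed $c \in \unitsurf{1}$ substitute $v = cu$, using that multiplication by $c$ is an isometry of $\unitsurf{2n-1}$ so the spherical measure is preserved. This moves the $c$-dependence onto $g$, giving $\overline{g(\bar c v)}$ in the integrand. Since $\mu$ is real, the conjugation can be pulled outside the $\mu$-integral. Recognising the inner integral as $\int g(\bar c v)\, d\mu(c) = \int g(c' v)\, d\mu^{\#}(c')$ by definition of the push-forward under conjugation, we obtain $\innerProduct{f, \TT{\mu^{\#}} g}$.

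For part \emph{ii)}, the point is that $\TT{\mu}$ commutes with the $\UU(n)$-action on $\unitsurf{2n-1}$ (since $\mu$ acts by scalar multiplications from $\unitsurf{1}$, which commute with unitaries), hence preserves each irreducible component $\mathcal{H}_{k,l}^{2n}$. So $\TT{\mu}$ is a multiplier transform, and it suffices to evaluate $\TT{\mu}Y$ on an arbitrary $Y \in \mathcal{H}_{k,l}^{2n}$. Using the bi-degree identity $Y(cu) = c^k \bar c^l Y(u)$ and the fact that $\bar c = c^{-1}$ on $\unitsurf{1}$, this gives
\[
(\TT{\mu}Y)(u) = Y(u) \int_{\unitsurf{1}} c^{k-l}\, d\mu(c).
\]
For $k = l$ the integral equals $\mu(\unitsurf{1}) = 2\pi c_0[\mu]$ by the stated normalisation of $c_0$. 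For $k \neq l$, the integral matches $\pi c_{l-k}[\mu]$ directly from the definition of the Fourier coefficient.

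The particular claim about $\mu^{\#}$ then follows formally by applying the formula already proved to $\mu^{\#}$ in place of $\mu$: the push-forward under conjugation sends $c^{m}$ to $\bar c^{m} = c^{-m}$, so each $c_{l-k}[\mu^{\#}]$ equals $c_{k-l}[\mu]$, yielding $\lambda_{k,l}[\TT{\mu^{\#}}] = \pi c_{k-l}[\mu]$. Alternatively, one can combine part \emph{i)} with the general relation between multipliers of an operator and its $L_2$-adjoint. There is no real obstacle in either direction; the only care required is bookkeeping of the bi-degree conventions and the (slightly non-standard) $\pi$ versus $2\pi$ normalisation of the Fourier coefficients, and verifying that Fubini applies to the finite product measure $du \otimes d\mu$, which is immediate since $f, g$ are continuous on a compact set and $\mu$ is finite.
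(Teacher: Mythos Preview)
Your proposal is correct and follows essentially the same route as the paper: part~\emph{i)} is proved by Fubini and the substitution $v=cu$ exactly as you outline, and for part~\emph{ii)} the paper likewise reduces to evaluating the action on a single $Y_{k,l}\in\mathcal{H}_{k,l}^{2n}$ via the bi-degree identity, only passing through the adjoint $\TT{\mu^{\#}}$ (using part~\emph{i)}) rather than applying $\TT{\mu}$ to $Y_{k,l}$ directly as you do. The two computations are equivalent; just be careful that $\int_{\unitsurf{1}} c^{k-l}\,d\mu(c)$ matches $\pi c_{k-l}[\mu]$ under the stated convention $c_m[\mu]=\tfrac{1}{\pi}\int c^m\,d\mu$, so double-check the sign of the index against the bi-degree convention when you write this up.
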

Let us note that Lemma~\ref{lem:Tmult} generalizes a result from \cite{Abardia2015} for $\mu=S_C$, $C \in \convexbodies(\CC)$. By \ref{it:Tmult1}, $\TT{\mu}$ extends to measures.
\begin{proof}
\ref{it:Tmult1} follows by a direct computation
	\begin{align*}
		&\left\langle \TT{\mu}f, g \right\rangle=\int_{\unitsurf{2n-1}}\int_{\unitsurf{1}}f(cu)d\mu(c) \overline{g(u)} du=\int_{\unitsurf{2n-1}}f(u)\int_{\unitsurf{1}}\overline{g(\overline{c}u)}d\mu(c)  du\\
		=&\int_{\unitsurf{2n-1}}f(u)\overline{\int_{\unitsurf{1}}g(\overline{c}u)d\mu(c)}  du=\left\langle f, \TT{\mu^{\#}}g \right\rangle.
	\end{align*}

For \ref{it:Tmult2}, we use the $\UU(n)$-invariance of the spherical Lebesgue measure and Fubini's theorem, to obtain for $Y_{k,l}\in \mathcal{H}_{k,l}^{2n}$, $k \neq l$,
	\begin{align*}
		\left\langle Y_{k,l},\TT{\mu}f \right\rangle&=\left\langle \TT{\mu^{\#}}Y_{k,l},f \right\rangle=
\int_{\unitsurf{2n-1}} \int_{\unitsurf{1}} Y_{k,l}(\overline{c}u) d\mu(c) \overline{f(u)} du\\
&=\int_{\unitsurf{2n-1}} \int_{\unitsurf{1}} \overline{c}^{k-l}Y_{k,l}(u) d\mu(c) \overline{f(u)} du = \pi c_{l-k}[\mu]\int_{\unitsurf{2n-1}} Y_{k,l}(u)\overline{f(u)} du \\
&=\pi c_{l-k}[\mu]\left\langle Y_{k,l},f \right\rangle,
	\end{align*}
which yields the claim in the case $k \neq l$. The case $k=l$ is similar.
\end{proof}
%

\medskip

To find $\nu_{C,p}$, we will use the notion of \emph{embedding into $L_p$}.
\begin{definition}\cite[Def.~6.14]{Koldobsky2005}\label{def:embedIsomLpNeg}
Suppose that $K \in \starbodiesO (\RR^n)$ is origin-symmetric. Then the space $(\RR^n, \|\cdot \|_{K})$ is said to \emph{embed in $L_p$}
\begin{enumerate}[label=\roman*)]
\item  for $-n<p<0$, if there exists a finite Borel measure~$\mu $ on $\unitsurf{n-1}$ such that
	\begin{align}\label{eq:embedIsomLpNeg}
		\int _{\RR^{n}} \|x\|_{K}^{p} \phi (x) dx = \int _{\unitsurf{n-1}}\left ( \int _{0}^{\infty }r^{-p-1} ({\bf{\mathrm{F}}}\phi)(ru) dr \right ) d\mu (u),
	\end{align}
for every even Schwartz function $\phi $ on $\RR^n$ and
\item for $p>0$ that is not an even integer, if there exists a finite Borel measure~$\mu $ on $\unitsurf{n-1}$ such that
	\begin{align}
		\int _{\RR^n} \|x\|_{K}^{p} \phi (x) dx = \frac{1}{\GAMMA{-\frac p2}}\int _{\unitsurf{n-1}}\left ( \int _{0}^{\infty }r^{-p-1} ({\bf{\mathrm{F}}}\phi)(ru) dr \right ) d\mu (u),
	\end{align}
for every even Schwartz function $\phi $ on $\RR^n$, whose Fourier transform is supported outside of the origin.
\end{enumerate}
We denote the set of all star bodies that embed into $L_p$ by $\{\hookrightarrow L_p\}$.
\end{definition}

An alternative characterization of when a body embeds into $L_p$ was given in \cite{Koldobsky1992}, see also \cite{Koldobsky2005}*{Thm.~6.10, Thm.~6.15} and \cite{Rubin2008}*{Thm.~5.2, Prop.~5.4} (formulated in terms of the spherical Fourier transform used here).

\begin{proposition}\label{prop:embedLp}
Let $K\in \starbodiesO(\RR^n)$ be origin-symmetric and let $-n<p$ be non-zero. Then $K$ embeds into $L_p$ if and only if
	\begin{align*}
		\frac{1}{\GAMMA{-\frac p2}}\sphericalFourier{p}\rho_K^{-p} \geq 0,
	\end{align*}
in the sense of distributions. In particular, $\frac{1}{\GAMMA{-\frac p2}}\sphericalFourier{p}\rho_K^{-p}$ can be represented by a positive measure.
\end{proposition}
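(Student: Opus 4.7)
My plan is three-step: (i) rewrite the defining condition of embeddability as positivity of the distributional Fourier transform $\Fourier(\|\cdot\|_K^p)$ on $\RR^n\setminus\{0\}$; (ii) use homogeneity to transfer this positivity to the sphere, i.e.\ to $\sphericalFourier{p}\rho_K^{-p}$; (iii) invoke Schwartz's theorem that non-negative distributions are positive measures for the `in particular' clause.

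For step (i), since $K$ is origin-symmetric and $p>-n$, the function $\|\cdot\|_K^p$ is an even tempered distribution, so $\Fourier(\|\cdot\|_K^p)$ is tempered and homogeneous of degree $-n-p$. The right-hand side of \eqref{eq:embedIsomLpNeg} is exactly the pairing of $\Fourier\phi$ against the tempered distribution $T_\mu$ on $\RR^n$ whose polar decomposition is $r^{-p-n}\,d\mu(u)$. Combining Fourier duality $\langle\Fourier T,\phi\rangle=\langle T,\Fourier\phi\rangle$ with the inversion identity $\Fourier(\Fourier\phi)=(2\pi)^n\phi$ for even Schwartz $\phi$, the condition in \Cref{def:embedIsomLpNeg} becomes the distributional identity
\begin{align*}
\Fourier(\|\cdot\|_K^p)=\frac{(2\pi)^n}{\GAMMA{-p/2}}\,T_\mu
\end{align*}
on $\RR^n\setminus\{0\}$, where for $-n<p<0$ the factor $\GAMMA{-p/2}>0$ is just a positive constant, and for $p>0$ the identity is only required when tested against Schwartz functions whose Fourier transform vanishes near the origin. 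Hence $K$ embeds into $L_p$ precisely when $\GAMMA{-p/2}^{-1}\Fourier(\|\cdot\|_K^p)$ is a non-negative tempered distribution on $\RR^n\setminus\{0\}$.

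For steps (ii) and (iii), the $p$-homogeneous extension of $\rho_K^{-p}$ is by definition $\|\cdot\|_K^p$, so $\sphericalFourier{p}\rho_K^{-p}=\Fourier(\|\cdot\|_K^p)\vert_{\unitsurf{n-1}}$. By homogeneity, non-negativity of $\Fourier(\|\cdot\|_K^p)$ on $\RR^n\setminus\{0\}$ is equivalent to non-negativity of its restriction to the sphere. Schwartz's theorem then represents this non-negative distribution on the compact manifold $\unitsurf{n-1}$ as a finite positive Borel measure, which simultaneously yields the equivalence and the `in particular' statement.

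The main technical subtlety is the case $p>0$: here $\|\cdot\|_K^p$ grows at infinity, so $\Fourier(\|\cdot\|_K^p)$ carries a singular contribution at the origin (derivatives of $\delta_0$, in the classical theory of homogeneous distributions), and the factor $\GAMMA{-p/2}^{-1}$ must be justified by analytic continuation in $p$ (as in the Riesz-kernel normalization). The restriction to test functions whose Fourier transform vanishes near the origin in \Cref{def:embedIsomLpNeg} is precisely what removes this singular part, after which the previous argument applies verbatim. The case $-n<p<0$ is by contrast direct since $\Fourier(\|\cdot\|_K^p)$ is locally integrable there and $\GAMMA{-p/2}>0$.
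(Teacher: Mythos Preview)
The paper does not actually prove this proposition: it is quoted from the literature, with the sentence immediately preceding the statement giving the precise references (Koldobsky~1992, Koldobsky~2005 Thm.~6.10 and~6.15, Rubin~2008 Thm.~5.2 and Prop.~5.4). Your three-step outline is exactly the standard argument found in those sources, so there is no methodological divergence to discuss---you are supplying what the paper outsources.

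Two small remarks on the write-up. First, your unified identity $\Fourier(\|\cdot\|_K^p)=(2\pi)^n\Gamma(-p/2)^{-1}T_\mu$ literally matches only the $p>0$ clause of \Cref{def:embedIsomLpNeg}; for $-n<p<0$ there is no $\Gamma$-factor in the definition, and (as you note) it must be absorbed into the unspecified $\mu$. Second, for non-smooth $\rho_K^{-p}$ the spherical Fourier transform is defined in the paper by duality via the self-adjointness of $\sphericalFourier{p}$, not as a pointwise restriction; so step~(ii) needs one line identifying the spherical distribution $\sphericalFourier{p}\rho_K^{-p}$ with the angular part of the homogeneous distribution $\Fourier(\|\cdot\|_K^p)$ on $\RR^n\setminus\{0\}$, and checking that positivity passes through this identification in both directions (the backward direction also uses that a non-negative homogeneous distribution of degree $-n-p$ has polar form $r^{-n-p}dr\,d\mu(u)$ with $\mu$ a finite measure on $\unitsurf{n-1}$). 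These are routine, but would have to be written out for a self-contained proof.
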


We can now use that every suitable origin-symmetric body $C \subset \CC$ embeds into $L_p$ and, hence, by Proposition~\ref{prop:embedLp}, we obtain a measure with the correct sign.
\begin{proposition}\label{prop:FphCmeasure}
Let $C \in \convexbodies(\CC)$ be origin-symmetric containing the origin in its interior, and $-2<p\leq 1$. Then the distribution $\nu_{C,p} = \sphericalFourier{p}h_C^p$ on $\unitsurf{1}$ satisfies
	\begin{align*}
		\frac{1}{\GAMMA{-\frac p2}}\nu_{C,p} \geq 0,
	\end{align*}
and therefore is a measure. The multipliers of $\TT{\nu_{C,p}}$ are given by
\begin{align}\label{eq:multTnuCp}
	\lambda_{k,l}[\TT{\nu_{C,p}}]=\begin{cases}
		2\pi k_p\GAMMA{\frac{p}{2}+1}^2c_0[h_C^p] , & k=l,\\
		\pi k_p\GAMMA{\frac{p+l-k}{2}+1}\GAMMA{\frac{p-l+k}{2}+1}c_{l-k}[h_C^p], & k\neq l,
	\end{cases}
\end{align}
where $k_p=-2^{2+p}\sin\left(\frac{p\pi}{2}\right)$.

For $p=1$, we have $\nu_{C,1}=-2\pi \mathcal{S}(iC, \cdot)$ and for $p=-1$, we have $d\nu_{C,-1}= 2\pi\rho_{iC^\circ} dc$.
	
\end{proposition}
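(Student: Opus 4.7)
The plan is to verify three assertions: positivity of $\nu_{C,p}/\GAMMA{-\frac{p}{2}}$ (which implies $\nu_{C,p}$ is a measure of sign $-p$), the multiplier formula \eqref{eq:multTnuCp}, and the identifications of $\nu_{C,p}$ for $p = \pm 1$. The overarching observation is that on $\unitsurf{1}$ both $\sphericalFourier{p}$ and every $\TT{\mu}$ act as scalar multipliers on each one-dimensional space $\mathcal{H}_{k,l}^{2}$, so the various identities boil down to elementary manipulations of gamma functions.

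For the positivity, I would apply Proposition~\ref{prop:embedLp} with $K = C^\circ \subset \CC$, rewriting $\nu_{C,p}/\GAMMA{-\frac{p}{2}} = \sphericalFourier{p}\rho_{C^\circ}^{-p}/\GAMMA{-\frac{p}{2}}$ via $h_C^p = \rho_{C^\circ}^{-p}$. The required positivity is then equivalent to $C^\circ$ embedding into $L_p$, and in the plane this holds for every admissible $p \in (-2,0) \cup (0,1]$: for $0 < p \leq 1$ it is the classical isometric embedding of every two-dimensional normed space into $L_p$, and for $-2 < p < 0$ the analogous statement for planar star bodies is well-known (see, e.g., \cite{Koldobsky2005}).

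The multiplier formula is a direct computation. On $\unitsurf{1}$ the only non-trivial irreducible pieces $\mathcal{H}_{k,l}^{2}$ are the one-dimensional spaces with $kl = 0$, and since $h_C^p$ is even only $M = |k-l|$ even contribute. Applying \eqref{eq:FpMult} with $n = 1$ and the reflection formula $\Gamma(z)\Gamma(1-z) = \pi/\sin(\pi z)$ with $z = \frac{M}{2} - \frac{p}{2}$ transforms the quotient of gamma functions in \eqref{eq:FpMult} into the symmetric product $k_p\GAMMA{\frac{p+M}{2} + 1}\GAMMA{\frac{p-M}{2} + 1}$. Expanding $h_C^p$ in its Fourier series on $\unitsurf{1}$ and combining with Lemma~\ref{lem:Tmult}\ref{it:Tmult2} then yields the stated multipliers of $\TT{\nu_{C,p}}$, once one notes that $\GAMMA{\frac{p+l-k}{2}+1}\GAMMA{\frac{p-l+k}{2}+1}$ is symmetric under $l-k \leftrightarrow k-l$.

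For the endpoint identifications, I would match multipliers on both sides. When $p = 1$, the planar identity $\widehat{\mathcal{S}(K,\cdot)}(m) = (1-m^2)\widehat{h_K}(m)$ (holding distributionally, or pointwise for smooth $K$) together with $h_{iC}(e^{i\theta}) = h_C(e^{i(\theta - \pi/2)})$ gives $c_m[\mathcal{S}(iC,\cdot)] = (1-m^2)\,i^m\, c_m[h_C]$; the reflection-formula identity $\Gamma(\frac{1+m}{2})\Gamma(\frac{1-m}{2}) = \pi (-1)^{m/2}$ for even $m$, combined with the recurrence $\Gamma(z+1) = z\Gamma(z)$, then verifies that the multipliers of $\TT{-2\pi \mathcal{S}(iC,\cdot)}$ and $\TT{\nu_{C,1}}$ agree. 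The case $p = -1$ is analogous, comparing $\sphericalFourier{-1}\rho_{C^\circ}$ with $2\pi \rho_{iC^\circ}\, dc$ via the change of variables $c_m[\rho_{iC^\circ}] = i^m c_m[\rho_{C^\circ}]$. I expect the main obstacle to be the positivity statement, since it is the only place where one invokes an external embedding theorem rather than an explicit computation; the multiplier identities themselves, while demanding careful bookkeeping of normalizations and parities, are entirely mechanical.
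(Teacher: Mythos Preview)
Your proposal is correct and follows essentially the same route as the paper: positivity via the identification $h_C^p=\rho_{C^\circ}^{-p}$ together with Proposition~\ref{prop:embedLp} and the planar embedding results in \cite{Koldobsky2005}, the multiplier formula via \eqref{eq:FpMult} for $n=1$ combined with Euler's reflection formula and Lemma~\ref{lem:Tmult}, and the endpoint identifications via the Fourier coefficients of $\mathcal{S}(C,\cdot)$ in terms of those of $h_C$ (which is precisely \cite{Abardia2015}*{Lem.~4.6}). Your handling of the $iC$ rotation in the $p=1$ case is slightly more explicit than the paper's, but the argument is the same.
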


\begin{proof}
By \cite{Koldobsky2005}*{Cor.~6.7\&6.8, Thm.~6.17}, every origin-symmetric convex body $C$ with non-empty interior embeds into $L_p$ for every non-zero $-2<p\leq 1$. Now, \Cref{prop:embedLp} implies that $\sphericalFourier{p}h_C^p=\sphericalFourier{p}\rho_{C^\circ}^{-p}$ is up to the sign of $\GAMMA{-\frac p2}$ a positive and even measure, yielding the first claim.

Next, noting that $\mathcal{H}_k^2$ is spanned by the maps $c \mapsto c^k, c^{-k}$, we have for even $k \neq 0$
\begin{align*}
	\pi c_k[\sphericalFourier{p}h_C^p] = \langle \pi_{k,0} \sphericalFourier{p}h_C^p, c^k \rangle = \lambda_{k,0}[\sphericalFourier{p}] \langle \pi_{k,0} h_C^p, c^k \rangle = \pi \lambda_{k,0}[\sphericalFourier{p}] c_k[h_C^p],
\end{align*}
and thus, by \eqref{eq:FpMult} and a similar computation for $k=0$,
\begin{align}\label{eq:prfDefNucpMult}
	c_k[\sphericalFourier{p}h_C^p] = \lambda_{k,0}[\sphericalFourier{p}] c_k[h_C^p] = (-1)^{\frac{k}{2}}2^{2+p}\pi\frac{\GAMMA{\frac{k+p}{2} + 1}}{\GAMMA{\frac{k-p}{2}}} c_k[h_C^p].
\end{align}
Applying Euler's reflection formula, we obtain for even $k$,
\begin{align*}
	c_k[\sphericalFourier{p}h_C^p]=-2^{2+p}\sin\left(\frac{p\pi}{2}\right)\GAMMA{\frac{p+k}{2}+1}\GAMMA{\frac{p-k}{2}+1}c_k[h_C^p],
\end{align*}
which, by Lemma~\ref{lem:Tmult}, yields the second claim as the multipliers for $k+l$ odd vanish.

For $p=1$ and even $k$, \eqref{eq:prfDefNucpMult} reduces to
\begin{align*}
	c_{k}[\nu_{C,1}]=(-1)^{\frac k2}8\pi\frac{\GAMMA{\frac{k+3}{2}}}{\GAMMA{\frac{k-1}{2}}}c_k[h_C]=2\pi(-1)^{\frac k2} (k^2-1)c_k[h_C].
\end{align*}
As, by \cite{Abardia2015}*{Lem.~4.6}, the Fourier coefficients of $S_{C}(\cdot)$ are given by $(1-k^2)c_k[h_C]$, this yields the claim for $p=1$. The statement for $p=-1$ follows directly.
\end{proof}

\begin{corollary}\label{cor:adjCkonjcompl}
	Let $C \in \convexbodies(\CC)$ be origin-symmetric, $0 \in \interior C$, and $-2<p\leq 1$ non-zero. Then $\nu_{\overline{C},p} = \nu_{C,p}^\#$ and, thus, $\TT{\nu_{\overline{C},p}}$ and $\TT{\nu_{C,p}}$ are adjoint.
\end{corollary}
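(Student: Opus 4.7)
The plan is to reduce the first claim $\nu_{\overline{C},p}=\nu_{C,p}^{\#}$ to the $\OO(2)$-equivariance of the spherical Fourier transform on $\unitsurf{1}$, and then deduce the adjointness immediately from Lemma~\ref{lem:Tmult}~\ref{it:Tmult1}.

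First, I would establish the elementary identity $h_{\overline{C}}(w)=h_{C}(\overline{w})$ for all $w\in\CC$. Writing the real inner product on $\CC\cong\RR^{2}$ as $\langle z,w\rangle = \Re(z\overline{w})$, one computes
\begin{align*}
h_{\overline{C}}(w)=\sup_{z\in C}\Re(\overline{z}\cdot \overline{w})=\sup_{z\in C}\Re(z\cdot w)=h_{C}(\overline{w}),
\end{align*}
so that $h_{\overline{C}}^{p}=h_{C}^{p}\circ\kappa$, where $\kappa(c)=\overline{c}$ denotes complex conjugation on $\unitsurf{1}$.

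Next, since $\kappa$ is an orthogonal transformation of $\CC=\RR^{2}$ and the Euclidean Fourier transform is $\OO(2)$-equivariant, its restriction $\sphericalFourier{p}$ on $\unitsurf{1}$ inherits the same property: for every even $\varphi\in C^{\infty}(\unitsurf{1})$, one has $\sphericalFourier{p}(\varphi\circ\kappa)=(\sphericalFourier{p}\varphi)\circ\kappa$, where $\circ\,\kappa$ acts on distributions by push-forward under $\kappa$. Applying this to $\varphi=h_{C}^{p}$ and using the definition $\nu_{C,p}=\sphericalFourier{p}h_{C}^{p}$ from Proposition~\ref{prop:FphCmeasure} together with the step above gives
\begin{align*}
\nu_{\overline{C},p}=\sphericalFourier{p}h_{\overline{C}}^{p}=\sphericalFourier{p}(h_{C}^{p}\circ\kappa)=(\sphericalFourier{p}h_{C}^{p})\circ\kappa=\nu_{C,p}^{\#},
\end{align*}
which is the first assertion. (If one prefers a purely computational route, the same identity can be read off from Proposition~\ref{prop:FphCmeasure} by observing that $c_{m}[h_{\overline{C}}^{p}]=c_{-m}[h_{C}^{p}]$ and that the prefactors in \eqref{eq:multTnuCp} are symmetric in $k\leftrightarrow l$, so that the Fourier coefficients of $\nu_{\overline{C},p}$ and $\nu_{C,p}^{\#}$ match.)

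For the adjointness statement, I would simply invoke Lemma~\ref{lem:Tmult}~\ref{it:Tmult1} with $\mu=\nu_{C,p}$: since $\nu_{C,p}$ is a real (signed) measure by Proposition~\ref{prop:FphCmeasure}, one obtains $\langle \TT{\nu_{C,p}}f,g\rangle=\langle f,\TT{\nu_{C,p}^{\#}}g\rangle=\langle f,\TT{\nu_{\overline{C},p}}g\rangle$ for all $f,g\in C(\unitsurf{2n-1})$. There is no real obstacle here; the only subtlety is keeping straight that $\sphericalFourier{p}$ is viewed as a distribution on $\unitsurf{1}$ and that its $\OO(2)$-equivariance is inherited from the Euclidean Fourier transform, which I would state explicitly to justify the formal manipulation above.
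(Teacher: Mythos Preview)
Your proof is correct. The paper takes the purely computational route you mention parenthetically: it checks that $c_k[\mu^{\#}]=\overline{c_k[\mu]}$ for real measures and $c_k[h_{\overline{C}}^{p}]=\overline{c_k[h_C^{p}]}$, and then matches Fourier coefficients via \eqref{eq:prfDefNucpMult}. Your primary argument instead derives $\nu_{\overline{C},p}=\nu_{C,p}^{\#}$ from the $\OO(2)$-equivariance of $\sphericalFourier{p}$ together with the identity $h_{\overline{C}}=h_C\circ\kappa$; this is a more conceptual (and arguably cleaner) justification that avoids any explicit coefficient manipulation, while the paper's version has the advantage of needing nothing beyond the formulas already established in Proposition~\ref{prop:FphCmeasure}. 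The adjointness step via Lemma~\ref{lem:Tmult}\ref{it:Tmult1} is identical in both.
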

\begin{proof}
	As $c_k[\mu^\#] = \overline{c_k[\mu]}$ for a real measure $\mu$, and $c_k[h_{\overline{C}}^p] = \overline{c_k[h_C^p]}$, the first claim follows from \eqref{eq:prfDefNucpMult}. The second claim follows from Lemma~\ref{lem:Tmult}\ref{it:Tmult1}.
\end{proof}

We are now ready to prove Theorem~\ref{mthm:JOp}.
\begin{proof}[Proof of Theorem~\ref{mthm:JOp}] 
	By the origin-symmetry of $C$, $c_{l-k}[h_C^p]=0$ when $k+l$ is odd, and thus $\lambda_{k,l}[\JOp{C}{p}] = 0$, that is, we only have to consider $k+l$ even. By \eqref{eq:multJOp} and \eqref{eq:FpMult}, we obtain using $k_p = -2^{2+p}\sin(p\pi/2)$
	\begin{align*}
		\lambda_{k,l}[\JOp{C}{p}]=\begin{cases}
			\lambda_{k,l}[\sphericalFourier{-2n-p}] \frac{k_p}{2\pi}\GAMMA{\frac p2 +1}^2 c_0[h_C^p] , & k = l,\\
			\lambda_{k,l}[\sphericalFourier{-2n-p}] \frac{k_p}{4\pi}\GAMMA{\frac {p+k-l}{2} +1}\GAMMA{\frac {p-k+l}{2} +1}c_{l-k}[h_C^p], & k\neq l.
		\end{cases}
	\end{align*}
	Consequently, by \eqref{eq:multTnuCp}, $(2\pi)^2\lambda_{k,l}[\JOp{C}{p}] = \lambda_{k,l}[\sphericalFourier{-2n-p}]\lambda_{k,l}[T_{\nu_{C,p}}]$, yielding the claim.
\end{proof}

\medskip

Let us point out that the spherical Fourier transform maps smooth even functions to smooth even functions on $\unitsurf{2n-1}$. However, $\sphericalFourier{p} f$ for $f \in C(\unitsurf{2n-1})$, defined in the distributional sense, in general only yields a distribution. As we want to apply Theorem~\ref{mthm:JOp} in the following for radial functions of general star resp. convex bodies, we need to argue that, in this case, $\TT{\nu_{C,p}}\circ \sphericalFourier{-2n-p}$ maps continuous functions to continuous functions.
\begin{proposition}\label{prop:repIcpPicByFourier}
	Let $C \in \convexbodies(\CC)$ be origin-symmetric containing the origin in its interior, and $-2<p\leq 1$ non-zero. Then for $\nu_{C,p} = \sphericalFourier{p}h_C^p$ we have
	\begin{align*}
		\rho_{\CIntBody{C}{p} K}^{-p}=\frac{1}{(2\pi)^2(2n+p)} \TT{\nu_{C,p}}\circ\sphericalFourier{-2n-p}\rho_{K}^{2n+p},
	\end{align*}
	for every $K \in \starbodiesO(\CC^n)$ and
	\begin{align*}
		h_{\CompProj_CK}=-\frac{1}{4\pi} \TT{\surfArea{iC}}\circ\sphericalFourier{-2n-1}\surfArea{K},
	\end{align*}
	for every $K \in \convexbodies(\CC^n)$ with non-empty interior.
\end{proposition}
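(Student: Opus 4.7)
The plan is to combine Theorem~\ref{mthm:JOp} with the elementary representations of $\IntBody_{C,p}$ and $\Pi_C$ as applications of $\JOp{C}{p}$, and then read off the stated scalar factors using the explicit formulas for $\nu_{C,p}$ from Proposition~\ref{prop:FphCmeasure}.

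For the intersection body identity, I would first introduce polar coordinates in the defining integral \eqref{eq:defLpCIntBody} to obtain
\begin{align*}
	\rho_{\IntBody_{C,p}K}(u)^{-p} = \int_{\unitsurf{2n-1}} h_C(v\cdot u)^p \int_0^{\rho_K(v)} r^{2n-1+p}\, dr\, dv = \frac{1}{2n+p}\, \JOp{C}{p}\!\left(\rho_K^{2n+p}\right)(u),
\end{align*}
which makes sense because $p > -2 \geq -\dim C$ guarantees integrability. Since $C$ is origin-symmetric, $h_C$ is even, so $\JOp{C}{p}$ only sees the even part of its argument; this lets me replace $\rho_K^{2n+p}$ by its even part without loss. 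Theorem~\ref{mthm:JOp} then gives $\JOp{C}{p} = (2\pi)^{-2}\, \TT{\nu_{C,p}} \circ \sphericalFourier{-2n-p}$ on even $C^\infty$-functions, and dividing by $2n+p$ yields the claim on the smooth, even dense subspace. The asserted formula then follows by continuity/approximation of $\rho_K^{2n+p} \in C(\unitsurf{2n-1})$ by smooth even functions, together with the fact that the left-hand side (a continuous function of $u$, as $\IntBody_{C,p}K$ is a star body) must equal the right-hand side as a distribution; since the left-hand side is continuous, the distributional right-hand side is realized by a continuous function and the identity holds pointwise.

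For the projection body identity, the defining formula \eqref{eq:defCProjBody} reads $h_{\Pi_C K}(u) = \tfrac12 \int_{\unitsurf{2n-1}} h_C(v\cdot u)\, dS_K(v)$, i.e.\ $\tfrac12\, \JOp{C}{1}(S_K)$ interpreted as the natural extension of $\JOp{C}{1}$ to the measure $S_K$. Extending Theorem~\ref{mthm:JOp} to measures via Lemma~\ref{lem:Tmult}\ref{it:Tmult1} (so that $\TT{\nu_{C,p}}$ acts on distributions by duality), I would write $h_{\Pi_C K} = \tfrac{1}{2(2\pi)^2}\, \TT{\nu_{C,1}} \circ \sphericalFourier{-2n-1}(S_K)$. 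The scalar is then adjusted using Proposition~\ref{prop:FphCmeasure}, which states $\nu_{C,1} = -2\pi\, \surfArea{iC}$; substituting gives the prefactor $\tfrac{1}{2(2\pi)^2}\cdot(-2\pi) = -\tfrac{1}{4\pi}$, as required.

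The main obstacle is the extension of the identity in Theorem~\ref{mthm:JOp} from smooth even functions to the non-smooth data $\rho_K^{2n+p}$ and $S_K$. The theorem guarantees equality as smooth functions, but in our application $\sphericalFourier{-2n-p}$ applied to merely continuous functions (or finite measures) produces only a distribution, so one must argue that composition with $\TT{\nu_{C,p}}$ returns a continuous function. The cleanest route is to approximate in the weak sense by smooth even test objects, use the continuity of $\JOp{C}{p}$ on $C(\unitsurf{2n-1})$ (resp.\ on $\mathcal{M}(\unitsurf{2n-1})$) guaranteed by the boundedness of $h_C^p$ against spherical Lebesgue measure for $p > -2$, and then invoke uniqueness of distributional limits together with the continuity of $\rho_{\IntBody_{C,p}K}^{-p}$ and $h_{\Pi_C K}$ to conclude the pointwise equalities.
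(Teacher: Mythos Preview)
Your proposal is correct and reaches the same identities via the same ingredients (the representation $\rho_{\IntBody_{C,p}K}^{-p}=\tfrac{1}{2n+p}\JOp{C}{p}\rho_K^{2n+p}$ and $h_{\Pi_C K}=\tfrac12\JOp{C}{1}S_K$, Theorem~\ref{mthm:JOp}, and the identification $\nu_{C,1}=-2\pi\,S_{iC}$ from Proposition~\ref{prop:FphCmeasure}). The only substantive difference lies in how the passage from smooth test data to continuous $\rho_K^{2n+p}$ (resp.\ the measure $S_K$) is handled. You approximate the non-smooth argument by smooth even functions and pass to the limit, using continuity of $\JOp{C}{p}$ on $C(\unitsurf{2n-1})$; the paper instead works by duality: it pairs $\rho_{\IntBody_{C,p}K}^{-p}$ with an arbitrary smooth $\varphi$, uses that $\JOp{\overline{C}}{p}$ is the $L_2$-adjoint of $\JOp{C}{p}$ to move the operator onto $\varphi$, applies Theorem~\ref{mthm:JOp} there, and then moves the operators back via Corollary~\ref{cor:adjCkonjcompl} (i.e.\ $\TT{\nu_{C,p}}$ is adjoint to $\TT{\nu_{\overline{C},p}}$) together with self-adjointness of $\sphericalFourier{-2n-p}$. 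This gives the identity distributionally, and continuity of the left-hand side upgrades it to a pointwise equality. The paper's route is slightly cleaner in that it never needs to argue about convergence of $\TT{\nu_{C,p}}\circ\sphericalFourier{-2n-p}$ along an approximating sequence; your route is equally valid but requires exactly the care you flag in your final paragraph. For $\Pi_C$, the paper additionally invokes the classical fact $-2\pi h_{\Pi K}=\sphericalFourier{-2n-1}S_K$ to see directly that $\sphericalFourier{-2n-1}S_K$ is continuous, which shortcuts the regularity discussion.
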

Before proving Proposition~\ref{prop:repIcpPicByFourier}, we note that
\begin{align}\label{eq:IcpPicByJOp}
	\rho_{\IntBody_{C,p} K}^{-p} = \frac{1}{2n+p}\JOp{C}{p}(\rho_K^{2n+p}) \quad \text{ and } \quad h_{\Pi_C K} = \frac{1}{2} \JOp{C}{1}(S_K)
\end{align}
for $K \in \starbodiesO(\CC^n)$ resp. $K \in \convexbodies(\CC^n)$ with non-empty interior. Moreover, let us point out that the order of the transforms on the right-hand side of the equations in Proposition~\ref{prop:repIcpPicByFourier} is freely interchangeable as they are all multiplier transforms.
\begin{proof}[Proof of Proposition~\ref{prop:repIcpPicByFourier}]
	First note that since $C$ is origin-symmetric, and thus $\nu_{C,p}$ is even, both sides of \eqref{eq:JopBySphFour} in Theorem~\ref{mthm:JOp} vanish for odd $\varphi \in C^\infty(\unitsurf{2n-1})$. Hence, we conclude that the statement of Theorem~\ref{mthm:JOp} holds for all $\varphi \in C^\infty(\unitsurf{2n-1})$.
	
	Next, let $\varphi \in C^\infty(\unitsurf{2n-1})$. By \eqref{eq:IcpPicByJOp}, the fact that $\JOp{\overline{C}}{p}$ is the adjoint of $\JOp{C}{p}$ and Theorem~\ref{mthm:JOp},
	\begin{align*}
		\innerProduct{\rho_{\CIntBody{C}{p} K}^{-p},\varphi}&= \frac{1}{2n+p}\innerProduct{\JOp{C}{p}\rho_K^{2n+p},\varphi}=\frac{1}{2n+p}\innerProduct{\rho_K^{2n+p},\JOp{\overline{C}}{p}\varphi} \\
		&=\frac{1}{(2\pi)^2(2n+p)}\innerProduct{\rho_K^{2n+p}, \TT{\nu_{\overline{C},p}}\circ\sphericalFourier{-2n-p}\varphi}.
	\end{align*}
	Since the involved operators are multiplier transforms, they commute and therefore
	\begin{align*}
		\innerProduct{\rho_{\CIntBody{C}{p} K}^{-p},\varphi}=\frac{1}{(2\pi)^2(2n+p)}\innerProduct{\rho_K^{2n+p}, \sphericalFourier{-2n-p}\circ\TT{\nu_{\overline{C},p}}\varphi}.
	\end{align*}
	Consequently, as $\varphi$ was arbitrary, and $\TT{\nu_{C,p}} \circ \sphericalFourier{-2n-p}$ is, by Corollary~\ref{cor:adjCkonjcompl}, the adjoint of $\sphericalFourier{-2n-p}\circ\TT{\nu_{\overline{C},p}}$, we obtain $\rho_{\CIntBody{C}{p} K}^{-p}=\frac{1}{(2\pi)^2(2n+p)} \TT{\nu_{C,p}}\circ\sphericalFourier{-2n-p}\rho_K^{2n+p}$ in the sense of distributions. However, since the left hand side is a continuous function on the sphere, the equality indeed holds as equality of continuous functions.
	
	Repeating the argument above and noting that $-2\pi h_{\Pi K} = \sphericalFourier{-2n-1}\surfArea{K}$ (\cite{Koldobsky2005}*{Thm.~8.1}) is a continuous map on $\unitsurf{2n-1}$, the second equality follows since $\nu_{C,1}=-2\pi \mathcal{S}(iC, \cdot)$, by \Cref{prop:FphCmeasure}.
\end{proof}

\section{Busemann--Petty problem for complex $L_p$-intersection bodies}\label{sec:intbodies}
In this section, we prove Theorem~\ref{mthm:stateComplBP} for the complex $L_p$-intersection body maps $\IntBody_{C,p}$ following the steps outlined in the introduction.

\subsection{\nameref{sec:detInjSet}}
First, we prove that the injectivity set $\convexbodies(\IntBody_{C,p})$ defined in \eqref{eq:InjSetIcp} is maximal in the sense that the answer to Problem~\ref{prob:BPgeneralPhi} is negative when the bodies are not in $\convexbodies(\IntBody_{C,p})$.

Before doing so, let us point out that a body $K \in \convexbodies(\CC^n)$ is in $\convexbodies(\IntBody_{C,p})$, if $\pi_{k,l}\rho_K^{2n+p} = 0$ whenever $\lambda_{k,l}[\JOp{C}{p}] = 0$ or, equivalently by \eqref{eq:multJOp}, $c_{l-k}[h_C^p] = 0$. In particular, for $K, L \in \convexbodies(\IntBody_{C,p})$, $\IntBody_{C,p}K = \IntBody_{C,p} L$ implies $K=L$. Moreover, as $c_k[h_{\overline{C}}^p] = \overline{c_k[h_C^p]}$, we directly obtain that $\convexbodies(\IntBody_{\overline{C},p}) = \convexbodies(\IntBody_{C,p})$.

In the following, we will also work with the set of $\unitsurf{1}$-invariant convex bodies with non-empty interior. As a function $f \in C(\unitsurf{2n-1})$ is $\unitsurf{1}$-invariant, if and only if $\pi_{k,l} f = 0$ for all $k \neq l$, and as $c_0[h_C^p] > 0$, all such bodies are contained in $\convexbodies(\IntBody_{C,p})$.

\bigskip

For the proof of \Cref{prop:injecset}, we will need the notion of dual mixed volume of $K,L \in \starbodiesO(\RR^m)$ which was introduced for non-zero $p\in \RR$ in \cite{Lutwak1975} as
	\begin{align*}
		\DualMixedVol{p}{K,L}=\frac{1}{m}\int_{\unitsurf{m-1}}\rho_K^{m-p}(u)\rho_L^p(u) du.
	\end{align*}
Note that we always have $\DualMixedVol{p}{K,K}=\voln_{m}(K)$. If $L \subset M$, then $p\DualMixedVol{p}{K,L} \leq p \DualMixedVol{p}{K,M}$. Moreover, the dual $L_p$-Minkowski inequality states that (see, e.g., \cite{Schneider2014}*{(9.40)}) for $0<p<m$ and $K, L \in \starbodiesO(\RR^m)$
	\begin{align*}
		\DualMixedVol{p}{K,L}\leq \Vol{m}{K}^{\frac{m-p}{m}}\Vol{m}{L}^{\frac{p}{m}},
	\end{align*}
and for $p<0$
	\begin{align*}
		\DualMixedVol{p}{K,L}\geq \Vol{m}{K}^{\frac{m-p}{m}}\Vol{m}{L}^{\frac{p}{m}}.
	\end{align*}
The inequalities can be written in the following unified way for $p<0$ or $0<p<m$,
\begin{align}\label{eq:dualLpMinkineq}
	p\DualMixedVol{p}{K,L}\leq p\Vol{m}{K}^{\frac{m-p}{m}}\Vol{m}{L}^{\frac{p}{m}}.
\end{align}
Equality holds if and only if $K$ and $L$ are dilates of each other.

\medskip

Moreover, we will use the following convexity statement for small perturbations.

\begin{lemma}[\cite{Koldobsky2005}*{p.~96}]\label{lem:convradial}
Let $L$ be a smooth convex body with strictly positive curvature and $p\in \RR\setminus\{0\}$. Then for any $\varphi \in C^\infty(\unitsurf{n-1})$, the star body $K$ defined by
	\begin{align*}
		\rho_K^{-p}=\rho_L^{-p}+\varepsilon \varphi
	\end{align*}
is convex for all $\varepsilon>0$ sufficiently small.
\end{lemma}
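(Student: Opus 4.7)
The plan is to recast convexity of a $C^\infty$ star body as a strict pointwise positivity condition on second derivatives of its radial function, and then exploit continuity under small perturbations. Concretely, for a $C^2$ star body $M\subset\RR^n$ parametrized by $u\mapsto\rho_M(u)u$, $u\in\unitsurf{n-1}$, the Gauss--Kronecker curvature at the boundary point $\rho_M(u)u$ can be written as an explicit rational expression $G(\rho_M,\nabla\rho_M,\nabla^2\rho_M)(u)$, depending smoothly on its arguments whenever $\rho_M$ is bounded away from zero; $M$ has non-negative curvature, and hence is convex, precisely when $G\ge 0$ pointwise on $\unitsurf{n-1}$. Equivalently, one may work with the Hessian of the $1$-homogeneous Minkowski functional $\|\cdot\|_M=|\cdot|/\rho_M(\cdot/|\cdot|)$, which must be positive semidefinite on $\RR^n\setminus\{0\}$.

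Under the hypothesis that $L$ is smooth with strictly positive curvature, $G(\rho_L,\nabla\rho_L,\nabla^2\rho_L)$ is bounded below by some constant $\kappa_0>0$ on the compact sphere $\unitsurf{n-1}$. The perturbed body $K$ has radial function $\rho_K=(\rho_L^{-p}+\varepsilon\varphi)^{-1/p}$, which is smooth and positive on $\unitsurf{n-1}$ for all sufficiently small $\varepsilon>0$, since $\rho_L$ is uniformly bounded away from $0$ and $\infty$. Differentiating the identity $\rho_K^{-p}=\rho_L^{-p}+\varepsilon\varphi$ twice yields explicit expressions for $\nabla\rho_K$ and $\nabla^2\rho_K$ as smooth functions of $(\rho_L,\nabla\rho_L,\nabla^2\rho_L,\varphi,\nabla\varphi,\nabla^2\varphi,\varepsilon)$, showing that $\rho_K\to\rho_L$ in $C^2(\unitsurf{n-1})$ as $\varepsilon\to 0^+$.

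Combining these two observations, $G(\rho_K,\nabla\rho_K,\nabla^2\rho_K)\to G(\rho_L,\nabla\rho_L,\nabla^2\rho_L)$ uniformly on $\unitsurf{n-1}$, so for $\varepsilon$ small enough one has $G(\rho_K,\nabla\rho_K,\nabla^2\rho_K)\ge\kappa_0/2>0$ everywhere, which forces $K$ to be convex. The main technical point---really the only substantive one---is writing down the pointwise description of convexity as a smooth function of $(\rho,\nabla\rho,\nabla^2\rho)$; once this is granted, everything else reduces to continuity of smooth composition on a compact domain.
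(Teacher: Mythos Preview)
The paper does not supply its own proof of this lemma; it is stated with a citation to \cite{Koldobsky2005}*{p.~96} and used as a black box. Your proposal gives a correct self-contained argument: convexity of a smooth star body is equivalent to a pointwise positivity condition on a smooth function of the $2$-jet of $\rho$ (either via the Gauss--Kronecker curvature of the radial parametrization or, equivalently, via positive semidefiniteness of the Hessian of the Minkowski functional), strict positivity for $L$ gives a uniform lower bound on the compact sphere, and $C^2$-continuity of $\rho_K=(\rho_L^{-p}+\varepsilon\varphi)^{-1/p}$ in $\varepsilon$ then yields the claim for small $\varepsilon$. This is exactly the standard perturbation argument one expects, and it matches in spirit the treatment in the cited reference. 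One small point worth making explicit: positivity of the Gauss--Kronecker curvature alone guarantees local convexity of the boundary hypersurface; passing to global convexity of the enclosed body uses that the hypersurface is a closed embedded sphere (Hadamard's ovaloid theorem), which is automatic here since $K$ is a star body with smooth positive radial function.
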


\begin{proof}[Proof of \Cref{prop:injecset} for $\IntBody_{C,p}$]
As the set $\convexbodies(\IntBody_{C,p})$ is a strict subset, there exist indices $(k,l) \in \NN \times \NN$ such that $c_{l-k}[h_C^p]=0$. Denoting by $\widetilde{P}_{k,l} \in \mathcal{H}_{k,l}^{2n}$ the Jacobi polynomial of bi-degree $(k,l)$ (see, e.g., \cite{Johnson1977,Quinto1987}), we define
\begin{align*}
	\rho_K^{2n+p} = \rho_L^{2n+p} + \varepsilon \widetilde{P}_{k,l}
\end{align*}
where $|\varepsilon|$ is sufficiently small such that $K \in \convexbodies(\CC^n)$ with non-empty interior (using Lemma~\ref{lem:convradial}), and the sign of $\varepsilon$ is chosen in a way such that
\begin{align}\label{eq:prfPropInjSignEps}
	-p\varepsilon\innerProduct{\widetilde{P}_{k,l},\rho_L^{-p}}\geq 0.
\end{align}
By our choice of $(k,l)$, we have $\TT{\nu_{C,p}}\widetilde{P}_{k,l}=0$ and thus, by Proposition~\ref{prop:repIcpPicByFourier} and since multiplier transforms commute, $\IntBody_{C,p}K = \IntBody_{C,p}L$.

Next, note that by \eqref{eq:prfPropInjSignEps},
\begin{align*}
	-2np\DualMixedVol{-p}{K,L} = -p\innerProduct{\rho_K^{2n+p}, \rho_{L}^{-p}} = -2np\voln_{2n}(L) - p\varepsilon\innerProduct{\widetilde{P}_{k,l} , \rho_{L}^{-p}} \geq -2np\voln_{2n}(L),
\end{align*}
and, consequently, by \eqref{eq:dualLpMinkineq},
\begin{align*}
	-p\voln_{2n}(L) \leq -p \DualMixedVol{-p}{K,L} \leq -p \voln_{2n}(K)^{\frac{2n+p}{2n}} \voln_{2n}(L)^{-\frac{p}{2n}},
\end{align*}
that is, $-p\voln_{2n}(L) \leq -p\voln_{2n}(K)$. As equality holds in \eqref{eq:dualLpMinkineq} only when $K, L$ are dilates, and, by construction, this is not the case for $K$ and $L$ here, the inequality is strict, concluding the proof.
\end{proof}

\subsection{\nameref{sec:affirmCases}}
Next, we will show that Problem~\ref{prob:BPgeneralPhi} has a positive answer whenever $K\in \convexbodies(\IntBody_{C, p})$ is in the image set $\IntBody_{\overline{C},p}(\starbodiesO)$.

\begin{proof}[Proof of Proposition~\ref{prop:AffirmImage} for $\IntBody_{C,p}$]
	By Proposition~\ref{prop:repIcpPicByFourier} and Corollary~\ref{cor:adjCkonjcompl}, we obtain for every $K, L \in \starbodiesO(\CC^n)$
	\begin{align}\label{eq:prfPrpAffImIcpEqDual}
		2n \DualMixedVol{-p}{K,\IntBody_{\overline{C},p}L} &= \frac{1}{2n+p}\innerProduct{\rho_{K}^{2n+p}, \JOp{\overline{C}}{p}\rho_L^{2n+p}} \nonumber\\
		&= \frac{1}{(2\pi)^2(2n+p)}\innerProduct{\rho_{K}^{2n+p}, \TT{\nu_{\overline{C},p}}\sphericalFourier{-2n-p}\rho_L^{2n+p}}\nonumber\\
		&= \frac{1}{(2\pi)^2(2n+p)}\innerProduct{\TT{\nu_{C,p}}\sphericalFourier{-2n-p}\rho_{K}^{2n+p}, \rho_L^{2n+p}}
		= 2n \DualMixedVol{-p}{L, \IntBody_{C,p}K}.
	\end{align}
	Next, let $K \in \IntBody_{\overline{C},p}(\starbodiesO)$, that is, $K = \IntBody_{\overline{C},p} K_0$ for some $K_0 \in \starbodiesO(\CC^n)$, and assume that $\IntBody_{C,p}K \subseteq \IntBody_{C,p}L$ for $L \in \convexbodies(\IntBody_{C,p})$. Since $-p\DualMixedVol{-p}{K_0, \cdot}$ is monotone, \eqref{eq:prfPrpAffImIcpEqDual} together with the inclusion implies
	\begin{align*}
		-p\voln_{2n}(K) &= -p \DualMixedVol{-p}{K, \IntBody_{\overline{C},p}K_0} = -p \DualMixedVol{-p}{K_0,\IntBody_{C,p}K} \\&\leq -p \DualMixedVol{-p}{K_0,\IntBody_{C,p}L} = -p\DualMixedVol{-p}{L,K}
	\end{align*}
	Applying the dual $L_p$-Minkowski inequality \eqref{eq:dualLpMinkineq},
	\begin{align*}
		-p\voln_{2n}(K) \leq -p\DualMixedVol{-p}{L,K} \leq -p\Vol{2n}{L}^{\frac{2n+p}{2n}}\Vol{2n}{K}^{-\frac{p}{2n}},
	\end{align*}
	that is, $-p\voln_{2n}(K) \leq -p\voln_{2n}(L)$, as claimed.
	
	The statement when $\voln_{2n}(K) = \voln_{2n}(L)$ follows directly from the equality cases of the dual $L_p$-Minkowski inequality.
\end{proof}

\subsection{\nameref{sec:outsideIm}}
In the next step, we show that, if there are bodies that are not in the image of $\IntBody_{C, p}$, then Problem~\ref{prob:BPgeneralPhi} has a negative answer. To this end, we will give a characterization of $\IntBody_{C,p}(\starbodiesO)$ in terms of the spherical Fourier transform. Indeed, we first show that every body in the image embeds into $L_p$, that is, by \cite{Rubin2008}*{Prop.~5.4}, is a real $L_p$-intersection body.

\begin{proposition}\label{prop:CIntEmbedLp}
Let $C \in \convexbodies(\CC)$ be origin-symmetric with non-empty interior, and $-2<p\leq 1$ be non-zero. If $K\in \imgSetPhi{\CIntBody{C}{p}}{\starbodiesO}$, then 
	\begin{align*}
		\frac{1}{\GAMMA{-\frac p2}}\sphericalFourier{p}\rho_K^{-p}\geq 0. 
	\end{align*}
In particular, $\imgSetPhi{\CIntBody{C}{p}}{\starbodiesO} \subseteq \{\hookrightarrow L_p\} \cap \{K \in \starbodiesO(\CC^n): \rho_K^{-p} \in \image\TT{\nu_{C,p}}\}$.
\end{proposition}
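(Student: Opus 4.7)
The plan is to apply the spherical Fourier transform $\sphericalFourier{p}$ to the representation of $\rho_K^{-p}$ supplied by Proposition~\ref{prop:repIcpPicByFourier}. Writing $K=\IntBody_{C,p}L$ for some $L\in\starbodiesO(\CC^n)$, that proposition gives
$$\rho_K^{-p}=\frac{1}{(2\pi)^2(2n+p)}\,\TT{\nu_{C,p}}\circ\sphericalFourier{-2n-p}\rho_L^{2n+p}$$
as continuous functions on $\unitsurf{2n-1}$. Since $\TT{\nu_{C,p}}$, $\sphericalFourier{-2n-p}$ and $\sphericalFourier{p}$ are all multiplier transforms on the $\mathcal{H}^{2n}_{k,l}$-decomposition, they pairwise commute. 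Applying $\sphericalFourier{p}$ distributionally to both sides, commuting it past $\TT{\nu_{C,p}}$, and invoking the inversion formula~\eqref{eq:FourierInverse} cancels the two spherical Fourier transforms and produces
$$\sphericalFourier{p}\rho_K^{-p}=\kappa\,\TT{\nu_{C,p}}\rho_L^{2n+p}$$
for an explicit constant $\kappa>0$ (positive because $n\geq 2$ and $p>-2$ force $2n+p>0$).

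The required sign then falls out directly from Proposition~\ref{prop:FphCmeasure}: the measure $\nu_{C,p}/\GAMMA{-\frac p2}$ is non-negative on $\unitsurf{1}$, so the integral operator $\TT{\nu_{C,p}}/\GAMMA{-\frac p2}$ sends non-negative continuous functions to non-negative continuous functions. Evaluating it at $\rho_L^{2n+p}\geq 0$ therefore yields $\sphericalFourier{p}\rho_K^{-p}/\GAMMA{-\frac p2}\geq 0$, and the left-hand side is actually represented by a bona fide continuous non-negative function. The ``in particular'' part is then automatic: the $L_p$-embedding criterion of Proposition~\ref{prop:embedLp} is exactly this sign condition, while the inclusion $\rho_K^{-p}\in\image\TT{\nu_{C,p}}$ is visible already in the defining identity, with explicit preimage $\frac{1}{(2\pi)^2(2n+p)}\sphericalFourier{-2n-p}\rho_L^{2n+p}$.

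The only point requiring care is that $\rho_K^{-p}$ is a priori only continuous, so $\sphericalFourier{p}\rho_K^{-p}$ must be read as a distribution and the commutations have to be justified by pairing against even smooth test functions on $\unitsurf{2n-1}$ (this is the same sort of argument used in the proof of Proposition~\ref{prop:repIcpPicByFourier} itself). Once the two Fourier transforms have been cancelled, however, the right-hand side is a genuine continuous function, so the final pointwise sign statement is unambiguous. I therefore do not expect any serious obstacle beyond this bookkeeping.
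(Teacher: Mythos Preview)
Your proposal is correct and follows essentially the same route as the paper: apply $\sphericalFourier{p}$ to the representation of $\rho_K^{-p}$ from Proposition~\ref{prop:repIcpPicByFourier}, commute the multiplier transforms, use the inversion formula~\eqref{eq:FourierInverse} to cancel $\sphericalFourier{p}\circ\sphericalFourier{-2n-p}$, and conclude by the sign of $\nu_{C,p}/\GAMMA{-\frac p2}$ from Proposition~\ref{prop:FphCmeasure}. The paper's proof is slightly terser (it does not spell out the distributional bookkeeping you mention), but the argument is the same.
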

\begin{proof}
Let $K_0 \in\starbodiesO(\CC^n)$ such that $K=\IntBody_{C,p}K_0$. By Proposition~\ref{prop:repIcpPicByFourier} and since multiplier transforms commute, we get
	\begin{align*}
		\sphericalFourier{p}\rho_{K}^{-p}=\sphericalFourier{p}\rho_{\IntBody_{C,p}K_0}^{-p}=\frac{(2\pi)^{-2}}{(2n+p)}\sphericalFourier{p}\circ\sphericalFourier{-2n-p}\circ \TT{\nu_{C,p}}\rho_{K_0}^{2n+p}
=\frac{(2\pi)^{2n-2}}{2n+p}\TT{\nu_{C,p}}\rho_{K_0}^{2n+p}.
	\end{align*}
By \Cref{prop:FphCmeasure}, $\frac{1}{\GAMMA{-\frac p2}}\nu_{C,p}$ is a positive measure and, hence,
	\begin{align*}
		\frac{1}{\GAMMA{-\frac p2}}\sphericalFourier{p}\rho_{K}^{-p}=\frac{(2\pi)^{2n-2}}{(2n+p)\GAMMA{-\frac p2}}\TT{\nu_{C,p}}\rho_{K_0}^{2n+p} \geq 0,
	\end{align*}
which by \Cref{prop:embedLp} is equivalent to $K$ embedding into $L_p$.
\end{proof}

Note that, in general, the image of $\TT{\nu_{C,p}}$ can be quite complicated and is only described by spherical harmonics. However, as it turns out, if the statement of Problem~\ref{prob:BPgeneralPhi} is negative, it already fails when restricted to $\unitsurf{1}$-invariant bodies. In the following, we will therefore concentrate on $\unitsurf{1}$-invariant bodies, which we denote by a superscript $(\cdot)^{\unitsurf{1}}$, that is, e.g., $\IntBody_{C, p}(\starbodiesO)^{\unitsurf{1}}$.

Let us point out that, if $C = \DD$, then $\image\TT{\nu_{\DD,p}} = C(\unitsurf{2n-1})^{\unitsurf{1}}$, as can be seen, e.g., by \eqref{eq:multTnuCp}. Next, we show that on $\unitsurf{1}$-invariant bodies, the image of $\IntBody_{C, p}$ does not depend on $C$ and coincides (up to closure) with $\unitsurf{1}$-invariant bodies embedding into $L_p$.
\begin{proposition}\label{prop:S1invimg}
Let $C\in \convexbodies(\CC)$ be origin-symmetric with non-empty interior, and $-2<p\leq 1$ be non-zero. Then
	\begin{align*}
		\imgSetPhi{\CIntBody{C}{p}}{\starbodiesO}^{\unitsurf{1}}= \imgSetPhi{\CIntBody{\DD}{p}}{\starbodiesO} \quad \text{and} \quad \cl_w\left(\imgSetPhi{\CIntBody{\DD}{p}}{\starbodiesO}\right)=\{\hookrightarrow L_p\}^{\unitsurf{1}},
	\end{align*}
where we denote by $\cl_w$ the closure with respect to weak convergence of $\rho_K^{-p}$.
\end{proposition}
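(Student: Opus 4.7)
I would attack the two equalities separately, exploiting the fact that $h_\DD \equiv 1$ on $\unitsurf{1}$, so that $c_0[h_\DD^p] = 1$ and $c_{l-k}[h_\DD^p] = 0$ for $k \neq l$. By the multiplier formula \eqref{eq:multJOp}, this forces $\lambda_{k,l}[\JOp{\DD}{p}] = 0$ for $k \neq l$, i.e.\ $\JOp{\DD}{p}$ annihilates all non-diagonal harmonic components and therefore maps into $\unitsurf{1}$-invariant functions. Moreover, comparing multipliers yields
\[
\JOp{C}{p}f = c_0[h_C^p]\,\JOp{\DD}{p}f \qquad \text{for every $\unitsurf{1}$-invariant } f \in C(\unitsurf{2n-1}).
\]

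\textit{First equality.} For the inclusion $\supseteq$, note that $\IntBody_{\DD,p}L$ is automatically $\unitsurf{1}$-invariant, and since scaling $L$ by $t>0$ scales $\IntBody_{\DD,p}L$ by $t^{-(2n+p)/p}$, any positive multiple of $\IntBody_{\DD,p}L$ lies in $\IntBody_{\DD,p}(\starbodiesO)$; together with the displayed identity applied to $\unitsurf{1}$-invariant $L$ (for which $\IntBody_{C,p}L = c_0[h_C^p]^{-1/p}\IntBody_{\DD,p}L$) this gives $\IntBody_{\DD,p}(\starbodiesO) \subseteq \IntBody_{C,p}(\starbodiesO)^{\unitsurf{1}}$. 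For $\subseteq$, given $L \in \starbodiesO(\CC^n)$ with $\IntBody_{C,p}L$ $\unitsurf{1}$-invariant, I would replace $L$ by the $\unitsurf{1}$-averaged body $\tilde L$ with $\rho_{\tilde L}^{2n+p}(u) = \frac{1}{2\pi}\int_{\unitsurf{1}} \rho_L^{2n+p}(cu)\,dc$; because $\JOp{C}{p}$ commutes with the $\unitsurf{1}$-action and $\JOp{C}{p}\rho_L^{2n+p}$ is already $\unitsurf{1}$-invariant, $\IntBody_{C,p}\tilde L = \IntBody_{C,p}L$, and then the first part applies to $\tilde L$.

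\textit{Second equality, inclusion $\subseteq$.} Proposition~\ref{prop:CIntEmbedLp} yields $\IntBody_{\DD,p}(\starbodiesO)\subseteq\{\hookrightarrow L_p\}^{\unitsurf{1}}$; by Proposition~\ref{prop:embedLp} the embedding condition amounts to the sign condition on the distribution $\frac{1}{\Gamma(-p/2)}\sphericalFourier{p}\rho_K^{-p}$, which is preserved under weak limits of $\rho_K^{-p}$ because $\sphericalFourier{p}$ is continuous on distributions and positive measures form a weakly closed cone. The $\unitsurf{1}$-invariance obviously passes to limits as well.

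\textit{Second equality, inclusion $\supseteq$.} Given $K\in\{\hookrightarrow L_p\}^{\unitsurf{1}}$, Proposition~\ref{prop:embedLp} says $\mu = \frac{1}{\Gamma(-p/2)}\sphericalFourier{p}\rho_K^{-p}$ is a finite $\unitsurf{1}$-invariant positive Borel measure. Mollifying $\mu$ on $\unitsurf{2n-1}$, symmetrising under the $\unitsurf{1}$-action, and adding a small positive constant produces a sequence of smooth, strictly positive, $\unitsurf{1}$-invariant densities $g_j$ converging to $\mu$ weakly. I would then define star bodies $L_j\in\starbodiesO(\CC^n)$ by $\rho_{L_j}^{2n+p} = c\,g_j$ for a constant $c$ determined by the multiplier computation. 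Using Proposition~\ref{prop:repIcpPicByFourier}, the inversion relation \eqref{eq:FourierInverse}, and the fact that $T_{\nu_{\DD,p}}$ acts on $\unitsurf{1}$-invariant functions as the scalar $2\pi k_p\,\Gamma(p/2+1)^2$ (by \eqref{eq:multTnuCp} with $c_0[h_\DD^p]=1$), the constant $c$ can be chosen so that
\[
\rho_{\IntBody_{\DD,p}L_j}^{-p} = \frac{c}{(2\pi)^2(2n+p)}\,T_{\nu_{\DD,p}}\,\sphericalFourier{-2n-p}g_j \xrightarrow{j\to\infty} \rho_K^{-p}
\]
weakly, which gives $K\in\cl_w(\IntBody_{\DD,p}(\starbodiesO))$.

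\textit{Main obstacle.} The subtle point is the sign tracking needed to make $c>0$ so that $\rho_{L_j}^{2n+p}>0$ and $L_j$ is an honest star body. The measure $\sphericalFourier{p}\rho_K^{-p}$ has the sign of $\Gamma(-p/2)$, while $k_p=-2^{2+p}\sin(p\pi/2)$ flips sign at $p=0$; I would verify in both regimes $p\in(-2,0)$ and $p\in(0,1]$ that the product of these signs with the auxiliary gamma and power-of-$2\pi$ factors yields $c>0$. Once positivity of the $L_j$ is established and the multipliers $\alpha_{k,k}^{(n,p)}$ are checked to be non-vanishing in the full range $p\in(-2,1)\setminus\{0\}$ (so that $\JOp{\DD}{p}$ has no kernel on $\unitsurf{1}$-invariant functions and the limiting step is indeed onto), the rest is routine continuity of multiplier transforms on distributions.
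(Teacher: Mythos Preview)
Your proposal is correct and close in spirit to the paper's argument, with two minor differences worth noting.

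For the first equality, the paper constructs the preimage in one stroke by setting $\rho_{\widetilde{K}_0}^{2n+p}=\nu_{\DD,p}(\unitsurf{1})^{-1}\TT{\nu_{C,p}}\rho_{K_0}^{2n+p}$; the operator $\TT{\nu_{C,p}}$ simultaneously handles the $\unitsurf{1}$-averaging and the scaling constant, and its positivity (from Proposition~\ref{prop:FphCmeasure}) guarantees $\widetilde{K}_0\in\starbodiesO$. Your route---uniform $\unitsurf{1}$-averaging followed by a separate rescaling via the homogeneity of $\IntBody_{\DD,p}$---achieves the same thing and is arguably more elementary, since it only uses the multiplier identity $\JOp{C}{p}=c_0[h_C^p]\,\JOp{\DD}{p}$ on $\unitsurf{1}$-invariant inputs rather than the full Fourier decomposition of Theorem~\ref{mthm:JOp}.

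For the second equality your argument coincides with the paper's. Your ``main obstacle'' about the sign of $c$ evaporates once you observe that, by Proposition~\ref{prop:FphCmeasure}, $\frac{1}{\Gamma(-p/2)}\nu_{\DD,p}\geq 0$, so $\nu_{\DD,p}(\unitsurf{1})$ and $\Gamma(-p/2)$ have the same sign and their ratio is automatically positive for all $p\in(-2,1]\setminus\{0\}$; no case analysis on $k_p$ is needed. (The paper avoids even this by normalising $\mu=\nu_{\DD,p}(\unitsurf{1})^{-1}\sphericalFourier{p}\rho_K^{-p}$ from the outset.) Finally, your closing remark about non-vanishing of $\alpha_{k,k}^{(n,p)}$ is superfluous: you only need $K$ in the \emph{weak closure} of the image, so surjectivity of $\JOp{\DD}{p}$ on $\unitsurf{1}$-invariant functions plays no role.
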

\begin{proof}
Let $K \in \imgSetPhi{\CIntBody{C}{p}}{\starbodiesO}^{\unitsurf{1}}$, that is, $\unitsurf{1}$-invariant and $K = \IntBody_{C,p}K_0$ for some $K_0 \in \starbodiesO(\CC^n)$. By the $\unitsurf{1}$-invariance, $\TT{\nu_{C,p}} \rho_K^{-p} = \nu_{C,p}(\unitsurf{1})\rho_{K}^{-p}$ for all $C$, and, by Proposition~\ref{prop:repIcpPicByFourier},
\begin{align*}
	\nu_{\DD,p}(\unitsurf{1})\rho_K^{-p}=\TT{\nu_{\DD,p}}\rho_K^{-p}=\frac{1}{(2\pi)^2(2n+p)}\TT{\nu_{\DD,p}}\circ \sphericalFourier{-2n-p}\circ\TT{\nu_{C,p}}\rho_{K_0}^{2n+p}.
\end{align*}
We can therefore define $\widetilde{K}_0\in\starbodiesO(\CC^n)$ by
\begin{align*}
	\rho_{\widetilde{K}_0}^{2n+p}=\frac{1}{\nu_{\DD,p}(\unitsurf{1})}\TT{\nu_{C,p}}\rho_{K_0}^{2n+p}
\end{align*}
to see that $K = \IntBody_{\DD,p} \widetilde{K}_0$ and thus $K \in \imgSetPhi{\CIntBody{\DD}{p}}{\starbodiesO}$. Let us point out here that we used that $\nu_{\DD,p}(\unitsurf{1})^{-1}\nu_{C,p} \geq 0$, by Proposition~\ref{prop:FphCmeasure}, in order to get $\rho_{\widetilde{K}_0} > 0$. Repeating the argument with $C$ and $\DD$ replaced yields the other inclusion of $\imgSetPhi{\CIntBody{C}{p}}{\starbodiesO}^{\unitsurf{1}}= \imgSetPhi{\CIntBody{\DD}{p}}{\starbodiesO}$.

Next, by Proposition~\ref{prop:CIntEmbedLp} and the comment below it, $\imgSetPhi{\CIntBody{\DD}{p}}{\starbodiesO} \subseteq \{\hookrightarrow L_p\}^{\unitsurf{1}}$. As the condition in Proposition~\ref{prop:embedLp} is closed with respect to the weak topology, it therefore remains to see that $\{\hookrightarrow L_p\}^{\unitsurf{1}} \subseteq \cl_w\left(\imgSetPhi{\CIntBody{\DD}{p}}{\starbodiesO}\right)$.

To this end, let $K \in \{\hookrightarrow L_p\}^{\unitsurf{1}}$. By Propositions~\ref{prop:embedLp} and \ref{prop:FphCmeasure}, $\mu :=\nu_{\DD,p}(\unitsurf{1})^{-1}\sphericalFourier{p}\rho_{K}^{-p}$ is a positive measure on $\unitsurf{2n-1}$. We can therefore find a sequence of positive, $\unitsurf{1}$-invariant functions $f_k \in C^\infty(\unitsurf{2n-1})$, $k \in \NN$, which convergences to $\mu$ in the weak topology. For $k \in \NN$, define $K_k \in \starbodiesO(\CC^n)$ by $\rho_{K_k}^{2n+p} = f_k$. Then, by $\unitsurf{1}$-invariance,
\begin{align*}
	(2n+p)\rho_{\IntBody_{\DD,p} K_k}^{-p} = \frac{\nu_{\DD,p}(\unitsurf{1})}{(2\pi)^2} \sphericalFourier{-2n-p}\rho_{K_k}^{2n+p} \quad\stackrel{w}{\longrightarrow}\quad  \frac{\nu_{\DD,p}(\unitsurf{1})}{(2\pi)^2}\sphericalFourier{-2n-p}\mu = (2\pi)^{2n-2}\rho_{K}^{-p},
\end{align*}
concluding the proof.
\end{proof}

Next, we will use the characterization in Proposition~\ref{prop:S1invimg} to prove Proposition~\ref{prop:counterex}. For the reader's convenience, we repeat the statement using the notation of Proposition~\ref{prop:S1invimg}.
\begin{proposition}\label{prop:counterexIDp} Let $-2<p\leq 1$ be non-zero.
	If $\cl_w(\IntBody_{C,p}(\starbodiesO)) \cap \convexbodies(\IntBody_{C,p})^{\unitsurf{1}}$ is a strict subset of $\convexbodies(\IntBody_{C,p})^{\unitsurf{1}}$, then there exist $\unitsurf{1}$-invariant $K, L \in \convexbodies(\IntBody_{C, p})$ such that
	\begin{align*}
		\CIntBody{C}{p} K \subseteq \CIntBody{C}{p} L \quad \quad \text{but} \quad \quad  -p\Vol{2n}{K}>-p\Vol{2n}{L}.
	\end{align*}
\end{proposition}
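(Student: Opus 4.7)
The plan is to find a smooth, strictly convex, $\unitsurf{1}$-invariant body $L$ that does not embed into $L_p$, and to take $K$ as a small perturbation of $L$ along a direction detected by this failure of embedding. By hypothesis there is an $\unitsurf{1}$-invariant $L_0 \in \convexbodies(\IntBody_{C,p})$ with $L_0 \notin \cl_w(\IntBody_{C,p}(\starbodiesO))$. Since $\IntBody_{\DD,p}(\starbodiesO) \subseteq \IntBody_{C,p}(\starbodiesO)$ and $\cl_w(\IntBody_{\DD,p}(\starbodiesO)) = \{\hookrightarrow L_p\}^{\unitsurf{1}}$ by \Cref{prop:S1invimg}, it follows that $L_0$ does not embed into $L_p$. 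As non-embedding is open in the weak topology (the positivity condition of \Cref{prop:embedLp} is weakly closed) and smooth strictly convex $\unitsurf{1}$-invariant bodies are Hausdorff-dense in $\unitsurf{1}$-invariant convex bodies with non-empty interior, I may assume $L := L_0$ is already smooth with strictly positive curvature.

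By \Cref{prop:embedLp}, the failure of $L$ to embed supplies a smooth, non-negative, $\unitsurf{1}$-invariant function $g$ on $\unitsurf{2n-1}$ with $\innerProduct{\sphericalFourier{p}\rho_L^{-p}, g}/\Gamma(-p/2) < 0$. Since $\Gamma(-p/2)$ and $-p$ share their sign on $(-2,0)\cup(0,1]$, this rewrites as $p\,\innerProduct{g, \sphericalFourier{p}\rho_L^{-p}} > 0$. I then define $K$ by
\[
\rho_K^{2n+p} = \rho_L^{2n+p} - \varepsilon\,\sphericalFourier{p}g, \qquad \varepsilon>0 \text{ small}.
\]
The $\UU(n)$-equivariance of $\sphericalFourier{p}$ makes $K$ automatically $\unitsurf{1}$-invariant, so $K \in \convexbodies(\IntBody_{C,p})$ (every $\unitsurf{1}$-invariant body is), while smoothness and strict positive curvature of $L$ yield convexity of $K$ for small $\varepsilon$ by a perturbation argument in the spirit of \Cref{lem:convradial}.

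For the inclusion, combining $(2n+p)\rho_{\IntBody_{C,p}K}^{-p}=\JOp{C}{p}\rho_K^{2n+p}$, \Cref{mthm:JOp}, and the inversion $\sphericalFourier{-2n-p}\sphericalFourier{p}g = (2\pi)^n g$ from \eqref{eq:FourierInverse} gives
\[
-p\bigl(\rho_{\IntBody_{C,p}K}^{-p} - \rho_{\IntBody_{C,p}L}^{-p}\bigr) = \frac{p\varepsilon(2\pi)^{n-2}}{2n+p}\,\TT{\nu_{C,p}}g,
\]
which is pointwise $\leq 0$ since $-p\nu_{C,p}\geq 0$ and $g\geq 0$ force $p\,\TT{\nu_{C,p}}g \leq 0$; this is equivalent to $\IntBody_{C,p}K \subseteq \IntBody_{C,p}L$. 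Finally, a first-order expansion of $\rho_K^{2n}$ in $\varepsilon$ yields
\[
-p\bigl(\Vol{2n}{K} - \Vol{2n}{L}\bigr) = \frac{p\varepsilon}{2n+p}\,\innerProduct{g, \sphericalFourier{p}\rho_L^{-p}} + O(\varepsilon^2),
\]
whose leading term is strictly positive by the choice of $g$, so $-p\,\Vol{2n}{K}>-p\,\Vol{2n}{L}$ for $\varepsilon$ sufficiently small.

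The main obstacle is the regularization step in the first paragraph: the linear perturbation $\rho_K^{2n+p} = \rho_L^{2n+p} - \varepsilon\,\sphericalFourier{p}g$ is not guaranteed to produce a convex $K$ unless $L$ is already smooth with positive curvature, so one must argue carefully that $L_0$ can be replaced by such a body for which the failure of embedding persists. Once this is in place, the remainder is a sign-checking exercise driven by the two parallel asymmetries $\mathrm{sign}(\Gamma(-p/2))=\mathrm{sign}(-p)$ and $-p\nu_{C,p}\geq 0$.
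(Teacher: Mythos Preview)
Your proof is correct and follows essentially the same strategy as the paper: both regularize to a smooth $\unitsurf{1}$-invariant $L$ that fails to embed into $L_p$, define $K$ via $\rho_K^{2n+p} = \rho_L^{2n+p} - \varepsilon\,\sphericalFourier{p}g$ for a non-negative $\unitsurf{1}$-invariant $g$ detecting the sign failure in \Cref{prop:embedLp}, and derive the inclusion $\IntBody_{C,p}K \subseteq \IntBody_{C,p}L$ from \Cref{mthm:JOp} together with $-p\,\nu_{C,p}\geq 0$. The regularization step you single out as the main obstacle is exactly \Cref{lem:embedApprox} in the paper.

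The one genuine difference is the final volume comparison. The paper computes the dual mixed volume $\DualMixedVol{-p}{K,L}$ \emph{exactly} (since $\rho_K^{2n+p}$ is a linear perturbation of $\rho_L^{2n+p}$, the pairing $\innerProduct{\rho_K^{2n+p},\rho_L^{-p}}$ has no error term) and then invokes the dual $L_p$-Minkowski inequality~\eqref{eq:dualLpMinkineq} to pass from $\DualMixedVol{-p}{K,L}$ to $\voln_{2n}(K)$. Your first-order Taylor expansion of $\rho_K^{2n}$ is a legitimate and slightly more elementary alternative that bypasses~\eqref{eq:dualLpMinkineq} entirely, at the cost of carrying an $O(\varepsilon^2)$ remainder and hence needing $\varepsilon$ small in one additional place. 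Either way the sign bookkeeping is governed by the same two facts you identified, $\mathrm{sign}\,\Gamma(-p/2)=\mathrm{sign}(-p)$ and $-p\,\nu_{C,p}\geq 0$.
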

Let us point out that $\convexbodies(\IntBody_{C,p})^{\unitsurf{1}}$ consists precisely of all $\unitsurf{1}$-invariant convex bodies with non-empty interior (in particular, does not depend on $C$).

For the proof of Proposition~\ref{prop:counterexIDp}, we need the following perturbation statement, which is a slight generalization of \cite{Koldobsky2005}*{Lem.~4.10}. It follows from the fact that the condition in Proposition~\ref{prop:embedLp} is stable under suitable approximations.
\begin{lemma}\label{lem:embedApprox}
	Let $K \in \convexbodies(\RR^n)$ be origin-symmetric with $0 \in \interior K$, and suppose that $K$ does not embed into $L_p$. Then there exists a sequence of smooth origin-symmetric convex bodies $(K_k)_{k\in \NN} $ with strictly positive curvature such that $\rho_{K_k}^{-p} \to \rho_K^{-p}$ and each $K_k$ does not embed into $L_p$.
\end{lemma}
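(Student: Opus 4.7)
The plan is to smooth $K$ by standard means and show that non-embedding into $L_p$ is inherited by sufficiently close smooth approximants. More precisely, I would invoke Schneider's classical approximation theorem (\cite{Schneider2014}) to obtain a sequence of smooth origin-symmetric convex bodies $K_k$ with strictly positive Gauss curvature such that $K_k \to K$ in the Hausdorff metric. Since $0 \in \interior K$, the radial functions $\rho_{K_k}$ are uniformly bounded away from zero and converge uniformly to $\rho_K$, so $\rho_{K_k}^{-p} \to \rho_K^{-p}$ uniformly on $\unitsurf{n-1}$ (and thus, in particular, as distributions).

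Because $K$ does not embed into $L_p$, Proposition~\ref{prop:embedLp} asserts that the distribution $\frac{1}{\Gamma(-p/2)}\sphericalFourier{p}\rho_K^{-p}$ is not nonnegative. Hence there exists an even, nonnegative test function $\varphi \in C^\infty(\unitsurf{n-1})$ (chosen, in the case $p > 0$, so that the classical Fourier transform of its homogeneous extension is supported away from the origin) such that
\begin{equation*}
	\left\langle \tfrac{1}{\Gamma(-p/2)}\sphericalFourier{p}\rho_K^{-p},\, \varphi\right\rangle < 0.
\end{equation*}
By the self-adjointness of $\sphericalFourier{p}$ on the relevant class of test functions, this pairing equals $\langle \rho_K^{-p},\, \tfrac{1}{\Gamma(-p/2)}\sphericalFourier{p}\varphi\rangle$, and $\sphericalFourier{p}\varphi$ is itself smooth (hence continuous) on $\unitsurf{n-1}$. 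The uniform convergence $\rho_{K_k}^{-p} \to \rho_K^{-p}$ then yields
\begin{equation*}
	\left\langle \tfrac{1}{\Gamma(-p/2)}\sphericalFourier{p}\rho_{K_k}^{-p},\, \varphi\right\rangle \longrightarrow \left\langle \tfrac{1}{\Gamma(-p/2)}\sphericalFourier{p}\rho_K^{-p},\, \varphi\right\rangle < 0,
\end{equation*}
so for all sufficiently large $k$ the same test function $\varphi$ witnesses that $\frac{1}{\Gamma(-p/2)}\sphericalFourier{p}\rho_{K_k}^{-p}$ fails to be nonnegative. By Proposition~\ref{prop:embedLp}, each such $K_k$ does not embed into $L_p$, and passing to this subsequence completes the construction.

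The principal technicality lies in the case $p > 0$, which is precisely the extension beyond \cite{Koldobsky2005}*{Lem.~4.10}: one must verify that the negativity-witnessing $\varphi$ can indeed be chosen within the restricted class of smooth functions whose homogeneous extensions have Fourier transforms supported off the origin, and that $\sphericalFourier{p}\varphi$ really is a continuous function on the sphere so that pairing against $\rho_{K_k}^{-p}$ is given by an honest integral. Both facts are consequences of the analytic continuation framework underlying Proposition~\ref{prop:embedLp}. Once they are in place, the uniform convergence of $\rho_{K_k}^{-p}$ combined with the self-adjointness of $\sphericalFourier{p}$ makes the transfer of the failure of embeddability essentially automatic.
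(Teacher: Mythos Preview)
Your proposal is correct and follows precisely the approach the paper indicates: the paper does not give a detailed proof but merely remarks that the lemma ``follows from the fact that the condition in Proposition~\ref{prop:embedLp} is stable under suitable approximations,'' citing \cite{Koldobsky2005}*{Lem.~4.10} for the case $p<0$. Your argument is exactly the standard way to make this stability precise---approximate by smooth strictly convex bodies, pass the negativity witness $\varphi$ through the self-adjointness of $\sphericalFourier{p}$, and use uniform convergence of $\rho_{K_k}^{-p}$---so there is nothing to add.

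One minor remark: your caveat about choosing $\varphi$ with Fourier transform supported off the origin in the case $p>0$ is unnecessary here. That restriction appears in Definition~\ref{def:embedIsomLpNeg}, but the equivalent spherical characterization in Proposition~\ref{prop:embedLp} is simply the distributional nonnegativity of $\tfrac{1}{\Gamma(-p/2)}\sphericalFourier{p}\rho_K^{-p}$ on $\unitsurf{n-1}$, where $\sphericalFourier{p}$ is defined for $0<p\leq 1$ by analytic continuation of the multipliers and extended to distributions by self-adjointness. So any even $\varphi\in C^\infty(\unitsurf{n-1})$ will do as a witness, and $\sphericalFourier{p}\varphi$ is automatically smooth.
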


\begin{proof}[Proof of Proposition~\ref{prop:counterexIDp}]
Let $\widetilde{L} \in \convexbodies(\IntBody_{C,p})^{\unitsurf{1}}$ not in $\cl_w\left(\imgSetPhi{\CIntBody{C}{p}}{\starbodiesO}\right)=\{\hookrightarrow L_p\}^{\unitsurf{1}}$. By Lemma~\ref{lem:embedApprox}, there exists a smooth $L \in \convexbodies(\CC^n)$ with strictly positive curvature which does not embed into $L_p$. Moreover, we can assume that $L$ is $\unitsurf{1}$-invariant and, thus, $L \in \convexbodies(\IntBody_{C,p})$.

As $\rho_L$ is smooth on $\unitsurf{2n-1}$, so is $\sphericalFourier{p}\rho_L^{-p}$. By Proposition~\ref{prop:embedLp}, $-p\sphericalFourier{p}\rho_{L}^{-p}$ is negative on some $\unitsurf{1}$-invariant open set $U \subseteq\unitsurf{2n-1}$. Let $0 \neq \varphi \in C^\infty(\unitsurf{2n-1})$ be non-negative and $\unitsurf{1}$-invariant, such that $\varphi$ is supported inside $U$, and set $\psi = \sphericalFourier{p} \varphi$. Since $\sphericalFourier{p}$ intertwines rotations, $\psi$ is also $\unitsurf{1}$-invariant.

Next, we define $K \in \convexbodies(\CC^n)$ by
\begin{align*}
	\rho_{K}^{2n+p} = \rho_L^{2n+p} - \varepsilon\psi,
\end{align*}
for $\varepsilon > 0$ sufficiently small, using Lemma~\ref{lem:convradial}. Note that, by construction, $K$ is again $\unitsurf{1}$-invariant and therefore in $\convexbodies(\IntBody_{C, p})$. Since $(2\pi)^{2n}\varphi=\sphericalFourier{-2n-p}\psi$, we get that $\sphericalFourier{-2n-p} \psi$ is positive and, thus, by Theorem~\ref{mthm:JOp} and using that $-p \nu_{C,p} \geq 0$,
\begin{align*}
	-p(2\pi)^2 \JOp{C}{p}\rho_{K}^{2n+p} &= \frac{-p}{2n+p}\TT{\nu_{C,p}}\sphericalFourier{-2n-p}\left(\rho_L^{2n+p} - \varepsilon p\psi\right) \\
	&= -p(2\pi)^2\JOp{C}{p}\rho_{L}^{2n+p}  - \varepsilon \frac{-p}{2n+p}\nu_{C,p}(\unitsurf{1})\sphericalFourier{-2n-p}\psi \leq -p(2\pi)^2\JOp{C}{p}\rho_{L}^{2n+p}.
\end{align*}
Consequently, $\IntBody_{C, p} K \subseteq \IntBody_{C, p}L$. As $\varphi$ is positive on an open subset of $U$ and $-p\sphericalFourier{p}\rho_L^{-p}$ is negative on $U$, we obtain by \eqref{eq:FourierInverse} and the self-adjointness of $\sphericalFourier{p}$,
\begin{align*}
	-2np \DualMixedVol{-p}{K,L}&=-p\innerProduct{\rho_K^{2n+p}, \rho_L^{-p}} = - 2np \voln_{2n}(L) + p\varepsilon \innerProduct{(2\pi)^{-2n}\sphericalFourier{-2n-p}\psi, \sphericalFourier{p}\rho_L^{-p}}\\
	&=- 2np \voln_{2n}(L) - \varepsilon \innerProduct{\varphi, (-p)\sphericalFourier{p}\rho_L^{-p}} > - 2np \voln_{2n}(L).
\end{align*}
By the dual $L_p$-Minkowski inequality~\eqref{eq:dualLpMinkineq},
\begin{align*}
	- p \voln_{2n}(L) < -p \DualMixedVol{-p}{K,L} \leq -p \voln_{2n}(K)^{\frac{2n+p}{2n}}\voln_{2n}(L)^{-\frac{p}{2n}},
\end{align*}
that is, $-p \voln_{2n}(L) < -p \voln_{2n}(K)$, concluding the proof.
\end{proof}

\subsection{\nameref{sec:counterexHidim}}
By \Cref{prop:counterex} resp.\ \Cref{prop:counterexIDp}, it suffices to show that not every $\unitsurf{1}$-invariant convex body is contained in the closure of $\imgSetPhi{\CIntBody{C}{p}}{\starbodiesO}^{\unitsurf{1}}$ in order to obtain a negative answer to \Cref{prob:BPgeneralPhi}. Using examples from \cite{Koldobsky2008} of bodies that do not embed into some $L_p$, \Cref{prop:CIntEmbedLp} then completes the proof of \Cref{prop:counterExHidim} for $\IntBody_{C, p}$.
\begin{proof}[Proof of \Cref{prop:counterExHidim} for $\IntBody_{C, p}$]
By \cite[Thm.~4]{Koldobsky2008}, the unit ball of the complex space $\ell^q_n$, i.e.
	\begin{align*}
		B_q=\left\{(z_1, \dots, z_n) \in \CC^{n}:\sum_{j=1}^n |z_j|^{q} \leq 1\right\},
	\end{align*}
where $q>2$ does not embed into $L_p$ for $p \in (-2n+4,0)$. In particular, $B_q$ does not embed into $L_p$ for $p \in (-2,0)$ if $n\geq 3$. Moreover, by \cite{Koldobsky2005}*{Thm.~6.17}, $B^q$ cannot embed into $L_p$ for $p \in (0,2]$ if $n \geq 3$. By \cite[Thm.~6]{Koldobsky2008}, finally, there exist $2$-dimensional, $\unitsurf{1}$-invariant convex bodies that do not embed into $L_p$ for any $p>0$.

Hence, by \Cref{prop:CIntEmbedLp}, $\cl_w(\IntBody_{C,p}(\starbodiesO)) \cap \convexbodies(\IntBody_{C,p})^{\unitsurf{1}} \subsetneq \convexbodies(\IntBody_{C, p})^{\unitsurf{1}}$.
\end{proof}

\subsection{\nameref{sec:affAnsLoDim}} In the case $n=2$, $C=\DD$ and $p \in (-2,0)$, embedding results from \cite{Koldobsky2008} yield a positive answer. Note that we need to take similar steps as in the proof of Proposition~\ref{prop:AffirmImage} here, as we did not extend the notion of complex $L_p$-intersection body (and thereby the statement of Proposition~\ref{prop:AffirmImage}) to measures (that is, take the closure of the image) as was done in the real setting.
\begin{proof}[Proof of \Cref{MainThmB}]
As $\convexbodies(\IntBody_{\DD, p})$ consists precisely of all $\unitsurf{1}$-invariant convex bodies with non-empty interior, by \cite[Thm.~3]{Koldobsky2008}, every $K \in \convexbodies(\IntBody_{\DD, p})$ in $\CC^2$ embeds into $L_p$ for $(-4,0)$. In particular, by \Cref{prop:embedLp}, $\sphericalFourier{p} \rho_K^{-p} \geq 0$ for all $K \in \convexbodies(\IntBody_{\DD, p})$, as $\GAMMA{-\frac{p}{2}} > 0$ for $p < 0$.

Suppose now that $K, L \in \convexbodies(\IntBody_{\DD, p})$ satisfy $\IntBody_{\DD, p} K \subseteq \IntBody_{\DD, p} L$. As $K$ and $L$ are $\unitsurf{1}$-invariant, by Proposition~\ref{prop:repIcpPicByFourier},
\begin{align*}
	\frac{\nu_{\DD,p}(\unitsurf{1})}{(2\pi)^2(4+p)}\sphericalFourier{-4-p}\rho_{K}^{4+p} = \rho_{\IntBody_{\DD, p} K}^{-p} \leq \rho_{\IntBody_{\DD, p} L}^{-p} = \frac{\nu_{\DD,p}(\unitsurf{1})}{(2\pi)^2(4+p)}\sphericalFourier{-4-p}\rho_{L}^{4+p},
\end{align*}
that is, $\sphericalFourier{-4-p} \rho_{K}^{4+p} \leq \sphericalFourier{-4-p} \rho_{L}^{4+p}$, as $\nu_{\DD,p} \geq 0$ by Proposition~\ref{prop:FphCmeasure}. By \eqref{eq:FourierInverse} and the self-adjointness of the spherical Fourier transform, we thus obtain
\begin{align*}
	4 \voln_{4}(K) &= 4 \DualMixedVol{-p}{K,K} = \innerProduct{\rho_K^{4+p}, \rho_K^{-p}} = (2\pi)^4 \innerProduct{\sphericalFourier{-4-p}\rho_K^{4+p}, \sphericalFourier{p}\rho_K^{-p}}\\
	&\leq (2\pi)^4 \innerProduct{\sphericalFourier{-4-p}\rho_L^{4+p}, \sphericalFourier{p}\rho_K^{-p}} = \innerProduct{\rho_L^{4+p}, \rho_K^{-p}} = 4 \DualMixedVol{-p}{L,K}.
\end{align*}
The dual $L_p$-Minkowski inequality~\eqref{eq:dualLpMinkineq} therefore implies
\begin{align*}
	\voln_{4}(K) \leq \DualMixedVol{-p}{L,K} \leq \voln_{4}(L)^{\frac{4+p}{4}}\voln_{4}(K)^{-\frac{p}{4}},
\end{align*}
that is, $\voln_{4}(K) \leq \voln_{4}(L)$, concluding the proof.
\end{proof}

\medskip

We conclude the treatment of complex $L_p$-intersection bodies by a remark on the complex centroid body.

\begin{remark}\label{rem:compCentroidBody}
The \emph{complex centroid body} map $\Gamma_C$ was defined in \cite{Haberl2019} for all $K \in \convexbodies(\CC^n)$ with non-empty interior by
\begin{align*}
	h_{\Gamma_CK}(u)=\frac{1}{\Vol{2n}{K}}\int_{K}h_{Cu}(x) dx, \quad u \in \unitsurf{2n-1}.
\end{align*}
Clearly, by \eqref{eq:defLpCIntBody}, $(\Gamma_C K)^\circ = \voln_{2n}(K) \IntBody_{C, 1}K = \IntBody_{C, 1}(\voln_{2n}(K)^{-1/(2n+1)} K)$, and, thus, $\convexbodies(\Gamma_C^\circ) = \convexbodies(\IntBody_{C, 1})$ and the images of $\Gamma_C^\circ$ and $\IntBody_{C, 1}$ coincide (as $\IntBody_{C, 1}$ is homogeneous).

Moreover, since
\begin{align*}
	\Gamma_CK \subseteq \Gamma_C L \quad \Leftrightarrow\quad \Gamma_C^\circ L \subseteq \Gamma_C^\circ K,
\end{align*}
the conditions in Problem~\ref{prob:BPgeneralPhi} are equivalent, with $K$ and $L$ replaced. \Cref{prop:AffirmImage} therefore implies
\begin{align*}
	\Vol{2n}{\Vol{2n}{K}^{-\frac{1}{2n+1}}K}\leq\Vol{2n}{\Vol{2n}{L}^{-\frac{1}{2n+1}}L},
\end{align*}
that is, $\voln_{2n}(K) \leq \voln_{2n}(L)$, whenever $L \in \IntBody_{\overline{C}, 1}(\starbodiesO) \cap \convexbodies(\IntBody_{C, 1})$.

In a similar way, by \Cref{prop:counterex} for $\IntBody_{C, 1}$, we can find bodies $K$ and $L$ with $\Gamma_C K \subseteq \Gamma_C L$ but $\Vol{2n}{L}<\Vol{2n}{K}$, if there are bodies outside the image, as is the case for all $n \geq 2$ by the same statement for $\IntBody_{C, 1}$, solving \Cref{prob:BPgeneralPhi} for $\Gamma_C$ as well.
\end{remark}

\section{Busemann--Petty problem for complex projection bodies}\label{sec:projbodies}
In this section, we prove Theorem~\ref{mthm:stateComplBP} for the complex projection body map $\Pi_{C}$. As for $\IntBody_{C, p}$, we will follow the steps outlined in the introduction.

\subsection{\nameref{sec:detInjSet}}
In terms of the multipliers~\eqref{eq:multJOp} of the operator $\JOp{C}{1}$, the injectivity set $\convexbodies(\Pi_{C})$, defined in \eqref{eq:InjSetPiC}, is given by
\begin{align*}
	\convexbodies(\Pi_{C}) = \left\{K \in \convexbodies(\CC^n): \interior K \neq \emptyset \text{ and } c_{l-k}[h_C^p]=0 \Rightarrow \pi_{k,l}\surfArea{K}=0 \right\}.
\end{align*}
In particular, we have $\Pi_{C} K = \Pi_{C} L$ for $K, L \in \convexbodies(\Pi_C)$ only if $S_K = S_L$, that is, $K = L$ up to translations. Moreover, $\convexbodies(\Pi_{C})$ contains the set of $\unitsurf{1}$-invariant convex bodies with non-empty interior.

In the following, we will need the notion of the mixed volume of $K, L \in \convexbodies(\RR^m)$, 
	\begin{align*}
		\MixedVol{K,L}=\frac{1}{m}\int_{\unitsurf{m-1}}h_L(u) d\surfArea{K}(u),
	\end{align*}
see, e.g., \cite{Schneider2014}*{Sec.~5}. Clearly, $\MixedVol{K,\cdot}$ is monotone and $\MixedVol{K, K} = \voln_{m}(K)$. Minkow\-ski's first inequality (see \cite[Thm.~7.2.1]{Schneider2014}) states that
	\begin{align}\label{eq:Minkineq}
		\MixedVol{K,L}\geq \Vol{m}{K}^{\frac{m-1}{m}}\Vol{m}{L}^{\frac{1}{m}}
	\end{align}
for all $K, L \in \convexbodies(\RR^m)$ with non-empty interior. Equality holds if and only if $K$ and $L$ are homothetic, that is, $K = \lambda L + x$ for some $\lambda \in \RR\setminus\{0\}$ and $x \in \RR^m$.

\medskip

We can now prove \Cref{prop:injecset} using again a small perturbation.

\begin{proof}[Proof of \Cref{prop:injecset} for $\Pi_{C}^\circ$]
	Let $L \in \convexbodies(\CC^n)$ be smooth with positive curvature, that is, $S_L$ has a smooth, positive density $s_L \in C^\infty(\unitsurf{2n-1})$. As $\convexbodies(\Pi_{C})$ is a proper subset, there exists $(k,l) \in \NN$ such that $c_{l-k}[h_C] = 0$. Using the Jacobi polynomial $\widetilde{P}_{k,l} \in \mathcal{H}^{2n}_{k,l}$, we define
	\begin{align*}
		\varphi = s_L + \varepsilon \widetilde{P}_{k,l},
	\end{align*}
	where $|\varepsilon|$ is sufficiently small such that $\varphi > 0$ and the sign of $\varepsilon$ is chosen such that
	\begin{align}\label{eq:prfInjecSetPicSignEps}
		\varepsilon \innerProduct{\widetilde{P}_{k,l},h_L}\leq 0
	\end{align}
	By Minkowski's existence theorem \cite{Schneider2014}*{Thm.~8.2.2}, there exists a convex body $K \in \convexbodies(\CC^n)$ such that $S_K = \varphi$.
	
	By the choice of $(k,l)$, $\TT{S_{iC}} \widetilde{P}_{k,l} = 0$ by \eqref{eq:multTnuCp}, and we conclude by Proposition~\ref{prop:repIcpPicByFourier}
	\begin{align*}
		-4\pi h_{\Pi_C K} = T_{S_{iC}}\sphericalFourier{-2n-1}S_K = T_{S_{iC}}\sphericalFourier{-2n-1}S_L = -4\pi h_{\Pi_C L},
	\end{align*}
	that is, $\Pi_{C}K = \Pi_{C} L$. Consequently, by \eqref{eq:prfInjecSetPicSignEps},
	\begin{align*}
		2n \MixedVol{K,L} = \innerProduct{S_K, h_L} = 2n\voln_{2n}(L) + \varepsilon\innerProduct{\widetilde{P}_{k,l}, h_L} \leq 2n \voln_{2n}(L).
	\end{align*}
	Minkowski's first inequality~\eqref{eq:Minkineq} then implies
	\begin{align*}
		\voln_{2n}(L) \geq \MixedVol{K, L} \geq \voln_{2n}(K)^{\frac{2n-1}{2n}}\voln_{2n}(L)^{\frac{1}{2n}},
	\end{align*}
	that is, $\voln_{2n}(L) \geq \voln_{2n}(K)$. As $K$ and $L$ are not homothetic, the equality cases of \eqref{eq:Minkineq} imply that the inequality is strict, concluding the proof.
\end{proof}

\subsection{\nameref{sec:affirmCases}}
As in case of $\IntBody_{C, p}$, Problem~\ref{prob:BPgeneralPhi} for $\Pi_C$ has an affirmative answer, when one of the bodies is in the image of the adjoint operator $\Pi_{\overline{C}}$. The proof idea is exactly the same as before, namely using an adjointness property for mixed volumes and applying Minkowski's first inequality.
\begin{proof}[Proof of Proposition~\ref{prop:AffirmImage} for $\Pi_C$]
First, we apply Proposition~\ref{prop:repIcpPicByFourier} and use that $\sphericalFourier{-2n-1}$ is self-adjoint and, by \Cref{lem:Tmult}, $\TT{S_{iC}}$ is the adjoint operator of $\TT{S_{i\overline{C}}}$, to obtain
\begin{align}
	2n\MixedVol{K, \Pi_{\overline{C}} L} &= \innerProduct{S_K, h_{\Pi_{\overline{C}} L}} =  \frac{-1}{4\pi} \innerProduct{S_K, \TT{S_{i\overline{C}}}\sphericalFourier{-2n-1}S_L} \nonumber\\&=\frac{-1}{4\pi} \innerProduct{\TT{S_{iC}}\sphericalFourier{-2n-1}S_K, S_L}= \innerProduct{h_{\Pi_{C} K}, S_L} = 2n\MixedVol{L, \Pi_{C} K}, \label{eq:prfAffirmImagePicAdj}
\end{align}
for every $K, L \in \convexbodies(\Pi_{C})$. Let now $K, L \in \convexbodies(\Pi_C)$ with $K = \Pi_{\overline{C}}K_0$ for some $K_0\in \convexbodies(\Pi_{C})$ and assume that $\Pi_{C}^\circ K \subseteq \Pi_{C}^\circ L$, equivalently, $\Pi_{C} L \subseteq \Pi_{C} K$. Then, by \eqref{eq:prfAffirmImagePicAdj} and the monotonicity of mixed volume,
\begin{align*}
	\voln_{2n}(K) = \MixedVol{K, \Pi_{\overline{C}}K_0} = \MixedVol{K_0, \Pi_C K} \geq \MixedVol{K_0, \Pi_{C}L} = \MixedVol{L, \Pi_{\overline{C}} K_0} = \MixedVol{L, K}.
\end{align*}
Minkowski's first inequality~\eqref{eq:Minkineq} finally implies that
\begin{align*}
	\voln_{2n}(K) \geq \MixedVol{L, K} \geq \voln_{2n}(L)^{\frac{2n-1}{2n}}\voln_{2n}(K)^{\frac{1}{2n}},
\end{align*}
that is, $\voln_{2n}(K) \geq \voln_{2n}(L)$ as claimed. Equality $\voln_{2n}(K) = \voln_{2n}(L)$ holds here only if there is equality in Minkowski's first inequality, that is, $K$ and $L$ are homothetic. As they have the same volume, they must be translates of each other.
\end{proof}

\subsection{\nameref{sec:outsideIm}} 
The next step is to show that the answer to Problem~\ref{prob:BPgeneralPhi} is negative if there are bodies which are not contained in the image of $\Pi_C$. Again, we will start with a description of $\Pi_{C}(\convexbodies)$ in terms of the spherical Fourier transform.
\begin{proposition}\label{prop:ImPiCPolarEmbedL1}
	Let $C \in \convexbodies(\CC)$ be origin-symmetric with non-empty interior. If $K\in \Pi_C(\convexbodies)$, then 
	\begin{align*}
		\frac{1}{\GAMMA{-\frac 12}}\sphericalFourier{1}h_K\geq 0. 
	\end{align*}
	In particular, $K^\circ \in \{\hookrightarrow L_1\} \cap \{L \in \convexbodies(\CC^n): \rho_L^{-1} \in \image\TT{S_{iC}}\}$.
\end{proposition}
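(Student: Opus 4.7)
The plan is to use the Fourier representation of the complex projection body from Proposition~\ref{prop:repIcpPicByFourier} together with Fourier inversion, and then invoke Proposition~\ref{prop:embedLp} for the embedding assertion.

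First, write $K = \Pi_C K_0$ for some $K_0 \in \convexbodies(\CC^n)$ with non-empty interior. Proposition~\ref{prop:repIcpPicByFourier} then gives
\begin{equation*}
    h_K = -\frac{1}{4\pi}\, \TT{\surfArea{iC}}\circ\sphericalFourier{-2n-1}\surfArea{K_0}.
\end{equation*}
Since the three operators in sight are all multiplier transforms, they commute, and the Fourier inversion identity $\sphericalFourier{1}\circ\sphericalFourier{-2n-1}=(2\pi)^{2n}\id$ on even distributions (cf.\ \eqref{eq:FourierInverse} and the analogous computation in the proof of Proposition~\ref{prop:S1invimg}) allows to apply $\sphericalFourier{1}$ to both sides and simplify to
\begin{equation*}
    \sphericalFourier{1} h_K = -\frac{(2\pi)^{2n-1}}{2}\, \TT{\surfArea{iC}}\surfArea{K_0}.
\end{equation*}

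Next comes a positivity argument. Both $\surfArea{iC}$ and $\surfArea{K_0}$ are non-negative Borel measures on $\unitsurf{1}$ and $\unitsurf{2n-1}$, respectively, so the extension of $\TT{\surfArea{iC}}$ to measures (by the adjoint description in Lemma~\ref{lem:Tmult}~\ref{it:Tmult1}) sends non-negative measures to non-negative measures. Hence $\TT{\surfArea{iC}}\surfArea{K_0}\geq 0$ as a measure on $\unitsurf{2n-1}$. Since $\GAMMA{-1/2} = -2\sqrt{\pi} < 0$, the combined scalar $\tfrac{1}{\GAMMA{-1/2}}\cdot\bigl(-\tfrac{(2\pi)^{2n-1}}{2}\bigr)$ is positive, and the first claim $\tfrac{1}{\GAMMA{-1/2}}\sphericalFourier{1} h_K \geq 0$ follows.

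For the "in particular" part, note that $h_K = \rho_{K^\circ}^{-1}$, so Proposition~\ref{prop:embedLp}, applied on $\RR^{2n}$ with $p=1$, yields $K^\circ \in \{\hookrightarrow L_1\}$. The containment $\rho_{K^\circ}^{-1}=h_K \in \image\TT{\surfArea{iC}}$ is read off directly from the first displayed identity. The only delicate point will be the bookkeeping of the two independent negative constants (the factor $-1/(4\pi)$ in the projection body representation, and the factor $1/\GAMMA{-1/2}$ in the embedding criterion) so that their product delivers the correct overall sign.
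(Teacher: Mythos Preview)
Your argument is correct and follows essentially the same route as the paper's proof: apply $\sphericalFourier{1}$ to the representation of $h_K$ from Proposition~\ref{prop:repIcpPicByFourier}, use Fourier inversion $\sphericalFourier{1}\circ\sphericalFourier{-2n-1}=(2\pi)^{2n}\id$, and read off positivity from the fact that $\TT{\surfArea{iC}}\surfArea{K_0}\geq 0$ together with $\GAMMA{-1/2}<0$. Your bookkeeping of the constants is accurate (your $-\tfrac{(2\pi)^{2n-1}}{2}$ equals the paper's $-\tfrac{(2\pi)^{2n}}{4\pi}$), and your explicit justification of the ``in particular'' part via $h_K=\rho_{K^\circ}^{-1}$ and Proposition~\ref{prop:embedLp} is a welcome addition.
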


\begin{proof}
	First note that $\GAMMA{-\frac{1}{2}} = -2\sqrt{\pi}$. Combining \Cref{prop:repIcpPicByFourier} and \eqref{eq:FourierInverse}, using that multiplier transforms commute, we obtain
\begin{align*}
	-\sphericalFourier{1}h_K = \frac{1}{4\pi} \sphericalFourier{1}\TT{S_{iC}}\sphericalFourier{-2n-1}S_K = \frac{(2\pi)^{2n}}{4 \pi}\TT{S_{iC}}S_K \geq 0,
\end{align*}
yielding the first claim. The second claim follows directly from Proposition~\ref{prop:embedLp}.
\end{proof}
As before, it is sufficient to consider only $\unitsurf{1}$-invariant bodies. In the following, we will denote by $\Pi_C^\circ(\convexbodies) = \{ K^\circ: \, K \in \Pi_C(\convexbodies)\}$.
\begin{proposition}\label{prop:S1invPicsubimgDD}
	Let $C\in \convexbodies(\CC)$ be origin-symmetric with non-empty interior. Then
	\begin{align*}
		\Pi_C(\convexbodies)^{\unitsurf{1}}= \Pi_\DD(\convexbodies) \quad \text{and} \quad \cl_w\left(\Pi_\DD^\circ(\convexbodies)\right)=\{\hookrightarrow L_1\}^{\unitsurf{1}},
	\end{align*}
	where we denote by $\cl_w$ the closure with respect to weak convergence of $\rho_K^{-1}$.
\end{proposition}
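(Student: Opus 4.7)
The plan is to adapt the proof of Proposition~\ref{prop:S1invimg} to the projection body setting, replacing the radial parametrisation of star bodies by Minkowski's existence theorem for convex bodies with prescribed surface area measure. The key tool throughout is the Fourier representation
\[
h_{\Pi_C L} \;=\; -\tfrac{1}{4\pi}\,\TT{S_{iC}}\circ\sphericalFourier{-2n-1}S_L
\]
from Proposition~\ref{prop:repIcpPicByFourier}, together with the fact that all the operators appearing are multiplier transforms and hence commute.

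For the first equality, I start with $K=\Pi_C L \in \Pi_C(\convexbodies)^{\unitsurf{1}}$ (where $L$ can be taken with non-empty interior) and use the $\unitsurf{1}$-invariance of $h_K$, together with the identity $\TT{S_{i\DD}}\varphi = S_{i\DD}(\unitsurf{1})\varphi = 2\pi\varphi$ for $\unitsurf{1}$-invariant $\varphi$ (as $i\DD = \DD$ is the unit disc with perimeter $2\pi$), to rewrite
\begin{align*}
h_K \;=\; \tfrac{1}{2\pi}\TT{S_{i\DD}}h_K \;=\; -\tfrac{1}{4\pi}\,\TT{S_{i\DD}}\circ\sphericalFourier{-2n-1}\!\left(\tfrac{1}{2\pi}\TT{S_{iC}}S_L\right).
\end{align*}
The measure $\mu := \tfrac{1}{2\pi}\TT{S_{iC}}S_L$ is non-negative with centred first moment, so Minkowski's existence theorem yields a convex body $\widetilde L \in \convexbodies(\CC^n)$ with $S_{\widetilde L} = \mu$, whence $K = \Pi_\DD \widetilde L$. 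The reverse inclusion $\Pi_\DD(\convexbodies) \subseteq \Pi_C(\convexbodies)^{\unitsurf{1}}$ is obtained by the analogous argument with $C$ and $\DD$ interchanged, using also that $\Pi_\DD L$ is automatically $\unitsurf{1}$-invariant, since $\TT{S_\DD}\psi$ is $\unitsurf{1}$-invariant for every $\psi$.

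For the second equality, the inclusion ``$\subseteq$'' follows from Proposition~\ref{prop:ImPiCPolarEmbedL1} applied with $C=\DD$ (together with the $\unitsurf{1}$-invariance of $\Pi_\DD L$) and the weak closedness of the condition $-\sphericalFourier{1}\rho_K^{-1} \geq 0$ in Proposition~\ref{prop:embedLp}. For ``$\supseteq$'', take $K \in \{\hookrightarrow L_1\}^{\unitsurf{1}}$; using $\Gamma(-\tfrac{1}{2}) < 0$, the distribution $\mu := -\sphericalFourier{1}\rho_K^{-1}$ is a non-negative, $\unitsurf{1}$-invariant, even measure on $\unitsurf{2n-1}$. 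I approximate $\mu$ weakly by smooth, positive, even, $\unitsurf{1}$-invariant functions $f_k$ and invoke Minkowski's existence theorem to obtain origin-symmetric convex bodies $L_k$ with $S_{L_k}$ proportional to $f_k$. Combining Proposition~\ref{prop:repIcpPicByFourier}, the inversion identity \eqref{eq:FourierInverse}, and the $\unitsurf{1}$-invariance to absorb $\TT{S_\DD}$ as the scalar $2\pi$, one obtains $\sphericalFourier{1}h_{\Pi_\DD L_k}$ as a fixed multiple of $-f_k$; applying $\sphericalFourier{-2n-1}$ (which preserves weak convergence by self-adjointness) then gives $h_{\Pi_\DD L_k} \to \rho_K^{-1}$ weakly, i.e.\ $(\Pi_\DD L_k)^\circ \to K$ in the required sense.

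The principal technical obstacle is the verification of the hypotheses of Minkowski's existence theorem for the candidate measures $\tfrac{1}{2\pi}\TT{S_{iC}}S_L$ and the renormalised $f_k$. Non-negativity and vanishing first moment are immediate from the construction and evenness, while non-concentration on a great subsphere of $\unitsurf{2n-1}$ follows in the first part from the non-emptiness of $\interior L$ (so that $S_L$ is not concentrated on any great subsphere, a property which passes to the $\unitsurf{1}$-averaged measure $\TT{S_{iC}}S_L$) and can be arranged directly by the choice of approximation in the second.
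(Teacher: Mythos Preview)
Your proposal is correct and follows essentially the same route as the paper: for the first equality you use the $\unitsurf{1}$-invariance to absorb $\TT{S_{i\DD}}$ as the scalar $S_{i\DD}(\unitsurf{1})=2\pi$, commute the multiplier transforms, and invoke Minkowski's existence theorem for $\mu=\tfrac{1}{2\pi}\TT{S_{iC}}S_L$; for the second equality you combine Proposition~\ref{prop:ImPiCPolarEmbedL1} with a weak approximation of $-\sphericalFourier{1}\rho_K^{-1}$ by smooth positive $\unitsurf{1}$-invariant densities and again apply Minkowski's theorem. The only cosmetic difference is that the paper tracks the constant as $S_{i\DD}(\unitsurf{1})$ throughout rather than writing $2\pi$, and passes directly from $h_{\Pi_\DD K_k}=-\tfrac{S_{i\DD}(\unitsurf{1})}{4\pi}\sphericalFourier{-2n-1}f_k$ to the weak limit without the intermediate step via $\sphericalFourier{1}h_{\Pi_\DD L_k}$; your slightly more explicit discussion of the Minkowski hypotheses is at the same level of rigor as the paper's.
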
	
\begin{proof}
Let $K \in \Pi_C(\convexbodies)^{\unitsurf{1}}$, that is, $\unitsurf{1}$-invariant and such that $K = \Pi_C K_0$ for some $K_0 \in \convexbodies(\Pi_C)$. By the $\unitsurf{1}$-invariance, $\TT{S_{iC}}h_K = S_{iC}(\unitsurf{1}) h_K$ for all $C$ and, by Proposition~\ref{prop:repIcpPicByFourier},
\begin{align*}
	S_{i\DD}(\unitsurf{1}) h_K = \TT{S_{i\DD}} h_K = \frac{-1}{4\pi}\TT{S_{i\DD}}\sphericalFourier{-2n-1}\TT{S_{iC}}S_{K_0}. 
\end{align*}
Setting $\mu = S_{i\DD}(\unitsurf{1})^{-1}\TT{S_{iC}}S_{K_0}$, we note that $\mu$ is a centered (even), non-negative measure, which is not concentrated in a subsphere. Hence, by Minkowski's existence theorem~\cite{Schneider2014}*{Thm.~8.2.2}, there exists an origin-symmetric body $L_0 \in \convexbodies(\CC^n)$ with non-empty interior such that $S_{L_0} = \mu$.

Consequently, $K = \Pi_\DD L_0$, that is, $K \in \Pi_\DD(\convexbodies)$. Repeating the argument with $C$ and $\DD$ replaced yields the other inclusion.

Next, by \Cref{prop:ImPiCPolarEmbedL1}, all bodies in $\Pi_\DD^\circ(\convexbodies)$ embed into $L_1$ and are $\unitsurf{1}$-invariant. As $\{\hookrightarrow L_1\}^{\unitsurf{1}}$ is closed under weak convergence, it remains to show that $\{\hookrightarrow L_1\}^{\unitsurf{1}} \subseteq \cl_w\left(\Pi_\DD^\circ(\convexbodies)\right)$.

To this end, let $K \in \{\hookrightarrow L_1\}^{\unitsurf{1}}$. By Proposition~\ref{prop:embedLp}, $\mu = -S_{i\DD}(\unitsurf{1})^{-1}\sphericalFourier{1}\rho_{K}^{-1}$ is a non-negative measure. We can therefore find a sequence of positive, $\unitsurf{1}$-invariant functions $f_k \in C^\infty(\unitsurf{2n-1})$, $k \in \NN$, which converges to $\mu$ in the weak topology. As every $f_k$ is, in particular, even and positive, Minkowski's existence theorem implies that there are convex bodies $K_k \in \convexbodies(\CC^n)$ with non-empty interior, such that $S_{K_k} = f_k$, $k \in \NN$. Then, by $\unitsurf{1}$-invariance,
\begin{align*}
	\rho_{\Pi_\DD^\circ K_k}^{-1} = -\frac{S_{i\DD}(\unitsurf{1})}{4\pi} \sphericalFourier{-2n-1}S_{K_k} \quad\stackrel{w}{\longrightarrow}\quad -\frac{S_{i\DD}(\unitsurf{1})}{4\pi} \sphericalFourier{-2n-1}\mu = \frac{(2\pi)^{2n}}{4\pi} \rho_{K}^{-1},
\end{align*}
concluding the proof.
\end{proof}
Next, we will use the characterization in Proposition~\ref{prop:S1invPicsubimgDD} to prove Proposition~\ref{prop:counterex}. For the reader's convenience, we repeat the statement using the notation of Proposition~\ref{prop:S1invPicsubimgDD}.
\begin{proposition}\label{prop:counterexPiD}
	If $\cl_w(\Pi_C^\circ(\convexbodies)) \cap \convexbodies(\Pi_C)^{\unitsurf{1}}$ is a strict subset of $\convexbodies(\Pi_C)^{\unitsurf{1}}$, then there exist $\unitsurf{1}$-invariant $K, L \in \convexbodies(\Pi_C)$ such that
	\begin{align*}
		\Pi_C K \subseteq \Pi_C L \quad \quad \text{but} \quad \quad  \Vol{2n}{K}>\Vol{2n}{L}.
	\end{align*}
\end{proposition}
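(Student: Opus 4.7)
The plan is to mirror the perturbation scheme of Proposition~\ref{prop:counterexIDp}, with Minkowski's first inequality~\eqref{eq:Minkineq} taking the role of the dual $L_p$-Minkowski inequality. First, from the hypothesis together with Proposition~\ref{prop:S1invPicsubimgDD} we obtain an $\unitsurf{1}$-invariant body $\widetilde{L}$ that does not embed into $L_1$. Applying Lemma~\ref{lem:embedApprox} (with $p=1$), $\unitsurf{1}$-averaging, and polarity, we replace $\widetilde{L}$ by a smooth, origin-symmetric, $\unitsurf{1}$-invariant convex body $L$ of strictly positive curvature whose polar $L^\circ$ still fails to embed into $L_1$. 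Since $h_L = \rho_{L^\circ}^{-1}$ and $\Gamma(-\tfrac{1}{2}) < 0$, Proposition~\ref{prop:embedLp} translates this non-embedding into the key sign property: $\sphericalFourier{1}h_L$ is strictly positive on some non-empty $\unitsurf{1}$-invariant open set $U \subset \unitsurf{2n-1}$.

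Following the pattern of Proposition~\ref{prop:counterexIDp} we then pick a non-zero, non-negative, even, $\unitsurf{1}$-invariant $\varphi \in C^\infty(\unitsurf{2n-1})$ with $\supp \varphi \subset U$, and set $\psi := \sphericalFourier{1}\varphi$, so that $\sphericalFourier{-2n-1}\psi = (2\pi)^{2n}\varphi \geq 0$ by~\eqref{eq:FourierInverse}. Writing $s_L$ for the continuous positive density of $S_L$, for $\varepsilon > 0$ small enough the measure $(s_L + \varepsilon\psi)\,du$ is a smooth positive even Borel measure on $\unitsurf{2n-1}$ not concentrated on a subsphere, and Minkowski's existence theorem yields a unique $\unitsurf{1}$-invariant origin-symmetric $K = K_\varepsilon \in \convexbodies(\Pi_C)$ with $S_K = (s_L + \varepsilon\psi)\,du$. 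Invoking Proposition~\ref{prop:repIcpPicByFourier} and the $\unitsurf{1}$-invariance of $\psi$, we obtain
\[
	-4\pi\,(h_{\Pi_C K} - h_{\Pi_C L}) = \varepsilon\,\TT{S_{iC}}\sphericalFourier{-2n-1}\psi = \varepsilon (2\pi)^{2n} S_{iC}(\unitsurf{1})\varphi \geq 0,
\]
hence $\Pi_C K \subseteq \Pi_C L$.

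The remaining strict inequality $\voln_{2n}(K) > \voln_{2n}(L)$ is the main obstacle: in Proposition~\ref{prop:counterexIDp} the dual $L_p$-Minkowski inequality provides an \emph{upper} bound on $\DualMixedVol{-p}{K,L}$ in terms of $\voln_{2n}(K)$, which is exactly the direction needed, whereas Minkowski's first inequality only provides a \emph{lower} bound on $\MixedVol{K,L}$, so that $\MixedVol{K,L} > \voln_{2n}(L)$ does not by itself produce $\voln_{2n}(K) > \voln_{2n}(L)$. To close this gap, we expand the Cauchy identity $2n\voln_{2n}(K) = \int h_K\,dS_K$ using $S_K = S_L + \varepsilon\psi\,du$ to get $2n\voln_{2n}(K) = 2n\MixedVol{L,K} + \varepsilon\int h_K \psi\,du$. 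If, for contradiction, $\voln_{2n}(K) \leq \voln_{2n}(L)$, then Minkowski's first inequality applied to $\MixedVol{L,K}$ gives $\MixedVol{L,K} \geq \voln_{2n}(L)^{(2n-1)/(2n)}\voln_{2n}(K)^{1/(2n)} \geq \voln_{2n}(K)$, forcing $\int h_K\psi\,du \leq 0$. On the other hand, since the Minkowski problem is Hausdorff continuous, $h_{K_\varepsilon} \to h_L$ uniformly as $\varepsilon \to 0^+$, and therefore $\int h_{K_\varepsilon}\psi\,du \to \innerProduct{\varphi, \sphericalFourier{1}h_L} > 0$. For sufficiently small $\varepsilon > 0$ this contradicts $\int h_K\psi\,du \leq 0$, and $\voln_{2n}(K) > \voln_{2n}(L)$ follows.
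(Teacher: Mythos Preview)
Your proof is correct, but it takes a harder route than necessary. The obstacle you identify --- that Minkowski's first inequality bounds $\MixedVol{K,L}$ from \emph{below} and therefore seems to go ``the wrong way'' --- disappears if you simply reverse the sign of the perturbation. The paper defines $K$ via $S_K = (s_L - \varepsilon\psi)\,du$ rather than $+\varepsilon\psi$. This yields $\Pi_C K \supseteq \Pi_C L$, and a direct computation using self-adjointness of $\sphericalFourier{1}$ gives
\[
	2n\,\MixedVol{K,L} = \innerProduct{S_K, h_L} = 2n\,\voln_{2n}(L) - \varepsilon\,\innerProduct{\varphi, \sphericalFourier{1}h_L} < 2n\,\voln_{2n}(L),
\]
whence Minkowski's first inequality immediately yields $\voln_{2n}(K) < \voln_{2n}(L)$. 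Swapping the names of $K$ and $L$ then matches the statement verbatim. Your contradiction argument together with the appeal to Hausdorff continuity of the inverse Minkowski map is valid, but it is extra machinery that the paper's sign choice renders unnecessary: with the minus sign the argument becomes a one-line application of~\eqref{eq:Minkineq}, in exact parallel to Proposition~\ref{prop:counterexIDp}.
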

Let us point out that $\convexbodies(\Pi_C)^{\unitsurf{1}}$ consists precisely of all $\unitsurf{1}$-invariant convex bodies with non-empty interior (in particular, does not depend on $C$ and is invariant under taking polars).

\begin{proof}
Let $\widetilde{L}^\circ \in \convexbodies(\Pi_C)^{\unitsurf{1}}$ not in $\cl_w(\Pi_C^\circ(\convexbodies)) = \{\hookrightarrow L_1\}^{\unitsurf{1}}$. By Lemma~\ref{lem:embedApprox}, there exists a smooth $L^\circ \in \convexbodies(\CC^n)$ with strictly positive curvature which does not embed into $L_1$. Moreover, we can assume that $L^\circ$ is $\unitsurf{1}$-invariant and, thus, $L \in \convexbodies(\Pi_{C})$.

As $h_L$ is smooth on $\unitsurf{2n-1}$, so is $\sphericalFourier{1}h_L$. By Proposition~\ref{prop:embedLp} for $L^\circ$, $- \sphericalFourier{1}h_L$ is negative on some $\unitsurf{1}$-invariant open set $U \subset \unitsurf{2n-1}$. Let $0 \neq \varphi \in C^\infty(\unitsurf{2n-1})$ be non-negative and $\unitsurf{1}$-invariant, such that $\varphi$ is supported inside $U$, and set $\psi = \sphericalFourier{1}\varphi$. Since $\sphericalFourier{1}$ intertwines rotations, $\psi$ is $\unitsurf{1}$-invariant.

Next, let $s_L$ denote the smooth density of $S_L$ and define
\begin{align*}
	f = s_L - \varepsilon \psi
\end{align*}
for $\varepsilon > 0$ sufficiently small such that $f$ is still positive. Then $f$ is, in particular, even and positive and by Minkowski's existence theorem there exists $K \in \convexbodies(\Pi_C)^{\unitsurf{1}}$ such that $S_K = f du$. Since $\sphericalFourier{-2n-1}\psi = (2\pi)^{2n} \varphi \geq 0$, and by Proposition~\ref{prop:repIcpPicByFourier},
\begin{align*}
	-4\pi h_{\Pi_C K} = \TT{S_{iC}}\sphericalFourier{-2n-1}S_K = -4\pi h_{\Pi_C L} - \varepsilon \TT{S_{iC}}\sphericalFourier{-2n-1}\psi\\
	= -4\pi h_{\Pi_C L} - \varepsilon S_{iC}(\unitsurf{1}) (2\pi)^{2n} \varphi \leq -4\pi h_{\Pi_C L},
\end{align*}
that is, $\Pi_C K \supseteq \Pi_C L$. As $\varphi$ is positive on an open subset of $U$ and $-\sphericalFourier{1}h_L$ is negative on $U$, we obtain by \eqref{eq:FourierInverse} and the self-adjointness of $\sphericalFourier{1}$
\begin{align*}
	2n \MixedVol{K, L} &= \innerProduct{S_K, h_L} = 2n\voln_{2n}(L) - \varepsilon\innerProduct{(2\pi)^{-2n}\sphericalFourier{-2n-1}\psi, \sphericalFourier{1}h_L} \\
	&= 2n\voln_{2n}(L) + \varepsilon\innerProduct{\varphi, -\sphericalFourier{1}h_L} < 2n\voln_{2n}(L).
\end{align*}
By Minkowski's first inequality~\eqref{eq:Minkineq},
\begin{align*}
	\voln_{2n}(L) > \MixedVol{K, L} \geq \voln_{2n}(K)^{\frac{2n-1}{2n}}\voln_{2n}(L)^{\frac{1}{2n}},
\end{align*}
that is, $\voln_{2n}(L)>\voln_{2n}(K)$, concluding the proof.
\end{proof}

\subsection{\nameref{sec:counterexHidim}}
As in the case of $\IntBody_{C, p}$, by Proposition~\ref{prop:counterex} resp.\ Proposition~\ref{prop:counterexPiD}, it suffices to show that not every $\unitsurf{1}$-invariant convex body is contained in the closure of $\Pi_C^\circ(\convexbodies)$ in order to solve Problem~\ref{prob:BPgeneralPhi}. Again, we will use examples from \cite{Koldobsky2008,Koldobsky2005} of bodies not embedding into $L_1$.
\begin{proof}[Proof of \Cref{prop:counterExHidim} for $\Pi_{C}$] In the proof of \Cref{prop:counterExHidim} for $\IntBody_{C, p}$ it was deduced from \cite{Koldobsky2008, Koldobsky2005} that the unit ball of the complex space $\ell^q_n$, $q>2$, does not embed into $L_1$ for $n \geq 2$. Hence the claim follows by \Cref{prop:counterexPiD}
\end{proof}

\bibliographystyle{abbrv}

\bibliography{ComplLpIntersBodies}

\end{document}